\renewenvironment{proof}[1][Proof]{\noindent\textit{#1. } }{\hfill$\square$}
 \newtheoremstyle{theorem}{6pt}{6pt}{\rm}{}{\sffamily}{ }{ }{}
 \theoremstyle{theorem}
\newtheorem{theorem}{\sc Theorem}[section]
 \newtheoremstyle{algorithm}{6pt}{6pt}{\rm}{}{\sffamily}{ }{ }{}
 \theoremstyle{algorithm}
 \newtheoremstyle{lemma}{6pt}{6pt}{\rm}{}{\sffamily}{ }{ }{}
 \theoremstyle{lemma}
 \newtheorem{lemma}{\sc Lemma}[section]
\newtheoremstyle{case}{6pt}{6pt}{\rm}{}{\sffamily}{. }{ }{}
 \theoremstyle{case}
 \newtheoremstyle{statement}{6pt}{6pt}{\rm}{}{\sffamily}{ }{ }{}
\theoremstyle{statement}
 \newtheoremstyle{corollary}{6pt}{6pt}{\rm}{}{\sffamily}{ }{ }{}
 \theoremstyle{corollary}
 \newtheorem{corollary}{\sc Corollary}[section]
  \newtheoremstyle{definition}{6pt}{6pt}{\rm}{}{\sffamily}{ }{ }{}
 \theoremstyle{definition}
 \newtheorem{definition}{\sc Definition}[section]
\newtheoremstyle{example}{6pt}{6pt}{\rm}{}{\sffamily}{ }{ }{}
\theoremstyle{example}
\newtheorem{example}[theorem]{\sc Example}
\newtheoremstyle{remark}{6pt}{6pt}{\rm}{}{\sffamily}{ }{ }{}
\theoremstyle{remark}
\newtheorem{remark}{\sc Remark}[section]
\newtheoremstyle{approximation}{6pt}{6pt}{\rm}{}{\sffamily}{ }{ }{}
\theoremstyle{approximation}
\newtheoremstyle{scheme}{6pt}{6pt}{\rm}{}{\sffamily}{ }{ }{}
\theoremstyle{scheme}
\newtheoremstyle{Algorithm}{6pt}{6pt}{\rm}{}{\sffamily}{ }{ }{}
\theoremstyle{Algorithm}
\newtheoremstyle{Assumption}{6pt}{6pt}{\rm}{}{\sffamily}{ }{ }{}
\theoremstyle{Assumption}
\newtheoremstyle{proposition}{6pt}{6pt}{\rm}{}{\sffamily}{ }{ }{}
\theoremstyle{proposition}
\newtheorem{proposition}{\sc Proposition}[section]
\newtheoremstyle{hypo}{6pt}{6pt}{\rm}{}{\sffamily}{ }{ }{}
 \theoremstyle{hypo}
  \newtheoremstyle{Step}{6pt}{6pt}{\rm}{}{}{ }{ }{}
 \theoremstyle{Step}
\newcommand{\tick}{\ding{51}}%
\newcommand{\cross}{\ding{55}}%
\definecolor{Gray}{gray}{0.85}
\definecolor{LightCyan}{rgb}{0.88,1,1}
\newcolumntype{a}{>{\columncolor{Gray}}c}
\newcolumntype{b}{>{\columncolor{white}}c}
\numberwithin{equation}{section}
\newif\iftrversion \trversiontrue
\begin{document}

%%%%%%%%%%%%%%%%%
\title{Linear  Relaxations of Polynomial Positivity for Polynomial Lyapunov Function Synthesis.}
\author{ {\sc Mohamed Amin Ben Sassi${}^\dagger$, Sriram Sankaranarayanan${}^\dagger$, Xin Chen${}^*$ and Erika {\'A}brah{\'a}m${}^*$.}\\[2pt]
\begin{tabular}{l}
${}^\dagger$\ Department of Computer Science, University of Colorado, Boulder, CO, USA.\\[2pt]
${}^*$\ Department of Computer Science, RWTH Aachen University, Aachen, Germany.\\
\end{tabular}
}
\pagestyle{headings}
\markboth{Ben Sassi et al.}{\rm Linear Relaxation of Polynomial Positivity}
\maketitle

%%%%%%%%%%%%%%%%%abstract style
%Two grouping braces are necessary in abstract environment
%first argument contains abstract text; second argument contains keywords
%text

\begin{abstract}
{ We examine linear programming (LP) based relaxations for synthesizing polynomial Lyapunov functions to prove the  stability of polynomial ODEs. Our approach starts from a desired parametric polynomial form of the polynomial Lyapunov function. Subsequently, we encode the  positive-definiteness of the function, and the negation of its derivative, over the domain of interest. We first compare two classes of relaxations for encoding polynomial positivity: relaxations by sum-of-squares (SOS) programs, against relaxations based on Handelman representations and Bernstein polynomials, that produce linear programs.
  Next, we present a series of increasingly powerful LP relaxations based on expressing the given polynomial in its Bernstein form, as a linear combination of Bernstein polynomials. Subsequently, we show how these LP relaxations can be used to search for Lyapunov functions for polynomial ODEs by formulating LP instances.  We compare our techniques  with approaches based on SOS on a suite of automatically synthesized benchmarks. 
} {Positive Polynomials, Sum-Of-Squares, Bernstein Polynomials,
  Interval Arithmetic, Handelman Representations, Stability, Lyapunov
  Functions}
\end{abstract}
%%%%%%%%%%%%%%%%%%%%%%%%%%%%%%%

%%%%%%%%%%%%%%section A%%%%%%%%%
\section{Introduction}
%%\input{sectionn1.tex}
%In this paper, we consider the problem of automatically synthesizing
%Lyapunov functions to prove the asymptotic (or Lyapunov) stability of
%a system of differential equations over a compact set $K$ around the equilibrium. 
%By finding a Lyapunov function over $K$ we ensure the existence of a region
%(neighborhood of the equilibrium) contained in $K$ such that the system is stable. 

The problem of discovering stability proofs for closed loop systems in
the form of Lyapunov functions, is an important step in the formal
verification of closed loop control
systems~\cite{Tabuada/09/Verification}. Furthermore, extensions of
Lyapunov functions such as \emph{control Lyapunov functions} can be
used to design controllers, and \emph{input-to-state stability (ISS)
  Lyapunov functions} are used to verify the stability of
inter-connected systems in a component-wise fashion.

In this paper, we focus on the synthesis of polynomial Lyapunov
functions for proving the stability of autonomous  systems
with polynomial dynamics using linear programming (LP) relaxations. At
its core, this requires us to find a positive definite polynomial
whose Lie derivatives are negative definite. Therefore, the problem of
finding a Lyapunov function depends intimately on techniques for
finding positive-definite polynomials over the domain $K$ of
interest. By finding a Lyapunov function over $K$ we ensure the
existence of a region (neighborhood of the equilibrium) contained in
$K$ such that the system is stable.  But proving that a multivariate
polynomial is positive definite over an interval is co-NP hard, and
therefore considered to a be a hard
problem~\cite{Garey+Johnson/1979/Computers}.  Many relaxations to this
problem have been studied, wherein a relaxed procedure can either
conclude that the polynomial is positive definite with certainty, or
fail with no conclusions. We examine two main flavors of relaxation:
\begin{enumerate}
\item The first class of \emph{linear representations} involve the
  expression of the target polynomial to be proven non-negative over
  the set $K$ of interest as a linear combination of polynomials that
  are known to be non-negative over the set $K$. This approaches
  reduces the polynomial positivity problem to a linear program (LP).
\item Alternatively, a different class of approaches uses ``Sum Of
  Squares representations"~\cite{Datta/2002/Computing}. This approach
  yields relaxations based on semi-definite programming
  (SDP)~\cite{Shor/1987/Class,Parillo/2003/Semidefinite,Lasserre/2001/Global}.
\end{enumerate}

As a first contribution of this paper, we extend the so-called
Handelman representations, considered in our previous
work~\cite{Sriram2013}, using the idea of Bernstein polynomials from
approximation
theory~\cite{Bernstein1915,Farouki/2012/Bernstein,Munoz+Narkavicz/2013/Formalization}. Bernstein
polynomials are a special basis of polynomials that have many rich
properties, especially over the unit interval $[0,1]$. For instance, tight
bounds on the values of these polynomials over the unit interval 
are known.  We show three LP relaxations, each more precise than the
previous, that exploit these bounds in the framework of a
\emph{reformulation linearization
  approach}~\cite{sherali91,sherali97}.  Next, we compare Bernstein
relaxations against SOS relaxations, demonstrating polynomials that
can be shown to be positive using one, but not the other.

Finally, the main contribution of the paper consists of adapting Bernstein
relaxations for finding Lyapunov functions over rectangular domain
$K$. The key difference is that, to find a Lyapunov function, we search
for a parametric polynomial $V(\vx,\vc)$ for unknown coefficients
$\vc$, which is positive definite, and whose derivative is negative
definite over the region of interest. A straightforward approach leads
to a bilinear program, that can be dualized  as a multi-parametric
program. We apply the basic requirements for a Lyapunov function, to
cast the multi-parametric program back into a LP, \emph{without any
  loss in precision}.

We have implemented the approach and describe our results on a suite
of examples. We also compare our work with a SOS programming
relaxation using the SOSTOOLS package~\cite{sostools}. On one hand, we
find that LP-based relaxations presented in this paper can find
Lyapunov functions for more benchmark instances while suffering from
fewer numerical issues when compared to a SOS programming
approach. Overall, the LP relaxations are shown to present a promising
approach for synthesizing Lyapunov functions over a bounded rectangle
$K$.

\subsubsection{Organization}

Section~\ref{sec:preliminaries} presents some preliminary notions of
Lyapunov functions, representations of positive polynomials including
Handelman, Schm{\"u}dgen and Putinar representations. We then present
the basic framework for synthesizing Lyapunov functions by formulating
a parametric polynomial that represents the desired
function. Section~\ref{Sec:lp-Bernstein-relaxations} presents the
basic properties of Bernstein polynomials and three LP relaxations for
proving polynomial positivity. In
Section~\ref{Sec:comparisons-relaxations}, we compare first Linear and
SOS relaxations then we compare the proposed Bernstein relaxations
with existing Linear ones.  Next, we describe the synthesis of
Lyapunov functions using Bernstein relaxations in
Section~\ref{Sec:poly-lyap-synth}.  Section~\ref{Sec:numerical-eval}
presents the numerical results.

\begin{framed}
An extended version of this paper including the  benchmark examples used in our evaluation along with the Lyapunov functions found for each is available
through arXiv~\cite{BenSassi+Others/2014/Synthesis}.
\end{framed}

\subsection{Related Work}

In this section, we restrict our discussion to those works that are
closely related to the overall problem of finding Lyapunov functions
for polynomial systems.

Much research has focused on the topic of stability analysis for
polynomial systems, which continues to be a challenging problem.  The
sum-of-squares (SOS) relaxation approach is quite popular, and has
been explored by many
authors~\cite{Papachristodoulou+Prajna/2002/Construction,jarvis2003,Tan+Packard/2006,Topcu+Packard+Seiler+Wheeler/2007/Stability}.
Papachristadoulou and Prajna were among the first to use SOS
relaxations for finding polynomial Lyapunov
functions~\cite{Papachristodoulou+Prajna/2002/Construction}. The core
idea is to express the polynomial and its negative Lie derivative as
sum-of-square polynomials for global stability analysis, or use a
suitable representation such as Putinar representation for finding
Lyapunov functions over a bounded region.  Their approach is
implemented in the SOSTOOLS package~\cite{sostools}. Extensions have
addressed the problem of controller synthesis~\cite{jarvis2003},
finding region of stability~\cite{Tan+Packard/2006}; and using a
combination of numerical simulations with SOS programming to estimate
the region of
stability~\cite{Topcu+Packard+Seiler+Wheeler/2007/Stability}.  A
related set of approaches directly relax the positivity of the
Lyapunov form and the negativity of its derivative using Linear Matrix
Inequalities
(LMIs)~\cite{Tibken/2000/Estimation,Chesi+Vicino/2005,Chesi/2007,Chesi/2009,
  Henrion+Lasserre/2006}.  Algebraic methods based, for example, on
Gr{\"o}bner basis~\cite{Forsman91constructionof}, or on constructive
semi-algebraic systems techniques have been
explored~\cite{She2009,She2013}.

While the approach in this paper focuses on polynomial system
stability using polynomial Lyapunov functions, the general problem of
analyzing nonlinear systems with rational, trigonometric and other
nonlinear terms has received lesser attention. Significantly,
Papachristadoulou et al.  present SOS relaxations for the stability of
non-polynomial systems through a process of \emph{algebraization} that
augments the original ODE with more state variables to create an
equivalent system involving rational
functions~\cite{Papachristodoulou+Prajna/2005/Analysis}.  Work by Chesi
addresses the use of LMI relaxations for the stability analysis of a
class of genetic regulatory networks involving ODEs with rational
functions on the right-hand sides~\cite{Chesi/2009}.

Conversely, polynomial systems often require non-polynomial Lyapunov functions.
Ahmadi et al. present an example of a polynomial system
that is globally stable but does not admit a polynomial Lyapunov
function~\cite{Parillo2011}. Some previous research, including
Papachristadoulou et
al. \emph{ibid.}~\cite{Papachristodoulou+Prajna/2005/Analysis}, has
focused on the generation of non-polynomial Lyapunov
functions. Recent work by Goubault et al. presents techniques for
finding rational, trigonometric and exponential Lyapunov functions for
polynomial systems through ideas from formal
integration~\cite{Goubault+Others/2014/Finding}. Their approach also
reduces to polynomial optimization problems, providing a future avenue
for the application of the linear relaxations developed here.

Recently, Ahmadi et al. have proposed different set of linear
relaxations for polynomial optimization problems called the DSOS
approach.  This approach further relaxes the positive-semidefiniteness
conditions in the SDP formulation using the condition of diagonal
dominance, that yields linear programming
relaxations~\cite{Ahmadi+Majumdar/2014/DSOS}. This idea has been also
been extended to synthesize polynomial Lyapunov
functions~\cite{Majumdar+Ahmadi+Tedrake/2014/Control}. A detailed
comparison of Ahmadi et al.'s ideas with those in this paper will be
carried out as part of our future work.

However, the use of LP relaxation has not received as much
attention. Johansen presented an approach based on
linear and quadratic programming~\cite{Johansen2000}.  This approach needs a so called
\emph{linear parametrization form} to reduce the stability conditions
to an infinite number of linear inequalities, which are reduced to a
finite number by discretizing the state space. As a consequence, the
number of linear inequalities characterizing the Lyapunov functions
grows exponentially with both the dimension of the state space and the
required accuracy.  Another approach using linear programming was
presented by Hafstein~\cite{Hafstein2002,Hafstein2014}. This approach
searches for a piecewise affine Lyapunov function, and requires a
triangulation of the state space.  Our approach derives polynomial (as
opposed to affine) Lyapunov function but also benefits from a
sub-division of the state-space to increase accuracy. The use of
Bernstein polynomial properties to formulate relaxations is a
distinguishing feature of our approach.  The recent work of Kamyar and
Peet, which remains under submission at the time of writing, also
examines linear relaxations for polynomial optimization problems using
Handelman representations, Bernstein polynomial representations (which
are closely related), and a linear relaxation based on the well-known
Polya's theorem for characterizing positive polynomials on a
simplex~\cite{Kamyar+Peet/2014/Polynomial}. As in this paper, they
have used their approach to search for Lyapunov functions by
decomposing the state space.  A key difference between the two papers
lies in our use of reformulation linearization that considers
nontrivial linear relationships between Bernstein polynomials. As
shown through examples in this paper, these relationships strictly
increase the set of polynomials that can be proven non-negative
through our linear relaxations. It must be mentioned that Kamyar et
al.  consider more applications including searching for piecewise
polynomial Lyapunov functions and the robust $H_{\infty}$ control of
systems. Our future work will consider the application of the LP
relaxations to those considered in Kamyar et al, facilitating an
experimental comparison. Ratschan and She use interval arithmetic
relaxations with branch-and-bound to discover Lyapunov like functions
to prove a notion of region stability of polynomial
systems~\cite{Ratschan+She/2010/Providing}.  This is extended in our
previous work to find LP relaxations using the notion of Handelman
representations~\cite{Sriram2013}. In practice, the interval
arithmetic approach is known to be quite coarse for proving polynomial
positivity, especially for intervals that contain $0$. Therefore,
Ratschan and She restrict themselves to region stability by excluding
a small interval containing the equilibrium from their region of
interest. Furthermore, the coarseness of interval relaxation is
remedied by resorting to branch-and-bound over the domain. A detailed
comparison between interval and Handelman approach is provided in our
previous work~\cite{Sriram2013}, wherein we conclude that both
approaches have complementary strengths. A combined approach is thus
formulated.

In this paper, we start from such a combined approach and
generalize it further through Bernstein polynomials. We use
non-trivial properties of Bernstein polynomials that cannot be proven
through interval analysis or Handelman representations, to further
improve the quality of these relaxations.
Section~\ref{Sec:comparisons-relaxations} provides detailed
comparisons between the various approaches presented in this paper
with the approaches based on interval arithmetic, Handelman
representations and SOS programming relaxations.

\section{Preliminaries}\label{sec:preliminaries}
In this section, we recall the definition of Lyapunov functions and
discuss procedures for synthesizing them. Subsequently, we examine two
techniques for proving the positivity of polynomials:
so-called \emph{Handelman representation} technique that produces
linear programming (LP) relaxations and a \emph{Putinar representation}
technique that produces semi-definite programming (SDP) relaxations.
We extend these to recall algorithmic schemes for synthesizing
Lyapunov functions, wherein we treat constraints that arise from the
positivity of polynomials parameterized by unknown coefficients.

\begin{definition}[Positive Semi-Definite Functions]
A function $f: \reals^n \rightarrow \reals$ is \emph{positive
  semi-definite} over a domain $U \subseteq \reals^n$ iff 
\[ (\forall\ \vx \in U)\ f(\vx) \geq 0 \,.\]
 Furthermore, $f$ is \emph{positive definite} iff $f$ is positive
 semi-definite, and additionally, (a) $f(\vx) > 0$ for all $\vx \in U
\setminus \{ 0 \}$, and (b) $f(0) = 0$.
\end{definition}

\subsection{Lyapunov Functions}
We now recall the key concepts of stability and Lyapunov functions.
Let $\scr{S}$ be a continuous system over a state-space
$\scr{X} \subseteq \reals^n$ specified by a system of ODEs
\[ \frac{d\vx}{dt} = f(\vx),\ \vx \in \scr{X} \,.\] 

We assume that the right-hand side function $f(\vx)$ is Lipschitz
 continuous over $\vx$.  An equilibrium of the system
 $\vx^* \in \scr{X}$ satisfies $f(\vx^*) = 0$.

\begin{definition}[Lyapunov and Asymptotic Stability]
A system is \emph{Lyapunov stable} over an open region $U$ around the
equilibrium $\vx^*$, if for every neighborhood $N \subseteq U$ of
$\vx^*$ there is a neighborhood $M \subset N$ such that $ (\forall\
\vx(0) \in M)\ (\forall\ t \geq 0)\ \vx(t) \in N$.  A system is
\emph{asymptotically} stable if it is Lyapunov stable and all
trajectories starting from $U$ approach $\vx^*$ as $t \rightarrow \infty$. 
\end{definition}
Lyapunov functions are useful in proving that a system is
stable in a region around the equilibrium.  Without loss of
generality, we assume that $\vx^* = \vec{0}$. The definitions below
are based on the terminology used by Meiss (\cite{Meiss/2007/Differential}).
\begin{definition}\label{Def:lyapunov-function}
  A continuous and differentiable function $V(\vx)$ is a \emph{weak
    Lyapunov function} over a region $U \subseteq \scr{X}$ iff the
  following conditions hold:
\begin{enumerate}
\item $V(\vx)$ is positive definite over $U$, i.e, $V(\vx) > 0 $ for
  all $\vx \in U \setminus \{ \vec{0} \}$ and $V(\vec{0}) = 0$.
\item $\frac{dV}{dt} = (\grad V \cdot f) \leq 0$ for all $\vx \in U$.
\end{enumerate}
Additionally, $V$ is a \emph{strong Lyapunov function} if
$ \left(- \frac{dV}{dt}\right)$ is positive definite.
\end{definition}
Weak Lyapunov functions are used to prove that a system is Lyapunov
stable over a  subset of region $U$, whereas a strong Lyapunov
function proves asymptotic stability. The approaches presented in this
paper can be used to search for weak as well as strong Lyapunov
functions.

Stability is an important property of control systems. Techniques for
discovering Lyapunov functions to certify the stability of a closed
loop model are quite useful in control systems design.

\subsection{Proving Polynomial Positivity}\label{Sec:positivity-representations}

At the heart of Lyapunov function synthesis, we face the challenge of
establishing that a given function $V(\vx)$ is positive (negative)
definite over $U$.  The problem of deciding whether a given polynomial
$V(\vx)$ is positive definite is
NP-hard~\cite{Garey+Johnson/1979/Computers}.  A precise solution
requires a decision procedure over the theory of reals.
~\cite{Tarski/51/Decision,Collins/75/Quantifier}.  To wit, we check
the validity of the formula: $ (\forall\ \vx
\in U)\ V(\vx) \geq 0 $ using tools such as QEPCAD
~\cite{Collins+Hong/91/Partial} and REDLOG
~\cite{Dolzmann+Sturm/97/REDLOG}.  This process is \emph{exact}, but
intractable for all but the smallest of systems and low degree
polynomials for $V$.  Therefore, we seek stricter versions of positive
semi-definiteness that yield a more tractable system of constraints.

We examine relaxations to the problem of establishing that a given
polynomial is positive semi-definite over a region
$K \subseteq \reals^n$.  In the literature, we can distinguish two
kind of techniques for establishing that a given polynomial is
positive semi-definite~\cite{Powers2000}.  Here, we call them 
\emph{linear representations} and \emph{sum of square (SOS) representations}.

\subsubsection{Linear Representations} 
The first approach writes the given polynomial $p$ as a conic
combination of products of the constraints defining $K$. This idea was
first examined by Bernstein for proving the positivity of univariate
polynomials over the unit interval $[0,1]$~\cite{Bernstein1915}.
Furthermore, Hausdorff~\cite{Hausdorff1921} extended it to the interval
$[-1,1]$.
\begin{theorem}(Bernstein and Hausdorff).
\label{theo:bern}
A polynomial $p(x)$ is strictly positive over $[-1,1]$ iff there exists a degree $d > 0$
and exists non-negative constants $\lambda_{0},\ldots, \lambda_d \geq 0$, such that 
\begin{equation}
\label{eq:bernhaus}
 p(x) \equiv \displaystyle{\sum_{i=0}^d \lambda_{i}(1-x)^i(1+x)^{d-i}},%\quad  c_{i,j}\ge 0
\end{equation}

\end{theorem}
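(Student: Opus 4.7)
The plan is to prove both directions after a linear change of variables that identifies the theorem with a classical Bernstein representation result on $[0,1]$. Substituting $y = (1+x)/2$ gives $1-x = 2(1-y)$ and $1+x = 2y$, so the claimed representation is, up to absorbing the factor $2^d$ and the binomial coefficients $\binom{d}{i}$ into the $\lambda_i$, equivalent to asking that $q(y) := p(2y-1)$ admit a degree-$d$ Bernstein expansion with nonnegative coefficients. Under this reduction, strict positivity on $[-1,1]$ corresponds to strict positivity on $[0,1]$, and the polynomials $(1-y)^i y^{d-i}$ become (up to a binomial normalization) the standard Bernstein basis.

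For the ``if'' direction I would simply observe that on $[-1,1]$ each monomial $(1-x)^i(1+x)^{d-i}$ is nonnegative since $1 \pm x \geq 0$ there, so any conic combination is $\geq 0$ on $[-1,1]$; strict positivity on the interior follows because every basis element is strictly positive at interior points. Strict positivity at the endpoints $x = \pm 1$ additionally forces $\lambda_0,\lambda_d > 0$, which can always be arranged whenever $p$ itself is strictly positive at $\pm 1$.

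For the ``only if'' direction, which is the substantive content, I would exploit the fundamental property of the Bernstein basis that \emph{degree elevation} causes the control coefficients $c_k^{(d)}$ to approach $q$ uniformly on the grid $k/d$, namely
\[
  \max_{0 \leq k \leq d}\Bigl|\,c_k^{(d)} - q(k/d)\,\Bigr| \longrightarrow 0 \quad \text{as } d \to \infty .
\]
This approximation lemma is the standard ingredient behind Bernstein's constructive proof of the Weierstrass theorem; I would establish it by writing each $c_k^{(d)}$, for $d$ larger than $\deg q$, as an explicit convex combination of the values $q(j/n)$ (obtained from repeated degree elevation of the degree-$n$ Bernstein form of $q$) whose weights concentrate around $j/n \approx k/d$. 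Since $q$ is continuous and strictly positive on the compact interval $[0,1]$, it attains a minimum $\delta > 0$, so for $d$ large enough every coefficient satisfies $c_k^{(d)} > \delta/2 > 0$, yielding the required representation. Undoing the change of variables produces the nonnegative $\lambda_i$ demanded by the statement.

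The principal obstacle is establishing the degree-elevation approximation lemma; everything else reduces to routine substitution and bookkeeping. An alternative route, more in the spirit of this paper's Handelman-style representations, would invoke the Krivine--Handelman Positivstellensatz applied to the polytope $[-1,1] = \{1-x \geq 0,\ 1+x \geq 0\}$, which produces the representation directly; however, the Bernstein approach has the advantage of being elementary and constructive, and yields explicit bounds on the required degree $d$ in terms of $\|p\|_\infty$ and $\delta$.
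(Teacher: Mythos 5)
The paper gives no proof of this theorem; it is cited as classical (Bernstein for $[0,1]$, Hausdorff for $[-1,1]$) and the text immediately passes to the multivariate Handelman generalization, so your proposal has to be judged on its own terms. Your ``if'' direction is fine, including the observation that strict positivity at $x=\pm 1$ forces $\lambda_0,\lambda_d>0$, and your overall plan for ``only if'' --- reduce to $[0,1]$ by the affine map, then show that for $d$ large enough every degree-$d$ Bernstein coefficient of a strictly positive polynomial becomes positive, via the estimate $\max_k|c_k^{(d)} - q(k/d)| \to 0$ --- is the standard constructive argument; the needed estimate is the same Lin--Rokne bound that the paper itself invokes in the ``Higher relaxation degree'' subsection.

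There is, however, a genuine gap in how you propose to establish that key lemma. Repeated degree elevation from degree $n$ to degree $d$ expresses $c_k^{(d)}$ as a convex combination (with hypergeometric weights) of the exact degree-$n$ Bernstein \emph{coefficients} $c_j^{(n)}$, not of the grid \emph{values} $q(j/n)$, and in general $c_j^{(n)}\ne q(j/n)$; for example, with $q(y)=y^2$ and $n=2$ the Bernstein coefficients are $(0,0,1)$ while the grid values are $(0,\tfrac14,1)$. So even though the elevation weights do concentrate near $j/n\approx k/d$, the limit your argument would deliver is $c_{j^*}^{(n)}$ with $j^*/n\approx k/d$, not $q(k/d)$, and the lemma does not close. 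The fix is to argue directly from Eq.~\eqref{eq:bernstein-coeff}: writing $q(y)=\sum_{j\le n} a_j y^j$, one has $c_k^{(d)}=\sum_{j\le n}a_j\,\binom{k}{j}\big/\binom{d}{j}$, and a short calculation shows $\binom{k}{j}\big/\binom{d}{j}=(k/d)^j+O(1/d)$ uniformly over $0\le k\le d$ for each fixed $j\le n$; summing over the finitely many $j$ gives the required uniform $O(1/d)$ bound. Your alternative route via the Handelman Positivstellensatz is logically sound but proves the univariate special case by citing its own multivariate generalization (the paper's Theorem~\ref{theo:handel}), which somewhat defeats the purpose of stating the univariate result first.
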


This approach is generalized to  multivariate polynomials
over $\vx:(x_1,\ldots,x_n)$ and general semi-algebraic sets $K \subseteq \reals^n$
rather than the unit interval.  Let $K$ be defined as a semi-algebraic set:
\[ K: (p_1(\vx) \geq 0\ \land\ \cdots \ \land\ p_m(\vx) \geq 0) \]
for multivariate polynomials $p_1,\ldots,p_m$.  A power-product over
the set of polynomials $P:\ \{ p_1, \ldots,p_m\}$ is a polynomial of
the form $f:\ p_1^{n_1}p_2^{n_2} \cdots p_m^{n_m}$. The degree of the
power-product is given by $(n_1,\ldots,n_m)$. We say that
$(n_1,\ldots,n_m) \leq D$ iff $n_j \leq D$ for each $j \in [1,m]$.
Let $\mathsc{pp}(P,D)$ represent all power products from the set $P$
bounded by degree $D$.

\begin{theorem}[Conic Combination of Power Products]\label{Theorem:conic-combination-of-power-products}
If a polynomial $p$ can be written as a conic combination of power-products of 
$P:\ \{p_1,\ldots,p_m\}$, i.e,
\begin{equation}\label{Eq:handelman-rep}
 p(\vx)\ \equiv\ \sum_{f \in \mathsc{pp}(P,D)}\ \lambda_f f,\ \; \mbox{s.t.}\ (\forall\ f \in \mathsc{pp}(P,D))\ \lambda_f \geq 0 \,,
\end{equation}
then the polynomial $p$ is non-negative over
$K$:
\[ (\forall\ \vx \in \reals^n)\ \vx \in K\ \Rightarrow\ p(\vx) \geq 0\,. \]
\end{theorem}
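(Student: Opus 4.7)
The statement is the easy (sufficiency) direction of a Handelman-type positivstellensatz: the converse (completeness on compact $K$) would require the full Handelman theorem, but here we only need to verify that the stated algebraic certificate indeed implies non-negativity. So my plan is to proceed by direct pointwise evaluation, without invoking any deeper result.

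First, I would fix an arbitrary $\vx \in K$ and recall that, by the defining inequalities of $K$, we have $p_i(\vx) \geq 0$ for every $i \in \{1,\ldots,m\}$. Next, for each power product $f = p_1^{n_1} p_2^{n_2} \cdots p_m^{n_m} \in \mathsc{pp}(P,D)$, the value $f(\vx) = p_1(\vx)^{n_1} \cdots p_m(\vx)^{n_m}$ is a product of non-negative real numbers, hence $f(\vx) \geq 0$. Combined with the hypothesis $\lambda_f \geq 0$, each summand $\lambda_f f(\vx)$ is non-negative. Summing over the finite index set $\mathsc{pp}(P,D)$ and using the identity (\ref{Eq:handelman-rep}) at the point $\vx$ gives
\begin{equation*}
p(\vx) \;=\; \sum_{f \in \mathsc{pp}(P,D)} \lambda_f\, f(\vx) \;\geq\; 0 .
\end{equation*}
Since $\vx \in K$ was arbitrary, this establishes the implication $\vx \in K \Rightarrow p(\vx) \geq 0$.

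There is effectively no hard step: the proof is a one-line observation that conic combinations of products of non-negative quantities are non-negative. The only thing worth being careful about is the logical distinction between the polynomial identity $p \equiv \sum_f \lambda_f f$ (which is an identity in the ring $\mathbb{R}[\vx]$, valid at every point of $\mathbb{R}^n$) and the pointwise inequality being derived (which only needs to hold on $K$). I would therefore make explicit that the identity is applied \emph{after} restricting to $\vx \in K$, so that the non-negativity of each factor $p_i(\vx)$ can be invoked. No compactness, no degree bound beyond what is already in the hypothesis, and no properties of $\mathsc{pp}(P,D)$ other than finiteness are needed.
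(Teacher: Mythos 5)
Your proof is correct and is exactly the straightforward pointwise argument the paper has in mind; the paper simply remarks that ``the proof is quite simple'' and omits the details. Your care in noting that the ring identity $p \equiv \sum_f \lambda_f f$ is evaluated at a point of $K$ before invoking $p_i(\vx)\geq 0$ is a nice touch, but there is no substantive difference in approach.
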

The proof is quite simple. The conic combination of power-products in
$\mathsc{pp}(P,D)$ as shown in Eq.~\eqref{Eq:handelman-rep}, is said
to be a \emph{Handelman representation} for a polynomial $p$~\cite{Datta/2002/Computing}.
However, the converse of
Theorem~\ref{Theorem:conic-combination-of-power-products} does not
hold, in general. Therefore, polynomials that are positive
semi-definite over $K$ need not necessarily have a Handelman
representation.
\begin{example}
Consider the first orthant in $\reals^2$ given by $K_1:\ (x_1 \geq
0\ \land\ x_2 \geq 0 )$ and the polynomial $p: x_1^2 - 2 x_1 x_2 +
x_2^2$. It is easily seen that $p$ cannot be written as a conic
combination of power products over $x_1, x_2$, no matter what the
degree limit $D$ is chosen to be.

\end{example}
An important question is when the converse of
Theorem~\ref{Theorem:conic-combination-of-power-products} holds.  One
important case for a compact, polyhedron $K$ defined as
$K:\displaystyle{\bigwedge\limits_{j=1}^m \underset{f_j}{\underbrace{(\va_j \vx- \vec{b_j})}}\ge 0}$ is given by Handelman~\cite{Handelman/1988/Representing}.
 Let $P$ denote the set  $\{ f_1,\ldots,f_m\}$, and $\mathsc{pp}(P,D)$ denote the power products of degree up to $D$, as before.
\begin{theorem}[Handelman]
\label{theo:handel}
If $p$ is strictly positive over a compact polyhedron $K$ then there
exists a degree bound $D > 0$ such that
\begin{equation}
\label{eq:handel}
 p \equiv \displaystyle{\sum_{f \in \mathsc{pp}(P,D)}\ \lambda_{f} f}, \text{ for }\lambda_{f}\ge 0 \,.
\end{equation}
\end{theorem}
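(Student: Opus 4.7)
The plan is to argue by contradiction using a Hahn--Banach separation in the polynomial ring, followed by a moment-type representation of the separating functional as integration against a measure on $K$. Let $\mathcal{C}$ denote the convex cone in $\reals[\vx]$ consisting of all polynomials admitting a Handelman representation, i.e., finite non-negative combinations of power products $f^\alpha = f_1^{\alpha_1}\cdots f_m^{\alpha_m}$ with $\alpha \in \mathbb{Z}_{\ge 0}^m$. Every element of $\mathcal{C}$ is manifestly non-negative on $K$, so if one can show $p \in \mathcal{C}$, the representation~\eqref{eq:handel} follows.

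Suppose for contradiction that $p \notin \mathcal{C}$. Working in the finite-dimensional subspace $\reals[\vx]_{\le d}$ of polynomials of degree at most $d := \deg p$ and truncating $\mathcal{C}$ to power products of bounded degree, one can apply Hahn--Banach to obtain a linear functional $L:\reals[\vx]\to\reals$ with $L(1)=1$, $L(q)\ge 0$ for every $q \in \mathcal{C}$, and $L(p)\le 0$. The crux is to recognize that $L$ is represented by a non-negative Borel probability measure $\mu$ supported on $K$. Compactness of $K$ is essential here: because the polyhedron is bounded, for every coordinate $x_i$ there exists $N_i$ with $N_i \pm x_i \ge 0$ on $K$, and after augmenting $\{f_j\}$ by these redundant linear inequalities, the cone $\mathcal{C}$ contains $N \pm q$ for every polynomial $q$ (for sufficiently large $N$ depending on $q$). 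This is the \emph{Archimedean} property of the Handelman preordering, and it permits an application of Haviland's theorem (equivalently, a continuous extension of $L$ plus the Riesz representation theorem) to conclude $L(q) = \int_K q\,d\mu$ for all $q \in \reals[\vx]$. Then $L(p) = \int_K p\,d\mu > 0$, since $p$ is strictly positive on $K$ and $\mu(K) = L(1) = 1$, contradicting $L(p)\le 0$.

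The main obstacle is precisely this moment-representation step: verifying that Archimedean-ness of the preordering generated by a compact family of linear inequalities is enough to identify a non-negative functional on $\mathcal{C}$ with a measure on $K$. This requires some care, because the Handelman cone $\mathcal{C}$ contains only special polynomials (products of the $f_j$'s), not every polynomial non-negative on $K$, so a naive invocation of Haviland does not directly apply --- one must first use the Archimedean property to extend $L$ to a bounded linear functional on the space of continuous functions on $K$ and then invoke Riesz. Once the contradiction is in place, an explicit degree bound $D$ is extracted by noting that $p$ lies in the finite-dimensional slice $\reals[\vx]_{\le d}$ and the truncation $\mathcal{C} \cap \reals[\vx]_{\le d}$ is generated by the finitely many power products of total degree at most some function of $d$ and the Archimedean constant, yielding the (non-constructive) bound $D$ claimed in the theorem.
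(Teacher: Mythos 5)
The paper does not prove Theorem~\ref{theo:handel}; it only states it, citing Handelman~\cite{Handelman/1988/Representing}, so there is no in-paper argument to compare against. Your separation-plus-moment strategy is a recognized modern route to Archimedean Positivstellens{\"a}tze of this type, but two of the steps as you describe them contain real gaps. The Hahn--Banach step: separating $p$ from the truncated cone inside the finite-dimensional slice $\reals[\vx]_{\le d}$ produces a functional only on that slice, and passing from it to an $L$ defined on all of $\reals[\vx]$, non-negative on all of $\mathcal{C}$ and with $L(1)=1$, is not automatic. The clean route runs the other way: use the Archimedean property of $\mathcal{C}$ (which does follow from compactness of $K$, and in fact for the original defining inequalities, without your augmentation, by Farkas' lemma) to show that $1$ is an algebraic interior point of $\mathcal{C}$, and then apply an Eidelheit/Mazur-type separation directly on $\reals[\vx]$.

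The measure-representation step is the more serious problem. ``Extend $L$ continuously to $C(K)$ and invoke Riesz'' requires a bound of the form $|L(q)| \le C\sup_K|q|$, i.e., that the least $N$ with $N\pm q\in\mathcal{C}$ grows like $\sup_K|q|$ --- but that statement is essentially Handelman's theorem itself, so the argument as written is circular. What the Archimedean property does give, without circularity, is that each coordinate $L(\vx^\alpha)$ is bounded, so the state space $\{L : L\ge 0\ \text{on}\ \mathcal{C},\ L(1)=1\}$ is weak-$*$ compact; the standard proofs then show that extreme points of this compact convex set are ring homomorphisms $\reals[\vx]\to\reals$ (hence evaluations at points of $K$) and invoke Krein--Milman, or equivalently pass to the Gelfand spectrum of the completion of $\reals[\vx]$ under the order-unit seminorm. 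Either variant replaces the Haviland/Riesz step you propose. One further, minor, point: once $p\in\mathcal{C}$ is established, the degree bound $D$ is immediate simply because every element of $\mathcal{C}$ is by definition a \emph{finite} conic combination of power products; $D$ is not controlled by $\deg p$ together with an ``Archimedean constant'' (it may be far larger than $\deg p$ because high-degree power products can cancel), which is precisely why the bound is non-constructive.
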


\begin{example}\label{Ex:handelman-1}
Consider a polynomial $p(x_1,x_2) = -2 x_1^3 + 6 x_1^2x_2 + 7x_1^2 - 6 x_1 x_2^2 - 14 x_1 x_2 + 2 x_2^3 + 7 x_2^2 - 9$
over the set $K: ( \underset{f_1}{\underbrace{x_1 - x_2 -3}} \geq 0\ \land\ \underset{f_2}{\underbrace{x_2 - x_1 -1 }} \geq 0 )$. We can establish
the positivity of $p$ over $K$ through its Handelman representation:
\[ p \equiv 2 f_1^2 f_2 + 3 f_1 f_2 \]
\end{example}

The problem of checking if a polynomial $p$ is positive semi-definite over a set $K: \bigwedge\limits_{j=1}^m p_j(\vx) \geq 0$ is therefore tacked as follows:
\begin{enumerate}
\item Choose a degree limit $D$ and construct all terms in $\mathsc{pp}(P,D)$, where $P = \{ p_1, \ldots,p_m\}$ are the polynomials defining $K$.
\item Express $p \equiv \sum\limits_{f \in \mathsc{pp}(P,D)}\ \lambda_f f $ for \emph{unknown multipliers} $\lambda_f \geq 0$.
\item Equate coefficients on both sides (the given polynomial and the Handelman representation) to obtain a set of linear inequality constraints involving $\lambda_f$. 
\item Use a Linear Programming (LP) solver to solve these constraints. If feasible, the result yields a proof that $p$ is positive semi-definite over $K$.
\end{enumerate}

We note that the procedure fails if $p$ is not positive-definite over
 $K$, or $p$ does not have a Handelman representation over
 $K$. Nevertheless, it provides an useful LP relaxation for polynomial
 positivity.

%%\subsubsection{ Representations}
\subsubsection{Sum-Of-Squares representations}
Another important approach to proving positivity is through the
well-known sum-of-squares (SOS) decomposition.

\begin{definition}
A polynomial $p(\vx)$ is a sum-of-squares (SOS) iff there exists
polynomials $p_1,\ldots,p_k$ over $\vx$ such that $p$ can be written
as
\[ p \equiv p_1^2 + \ldots + p_k^2 \]
\end{definition}

It is easy to show that any SOS polynomial is positive semi-definite
over $\reals^n$. On the other hand, not every positive semi-definite
polynomial is SOS (the so-called Motzkin polynomial provides a
counter-example)~\cite{Motzkin}.
%~\cite{Parillo/2003/Semidefinite}.  

\paragraph{Schm{\"u}dgen Representation: }\label{sec:schmudgen-represenatation}
Whereas SOS polynomials are positive semidefinite over $\reals^n$, we
often seek if $p$ is positive semi-definite over a semi-algebraic set
$K: \ ( p_1 \geq 0\ \land\ \cdots \ \land\ p_m \geq 0)$.

We define the \emph{pre-order} generated by a set
$P=\{p_1,\ldots,p_m\}$ of polynomials as the set 
\[ R(P)=\{ p_1^{e_1}p_2^{e_2}\cdots p_m^{e_m}\ |\ (e_1,\ldots,e_m) \in \{
0,1\}^m \} \,.\]
It is easy to see that if for some given $\vx$, $ p_i(\vx) \geq 0$ for all $i \in [1,m]$, then
for each $r \in R(P)$, we have $r(\vx) \geq 0$. In fact, the following result follows
easily:

\begin{theorem}\label{Theorem:schmudgen-represenation-trivial}
If a polynomial $p$ can be expressed as \emph{SOS polynomial combination} of elements in $R(P)$, 
\begin{equation}\label{Eq:schmudgen-representation}
p \equiv \sum_{r \in R(P)} q_r r \,\ \mbox{for SOS polynomials}\ q_r \,,
\end{equation}
then $p$ is positive semi-definite over $K$.
\end{theorem}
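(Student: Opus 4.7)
The plan is to verify the conclusion pointwise on $K$: fix an arbitrary $\vx \in K$ and show that every summand in the claimed decomposition evaluates to a non-negative real number, whence $p(\vx)\ge 0$ by finite summation. There is no subtlety in the argument; the statement is essentially a bookkeeping consequence of the definitions of $R(P)$, SOS polynomials, and the semi-algebraic set $K$.

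First, I would unpack what membership in $K$ gives us. By definition of $K$, the hypothesis $\vx \in K$ means $p_i(\vx)\ge 0$ for every $i\in[1,m]$. Next I would handle the factors coming from $R(P)$: for an arbitrary $r = p_1^{e_1}\cdots p_m^{e_m}\in R(P)$ with exponents $e_i\in\{0,1\}$, the value $r(\vx) = \prod_{i=1}^{m} p_i(\vx)^{e_i}$ is a product of non-negative reals (using the convention $0^0 = 1$ when $e_i=0$ and $p_i(\vx)=0$, which is harmless since those factors contribute $1$), hence $r(\vx)\ge 0$. This step is the remark made immediately preceding the statement, so it can simply be invoked.

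Then I would dispose of the SOS multipliers: by definition of an SOS polynomial, for each $r\in R(P)$ there exist polynomials $q_{r,1},\ldots,q_{r,k_r}$ with
\begin{equation*}
q_r(\vx) \;=\; \sum_{j=1}^{k_r} q_{r,j}(\vx)^2 \;\ge\; 0,
\end{equation*}
since each squared real is non-negative. Combining the two non-negativity facts gives $q_r(\vx)\, r(\vx) \ge 0$ for every $r\in R(P)$.

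Finally, summing over the finite index set $R(P)$ and using the assumed identity $p \equiv \sum_{r\in R(P)} q_r\, r$, I conclude
\begin{equation*}
p(\vx) \;=\; \sum_{r \in R(P)} q_r(\vx)\, r(\vx) \;\ge\; 0.
\end{equation*}
Since $\vx\in K$ was arbitrary, $p$ is positive semi-definite on $K$. There is no real obstacle here: the only thing worth flagging explicitly is that the decomposition is a polynomial identity, so once it is evaluated at a specific point it becomes an identity in $\mathbb{R}$, licensing the pointwise manipulations above.
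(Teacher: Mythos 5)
Your argument is correct and is exactly the argument the paper has in mind: the paper states the theorem as following "easily" from the preceding remark that $r(\vx)\ge 0$ for $\vx\in K$, and your pointwise reasoning (each $r(\vx)\ge 0$, each SOS multiplier $q_r(\vx)\ge 0$, hence the finite sum is $\ge 0$) is precisely that easy verification spelled out. No gap; same approach.
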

Decomposing a polynomial $p$ according to
~\cref{Eq:schmudgen-representation} will be called
the \emph{Schm{\"u}dgen representation} of $p$. The terminology is
inspired by the following result due to
Schm{\"u}dgen~\cite{Schmudgen1991}:

\begin{theorem}[Schm{\"u}dgen Positivstellensatz]
If $K$ is compact then every polynomial $p(\vx)$ that is strictly
positive over $K$ has a Schm{\"u}dgen representation of the form given
in ~\cref{Eq:schmudgen-representation}.
\end{theorem}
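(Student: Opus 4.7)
The plan is to prove Schm{\"u}dgen's theorem by reducing it to the more classical (and more general) Stengle Positivstellensatz, and then using the compactness of $K$ to clear denominators. Stengle's theorem, applied to a polynomial $p$ that is strictly positive on $K$, furnishes (without any compactness hypothesis) polynomials $\sigma$ and $\tau$, each of which admits a representation of the form in~\cref{Eq:schmudgen-representation}, together with a non-negative integer $k$, such that
\begin{equation*}
\sigma\, p \;\equiv\; p^{2k} \;+\; \tau.
\end{equation*}
The right-hand side is already in the Schm{\"u}dgen form we want, but the multiplier $\sigma$ on the left stands in the way: one cannot simply ``divide through'' inside the polynomial ring.

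The device that removes $\sigma$ is driven by the compactness of $K$. Since $K$ is compact, every polynomial is bounded on it; in particular there exists $N > 0$ with $N - \sum_i x_i^2 > 0$ on $K$, and analogous bounds hold for $\sigma$ and for $p$ itself. The crucial lemma, typically called the \emph{archimedean property}, asserts that $N - \sum_i x_i^2$ itself admits a Schm{\"u}dgen representation. Once this is in hand, boundedness of an arbitrary polynomial $q$ on $K$ can be promoted to an \emph{algebraic} bound ``$M - q$ has a Schm{\"u}dgen representation,'' by an induction on the degree of $q$ that repeatedly substitutes the archimedean inequality into the monomials of $q$. Applying this upgrade to $\sigma$ produces a Schm{\"u}dgen representation of $M - \sigma$. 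A geometric-series-style rewriting of the Stengle identity above, combined with $p > 0$ and the bounds on $\sigma$ and $p$, then expresses $p$ as a Schm{\"u}dgen combination of elements of $R(P)$ with SOS coefficients.

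The main obstacle, and where essentially all of the real work lies, is establishing the archimedean lemma. It does \emph{not} follow from applying Stengle's theorem to $N - \sum_i x_i^2$, because that merely reproduces a denominator representation and returns us to the original problem. The route I would follow is an inductive combinatorial argument that lifts the univariate Bernstein--Hausdorff identity of~\cref{theo:bern} to the multivariate setting by repeated substitution: rewrite each coordinate polynomial in its Bernstein basis over an interval in which it is bounded, then absorb the cross terms into $R(P)$-products by induction on the number of defining polynomials $m$. This is the essential content of W{\"o}rmann's simplification of Schm{\"u}dgen's original operator-theoretic proof, and I expect the degree-versus-dimension bookkeeping inside this induction to be the single hardest step; once it is complete, the passage from the archimedean lemma plus Stengle's theorem to the final Schm{\"u}dgen representation is a formal manipulation.
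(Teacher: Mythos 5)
The paper does not prove this theorem: it is stated as imported background, attributed to Schm{\"u}dgen's 1991 paper via the citation immediately preceding the statement, so there is no in-paper argument to compare your outline against. Evaluating the outline on its own terms, your high-level plan---Stengle's Positivstellensatz to obtain $\sigma p \equiv p^{2k} + \tau$ with $\sigma,\tau$ in the preorder, then an archimedean lemma to turn boundedness on $K$ into preorder membership, then removal of the multiplier $\sigma$---is a recognizable skeleton of the W{\"o}rmann-style algebraic proof, and the logical organization is sound. But two of the pieces are mischaracterized in a way that would derail a writeup.

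First, your proposed route to the archimedean lemma is the wrong one. You suggest lifting the univariate Bernstein--Hausdorff identity of Theorem~\ref{theo:bern} by iterated substitution. That device works when the defining constraints are affine---it is essentially how one proves Handelman's theorem (Theorem~\ref{theo:handel}) or Polya's theorem over a box or simplex, where a Bernstein or Polya basis is available---but it has no purchase on the preorder generated by an arbitrary finite family of possibly high-degree polynomials $p_1,\ldots,p_m$, which is the setting of Schm{\"u}dgen's theorem. W{\"o}rmann's actual argument for the archimedean lemma proceeds by applying Stengle to the strictly positive polynomial $h := N - \sum_i x_i^2$ to get $s\,h = h^{2k} + t$ with $s,t \in T$, and then extracting $N' - \sum_i x_i^2 \in T$ by an algebraic completing-the-square manipulation; no Bernstein basis is involved. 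Second, you describe the final passage---from the archimedean lemma plus the Stengle identity to a genuine Schm{\"u}dgen representation of $p$---as a ``formal manipulation'' via a ``geometric-series-style rewriting.'' In the standard expositions (Marshall; Prestel--Delzell) this step is the abstract Kadison--Dubois/Jacobi--Prestel representation theorem, proved by a Zorn's-lemma extension of $T$ to a maximal proper ordering and an analysis of the resulting real place; it is not an identity you can write down by summing a series. A constructive alternative (Schweighofer) routes through Polya's theorem with explicit degree bounds, but that too is a substantial argument, not a formality. As a proposal your skeleton points in a correct direction, but the two steps you flag as routine are precisely where the proof lives, and the specific tool you propose for the archimedean lemma is not the right one.
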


While Schm{\"u}dgen representations are powerful, and in fact, subsume
the Handelman representation approach, or even the Bernstein
polynomial relaxations to be presented in
Section~\ref{Sec:lp-Bernstein-relaxations}, the computational cost of
using them is prohibitive. Using the
form~\cref{Eq:schmudgen-representation} requires finding $2^m$ SOS
polynomials. In our applications, $K$ typically represents the unit
rectangle $[-1,1]^n$, which makes the size of a Schm{\"u}dgen
representation exponential in the size of the variables.  As a result,
we will not consider this representation any further in this paper.

\paragraph{Putinar Representation: }\label{sec:putinar-representation}
The Putinar representation approach provides a less expensive
alternative.  Once again, let $K: ( p_1 \geq 0\ \land\ \cdots \ \land\
p_m \geq 0)$ be a set of interest.

\begin{theorem}\label{Theorem:putinar-representation-trivial}
If a polynomial $p$ can be expressed as 
\begin{equation}\label{Eq:putinar-representation}
p \equiv q_0 + q_1 p_1 + \cdots + q_m p_m 
\end{equation}
for SOS polynomials $q_0, \ldots,q_m$, then $p$ is positive
semi-definite over $K$.
\end{theorem}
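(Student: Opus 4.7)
The plan is to verify pointwise non-negativity by unpacking two elementary facts already in hand: (i) every sum-of-squares polynomial is non-negative on all of $\reals^n$, and (ii) each defining inequality $p_i \geq 0$ holds throughout $K$ by construction. Given these, the decomposition $p \equiv q_0 + \sum_{i=1}^m q_i p_i$ forces $p \geq 0$ on $K$ termwise, with no further machinery required.

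Concretely, I would fix an arbitrary $\vx \in K$. From the SOS assumption, writing each $q_i \equiv \sum_j r_{i,j}^2$ gives $q_i(\vx) = \sum_j r_{i,j}(\vx)^2 \geq 0$ for every $i \in \{0,1,\ldots,m\}$. From $\vx \in K$ I get $p_i(\vx) \geq 0$ for every $i \in \{1,\ldots,m\}$. Since the product of two non-negative reals is non-negative, $q_i(\vx)\, p_i(\vx) \geq 0$ for $i \geq 1$; adding $q_0(\vx) \geq 0$ and using the assumed identity yields
\[ p(\vx) \;=\; q_0(\vx) \;+\; \sum_{i=1}^m q_i(\vx)\, p_i(\vx) \;\geq\; 0 \,. \]
Because $\vx \in K$ was arbitrary, this establishes positive semi-definiteness of $p$ over $K$.

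There is essentially no obstacle: this is the easy (``sufficiency'') half of a Positivstellensatz, structurally identical to the trivial directions of Theorem~\ref{Theorem:conic-combination-of-power-products} for Handelman representations and Theorem~\ref{Theorem:schmudgen-represenation-trivial} for Schm{\"u}dgen representations. The genuinely deep content — Putinar's theorem, which asserts a converse under an Archimedean (compactness-type) hypothesis on $\{p_1,\ldots,p_m\}$ — is not claimed by the present statement, so no quadratic module or separation-theorem argument needs to be invoked here.
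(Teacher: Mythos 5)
Your argument is correct, and it is exactly the elementary pointwise verification the paper treats as self-evident (the paper states this theorem without spelling out a proof, just as it does for the analogous easy directions of the Handelman and Schm\"udgen representation results). Nothing is missing: fixing $\vx \in K$, using $q_i(\vx)\ge 0$ from the SOS hypothesis and $p_i(\vx)\ge 0$ from $\vx\in K$, and summing, is precisely the intended reasoning.
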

Decomposing a polynomial $p$ according to
Equation~\ref{Eq:putinar-representation} is said to provide
a \emph{Putinar representation} for $p$.  The converse of Theorem~\ref{Theorem:putinar-representation-trivial}
was proved by Putinar~\cite{Putinar1993}.
%% \begin{theorem}(Schmudgen's Positivstellensatz)
%% \label{theo:schm}
%% Let $p$ be a polynomial of degree $2r$.\\ Then $p(x) \ge 0$ on $[-1,1]$ if and only if
%% \begin{equation}
%% \label{eq:schm}
%%  p(x)= (f_0)^2+(1-x)(f_1)^2+(1+x)(f_2)^2+(1-x^2)(f_3)^2,%\quad  f_0,f_1,f_2,f_3 \in \Sigma[x]
%% \end{equation}
%% where  $f_0,f_1,f_2,f_3$ are polynomials of degree less or equal to $r-1$.
%% \end{theorem}
%%Putinar~\cite{Putinar1993} extends this result to a semi algebraic set $K$ (the $f_j$ are polynomials):
\begin{theorem}(Putinar)
\label{theo:putin}
 Let $K:( p_1 \geq 0\ \land\ \cdots \ \land\ p_m \geq 0) $ be a
 compact set, and suppose there exists a polynomial $p_0$ of the form
 $p_0 = r_0 + \sum_{j=1}^m r_i p_i$ where $r_0,\ldots,r_m$ are all
 SOS, and the set $\hat{K}:\ \{ \vx\ |\ p_0(\vx) \geq 0 \}$ is also
 compact.

It follows that every polynomial $p(\vx)$ that is strictly positive on $K$ has
a Putinar representation: $ p \equiv
q_0+\displaystyle{\sum_{j=1}^{m}q_j p_j}$ for SOS polynomials
$q_0,\ldots,q_m$.
\end{theorem}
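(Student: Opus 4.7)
The plan is to follow the classical ``Archimedean quadratic module plus functional-analytic separation'' route. Let $M$ denote the quadratic module generated by $p_1,\ldots,p_m$, namely
\[ M = \Bigl\{\, q_0 + \sum_{j=1}^{m} q_j p_j \;:\; q_0,\ldots,q_m \text{ SOS}\,\Bigr\}. \]
The goal is to show that any $p$ strictly positive on $K$ lies in $M$.

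First I would establish the \emph{Archimedean} property of $M$: for every $f \in \reals[\vx]$ there is a positive integer $N$ with $N \pm f \in M$. This is where the hypothesis on $p_0$ and the compactness of $\hat{K}$ enter. The key technical step (essentially due to Schm{\"u}dgen) is that a quadratic module containing an element whose superlevel set is compact must in fact contain a global norm bound $N - \sum_i x_i^2 \in M$; from such a bound, the identities $2fg = (f+g)^2 - f^2 - g^2$ and $2|f| \leq 1 + f^2$ let one propagate the bound to every polynomial by induction on degree, using closure of $M$ under addition and multiplication by SOS.

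With $M$ Archimedean in hand, I would argue by contradiction: suppose $p \notin M$. Then $M$ is a proper convex cone in the $\reals$-vector space $\reals[\vx]$ and, by Archimedeanness, $1$ is an algebraic interior point of $M$. A Hahn-Banach/Eidelheit separation theorem then produces a nonzero linear functional $L : \reals[\vx] \to \reals$ with $L(M) \subseteq [0,\infty)$ and $L(p) \leq 0$; normalize so that $L(1) = 1$. Archimedeanness forces $L$ to be bounded on polynomials: from $N - x_i^2 \in M$ we get $L(x_i^2) \leq N$, and Cauchy-Schwarz gives $L(f)^2 \leq L(f^2)$. Hence $L$ extends continuously to a positive linear functional on $C(K')$ for a sufficiently large compact box $K'$, and the Riesz--Markov representation theorem supplies a positive Borel measure $\mu$ on $K'$ with $L(f) = \int f\,d\mu$. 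The inequalities $\int q^2 p_j \, d\mu = L(q^2 p_j) \geq 0$ for every polynomial $q$ force $p_j \geq 0$ $\mu$-almost everywhere, so $\mathrm{supp}(\mu) \subseteq K$. Finally, since $p > 0$ on $K$ and $\mu$ is a nonzero positive measure supported in $K$, we obtain $L(p) = \int_K p \, d\mu > 0$, contradicting $L(p) \leq 0$. Therefore $p \in M$, yielding the desired Putinar representation.

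The hard part will be the Archimedean step: compactness of $\hat{K}$ is a purely geometric statement, whereas what is actually needed is the \emph{algebraic} membership $N - \sum_i x_i^2 \in M$. Bridging this gap is the non-trivial ingredient, and it is precisely why the hypothesis demands the specific syntactic form $p_0 = r_0 + \sum_j r_j p_j$ (i.e.\ $p_0 \in M$) rather than mere positivity of $p_0$ on $K$.
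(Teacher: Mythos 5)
The paper itself does not prove this theorem: it is stated as a classical result with a citation to Putinar (1993), and no argument is supplied. So there is no ``paper's proof'' to compare against; your sketch must be judged on its own.

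Your outline follows the standard functional-analytic proof of the Putinar Positivstellensatz, and the overall architecture is sound: show the quadratic module $M$ is Archimedean, apply an Eidelheit/Hahn--Banach separation if $p\notin M$, extract a representing measure supported in $K$ from the resulting positive functional, and contradict $p>0$ on $K$. You also correctly isolate the genuine subtlety, namely converting the \emph{geometric} hypothesis (compactness of $\hat{K}=\{p_0\ge 0\}$ together with $p_0\in M$) into the \emph{algebraic} membership $N-\sum_i x_i^2\in M$. The bridge is Schm{\"u}dgen's Positivstellensatz for the single constraint $\{p_0\ge 0\}$: with one generator the preordering and quadratic module coincide, so $N-\sum_i x_i^2 = \sigma_0+\sigma_1 p_0$ with $\sigma_0,\sigma_1$ SOS, and $\sigma_1 p_0\in M$ precisely because the hypothesis places $p_0\in M$. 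After that, propagating the bound to all polynomials via $2fg=(f+g)^2-f^2-g^2$ is routine.

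The one step phrased too loosely is ``$L$ extends continuously to a positive linear functional on $C(K')$, then apply Riesz--Markov''. Positivity of $L$ on $M$ gives $L(h^2)\ge 0$, and Archimedeanness gives finiteness of $L$ on $\reals[\vx]$, but neither yields continuity of $L$ in the sup norm of $C(K')$ nor positivity of $L$ on polynomials that are nonnegative on $K'$ without lying in $M$, so the extension you invoke does not come for free. The rigorous version of this step is usually done via the GNS construction: the bilinear form $\langle f,g\rangle := L(fg)$ gives a Hilbert space on which the coordinate multiplications $X_i$ act as commuting bounded self-adjoint operators (boundedness again comes from $N-x_i^2\in M$, since $L\bigl(h^2(N-x_i^2)\bigr)\ge 0$); the spectral theorem then yields a compactly supported measure $\mu$ with $L(f)=\int f\,d\mu$, and $L(h^2 p_j)\ge 0$ forces the support of $\mu$ into $K$. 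That repairs the gap; the rest of your argument, including the final strict-positivity contradiction, is correct.
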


A Putinar representation of $p$  for  a set $P = \{ p_1,\ldots,p_m\}$ involves expressing
$p \ \equiv\ q_0 + \sum_{j=1}^m q_j p_j$ for a SOS polynomial $q_j$.  Searching whether
a polynomial $p$ is positive semi-definite over $K:\ \bigwedge\limits_{p_j \in P} p_j \geq 0$
involves searching for a Putinar representation. 
\[ \mbox{find}\ q_0,\ldots,q_m\ \mbox{s.t.}\ p \equiv  q_0 + \sum_{j=1}^m q_j p_j,\ q_0,\ldots,q_m\ \mbox{are SOS} \,.\]
The key steps involve parameterizing $q_0,\ldots,q_m$ in terms of
polynomials of bounded degree $D$ over a set of unknown coefficients
$\vc$, and then solving the resulting problem through a relaxation to
semi-definite programming, originally proposed by Shor and further
developed by Parillo~\cite{Shor/1987/Class,Parillo/2003/Semidefinite}.
The resulting optimization problem is called a Sum-of-Squares
programming problem (SOS).

\subsection{Synthesis of Lyapunov Functions}\label{Sec:Lyapunov-synth}

We now summarize the standard approach to synthesizing Lyapunov
functions using Handelman or Putinar representations.  The Handelman
approach reduces the synthesis to solving a set of linear programs,
and was presented in our previous
work~\cite{Sriram2013}. The Putinar
representation approach uses SOS programming, and was presented by
Papachristadoulou et
al.~\cite{Papachristodoulou+Prajna/2002/Construction}. This approach
is implemented in a package SOSTOOLS that provides a user-friendly
interface for posing SOS programming problems and solving them by
relaxing to a semi-definite program~\cite{sostools}.

Let $U \subseteq \reals^n$ be a compact set and $\scr{S}$ be a system
defined by the ODE $\frac{d\vx}{dt} = f(\vx)$. We assume that the
origin is the equilibrium of $\scr{S}$, i.e, $f(\vec{0}) = 0$,
$\vec{0} \in \mathsf{interior}(U)$, and wish to prove local asymptotic
(or Lyapunov) stability of $\scr{S}$ for a subset of the region $U$.

Therefore, we seek a Lyapunov function of the form $V(\vx, \vc)$, 
wherein $V$ is a polynomial form over $\vx$ whose coefficients are polynomials over
$\vc$. Let $V'$ denote the Lie derivative of $V$, i.e, 
$V'(\vx,\vc) = (\nabla_{\vx}\ V)\cdot f$. 
We define the set $C$ as follows:
\begin{equation}\label{Eq:def-c}
  C = \{ \vc \ |\ V(\vx,\vc) \ \mbox{is\ positive definite for}\ \vx \in U \} \,.
\end{equation}
Also, let $\hat{C}$ represent the set:
\begin{equation}\label{Eq:def-c-prime}
\hat{C} = \{ \vc\ |\ V'(\vx,\vc)\ \mbox{is negative definite for}\ \vx \in U \}\,.
\end{equation}
We replace negative definiteness for negative semi-definiteness if
Lyapunov stability, rather than asymptotic stability is of interest.
The overall procedure for synthesizing Lyapunov functions proceeds as
follows:
\begin{enumerate}
\item Fix a template form $V(\vx,\vc)$ with parameters $\vc$.
\item Compute constraints $\psi[\vc]$ whose solutions yield the set $C$ in 
Equation~\eqref{Eq:def-c}.
\item Compute constraints $\hat{\psi}[\vc]$ whose solutions yield the set $\hat{C}$ from
Equation~\eqref{Eq:def-c-prime}.
\item Compute a value $\vc \in C \cap \hat{C}$ by solving the constraints
  $\psi \land \hat{\psi}$. The resulting function $V_c(\vx)$ is a Lyapunov
  function.
\end{enumerate}

The main problem, therefore, is to characterize a set $C$ for the
unknown parameters $\vc$, so $V_c(\vx)$ is positive definite over $U$
for all $\vc \in {C}$. Thus, the process of searching for Lyapunov
functions of a given form devolves into the problem of finding a
system of constraints for the sets ${C}, \hat{C}$.

\begin{remark}
  It must be remarked that finding a (strong) Lyapunov function
  $V(\vx)$ inside a region $U$, as presented thus far, does not
  necessarily prove that the system is asymptotically stable for every
  initial state $\vx \in U$. For instance, trajectories starting from
  $\vx \in U$ may exit the set $U$. 

  However, let $\gamma$ represent
  the largest value such that for all $\vx \in U$, $V(\vx) \leq
  \gamma$.
  \[ \gamma:\ \max_{\vx \in U}\ V(\vx) \]
It can be shown that the system is 
asymptotically stable inside the  set $V_{\gamma}:\ \{ \vx | V(\vx) \leq \gamma \}$.
\end{remark}

Handelman representations and Putinar representations provide us two
approaches to encoding the positive definiteness of $V$ and negative
definiteness of $V'$ to characterize the sets $C,\hat{C}$.

\paragraph{Handelman Representations:} We now briefly summarize our previous
work that uses Handelman representations for Lyapunov function
synthesis~\cite{Sriram2013}.

 Let us assume that the set $U$ is written as a semi-algebraic set:
\[ U: \bigwedge\limits_{j=1}^m p_j(\vx) \geq 0 \]
Let $P = \{ p_1, \ldots,p_m\}$ represent these constraints.  Given a
degree limit $D$, we construct the set $\mathsc{pp}(P,D)$ of all
power-products of the form $\prod\limits_{j=1}^m p_j^{n_j}$ wherein
$0 \leq n_j \leq D$.

We encode positive semi-definiteness of a form $V(\vx,\vc)$ by writing it as
\begin{equation}\label{Eq:handelman-repr-eq}
 V(\vx,\vc) \equiv \sum\limits_{f \in \mathsc{pp}(P,D)}\ \lambda_f f\,\ \mbox{wherein}\ \lambda_f \geq 0 \,.
\end{equation}
Positive definiteness is encoded using a standard trick presented by
Papachristodoulou et
al.~\cite{Papachristodoulou+Prajna/2002/Construction}.  Briefly, the
idea is to write $V= \hat{V} + \sum_{j=1}^n \epsilon x_j^{2p}$ for
$\hat{V}(\vx,\vc)$, an unknown positive semi-definite function and a
fixed positive definite contribution given by setting
$\epsilon,p$. This idea is used in all our examples wherein positive
definiteness is to be encoded rather than positive semi-definiteness.

We eliminate $\vx$ by equating the coefficients of monomials on both
sides of~\cref{Eq:handelman-repr-eq}, and obtain a set of linear
constraints $\psi[\vc,\vec{\lambda}]$ involving $\vc$ and
$\vec{\lambda}$.  The set $C$ is characterized as a polyhedron
obtained by the projection
\[ C:\ \{ \vc\ |\ \exists\ \vec{\lambda} \geq 0\ \psi(\vc,\vec{\lambda}) \} \,.\]
In practice, we do not project $\vec{\lambda}$, but instead retain $\psi$ as a set
of constraints involving both $\vc,\vec{\lambda}$. Similarly, we consider the
Lie derivative $V'(\vc,\vx)$ and obtain constraints $\psi(\vc,\vec{\mu})$
for a different set of multipliers $\vec{\mu}$. 

The overall problem reduces to finding a value of $\vec{c}$ that satisfies the
constraints
\[ \psi(\vc,\vec{\lambda})\ \land\ \hat{\psi}(\vc,\vec{\mu}) \,,\ \]
for some $\vec{\lambda}, \vec{\mu} \geq 0$. This is achieved by solving
a set of linear programs.

\begin{example}
  Consider a parametric polynomial $p(\vc,\vx): c_1 x_1^2 + c_2 x_2^2
  + c_3 x_1 x_2 + c_4 x_1 + c_5 x_2 + c_6$ and the set $K$ defined by
  the constraints: $ x_2 - x_1 \geq -1\ \land\ x_1 + x_2 \geq 2 $. We
  will use a Handelman representation to characterize a set of
  parameters $C$ s.t.  $ \vx \in K \models
  p(\vc,\vx) \geq 0$. Using degree-2 Handelman representation, we obtain the following larger set
of constraints:
\[ \begin{array}{cl}
x_2 - x_1 + 1 \geq 0\ \land\ x_1 + x_2 - 2 \geq 0\ \land\\
  x_2^2 + x_1^2 - 2 x_1 x_2 + 2 x_2 - 2 x_1 + 1 \geq 0 \land & \leftarrow\ (x_2 - x_1 + 1)^2 \geq 0 \\
  x_1^2 + x_2^2 +2 x_1 x_2 - 4 x_1  - 4 x_2 + 4 \geq 0\ \land\  &  \leftarrow\ (x_1 + x_2 -2)^2 \geq 0 \\
-x_1^2 + x_2^2 + 3 x_1 - x_2 -2 \geq 0 & \leftarrow (x_2 - x_1 + 1) (x_1 + x_2 -2) \geq 0 \\
\end{array}\]

We express $p$ as a linear combination of these constraints yielding the following equivalence:
\[ c_1 x_1^2 + c_2 x_2^2
  + c_3 x_1 x_2 + c_4 x_1 + c_5 x_2 + c_6 \ \equiv\ \left(\begin{array}{c} 
\lambda + \lambda_0 ( x_2 - x_1 + 1) + \lambda_1 ( x_1+ x_2 -2) + \\
\lambda_2 (  x_2^2 + x_1^2 - 2 x_1 x_2 + 2 x_2 - 2 x_1 + 1 ) + \\
\lambda_3( x_1^2 + x_2^2 +2 x_1 x_2 - 4 x_1  - 4 x_2 + 4  ) + \\
\lambda_4( -x_1^2 + x_2^2 + 3 x_1 - x_2 -2) 
\end{array} \right)\]
where $\lambda,\lambda_0, \ldots, \lambda_4 \geq 0$.
Matching coefficients of monomials on both sides, we obtain linear inequality constraints
involving variables $c_1, \ldots, c_6$ and $\lambda_1, \ldots, \lambda_4$:
\[ \begin{array}{rcll}
c_1 & = & \lambda_2 + \lambda_3 - \lambda_4  & \leftarrow\ \mbox{Matching}\ x_1^2\\
c_2 &=& \lambda_2 + \lambda_3 +\lambda_4 & \leftarrow\ \mbox{Matching}\ x_2^2\\
c_3 &=& -2 \lambda_2 + 2 \lambda_3 & \leftarrow\ \mbox{Matching}\ x_1 x_2 \\
& \vdots &  & \leftarrow\ \mbox{Matching}\ x_1, x_2 \\
c_6 &\geq &   \lambda_0 - 2 \lambda_1 + \lambda_2 + 4 \lambda_3  - 2 \lambda_4 & \leftarrow\ \mbox{Matching\ constant term} \\
\lambda_0,\ldots,\lambda_4 & \geq & 0 \\
\end{array}\]
Any nonzero solution yields a set of values for $\vc$ and the
corresponding Handelman representation for degree $2$. 
\end{example}

\paragraph{Putinar Representations:} Papachristadoulou and Prajna present
the Putinar representations approach to synthesizing Lyapunov functions.
Once again, we consider a semi-algebraic set $U$, as before. We fix
a form $V(\vc,\vx)$ for the Lyapunov and write 
\[ V  \equiv q_0 + \sum\limits_{j=1}^m q_j p_j \,.\]
for SOS polynomials $q_0,\ldots,q_m$. The approach fixes the degree of
each $q_j$ and uses SOS programming to encode the positivity.  The
result is a system of constraints over the parameters $\vc$ for $V$
and the unknowns $\vec{\lambda}$ that characterize the SOS multipliers
$q_j$. The same approach encodes the negative semi-definiteness of $V'$
over $U$. The combined result is a semi-definite program that jointly
solves for the positive definiteness of $V$ and the negative
definiteness of $V'$.  A solution is recovered by solving the
feasibility problem for an SDP to yield the values for $\vc$ that
yield a Lyapunov certificate for stability.

\section{Linear Programming relaxations based on Bernstein polynomials}\label{Sec:lp-Bernstein-relaxations}
%%\input{section3.tex}
%%%%%%%%%%%%%%%%%%%%%%%%%%%%%
In this section, we recall the use of Bernstein polynomials for
establishing bounds on polynomials in intervals.  Given a
multi-variate polynomial $p$, proving that $p$ is positive
semi-definite in $K$ is equivalent to showing that the 
optimal value of the following optimization problem  is non-negative:
\begin{equation}\label{eq:POP}
\begin{array}{ll}
\text{minimize} & p(\vx)\\
\text{s.t} & \vx \in K .
\end{array}
\end{equation}
Whereas (\ref{eq:POP}) is hard to solve, we will construct a linear
programming (LP) relaxation, whose optimal value is guaranteed to be a
lower bound on $p^*$.  If the bound is tight enough, then we can prove
the positivity of polynomial $p$ on $K$. 

In general, the Handelman representation approach presented in
Section~\ref{Sec:positivity-representations} can be used to construct a linear
programming relaxation~\cite{Sriram2013}.  In this section, we will
use Bernstein polynomials for the special unit box ($K=[0,1]^n$).
Bernstein polynomials extend the Handelman approach, and will be shown
to be strictly more powerful when $K$ is the unit box. In our
application examples, $K$ is often a hyper-rectangle but not
necessarily the unit box. We use an affine transformation to transform
$p$ and $K$ back to the unit box, so that the Bernstein polynomial
approach can be used.

\subsection{Overview of Bernstein polynomials}
Bernstein polynomials were first proposed by Bernstein as a
constructive proof of Weierstrass approximation
theorem~\cite{Bernstein/1912/Demonstration}. Bernstein polynomials are useful
in many engineering design applications for approximating geometric
shapes~\cite{Farouki/2012/Bernstein}. They form a basis for
approximating polynomials over a compact interval, and have nice
properties that will be exploited to relax the optimization~\eqref{eq:POP} to a linear program.
Here, we should mention that a relaxation using Bernstein polynomials was provided in the context of reachability analysis
for polynomial dynamical systems~\cite{DangSalinas09} and improved in~\cite{Bensassi/reach/2012}.
The novelty in this work is not only the adaptation of these relaxations in the context of polynomial Lyapunov function synthesis 
but also a new tighter relaxation will be introduced by exploiting the induction relation between Bernstein polynomials.
More details on Bernstein polynomials are available elsewhere~\cite{Munoz+Narkavicz/2013/Formalization}.
 
We first examine Bernstein polynomials and their properties for the univariate case and
then extend them to multivariate polynomials (see~\cite{Bernstein1,Bernstein2}).
\begin{definition}[Univariate Bernstein Polynomials]
Given an index $i \in \{0,\ldots,m\}$, the $i^{th}$ univariate Bernstein
polynomial of degree $m$ over $[0,1]$ is given by the following
expression:
\begin{equation}
 \beta_{i,m}(x)=\left(
\begin{array}{c}
 m \\ i
\end{array}
\right) x^i(1-x)^{m-i}, \quad i\in \{0,\dots,m\}.
\end{equation}
\end{definition}
In the Bernstein polynomial basis, a univariate polynomial $p(x): \sum\limits_{j=0}^m p_jx^j$ of
degree $m$, can be written as:
$$
p(x)=\displaystyle{\sum_{i=0}^{m}  b_{i,m} \beta_{i,m}(x) }
$$         
where for all $i=0,\dots,m$:
\begin{equation}\label{eq:bernstein-coeff}
 b_{i,m}=\sum_{j=0}^i \frac{\left(
\begin{array}{c}
 i \\ j
\end{array}
\right) 
}
{\left(
\begin{array}{c}
 m \\ j
\end{array}
\right) } p_j.
\end{equation}
The coefficients $b_{i,m}$ are called the \emph{Bernstein
  coefficients} of the polynomial $p$.

Bernstein polynomials have many interesting properties. We summarize the most relevant ones for our applications, below:
\begin{lemma}\label{lem:bernprop}
For all $x\in[0,1]$, and for all $m \in \mathbb{N}$, the Bernstein polynomials $\{ \beta_{0,m},\ldots,\beta_{m,m}\}$ have
the following properties: 
\begin{enumerate}
\item Unit partition: $\displaystyle{\sum_{i=0}^{m} \beta_{i,m}(x) }=1.$
\item Bounds: $0 \le \beta_{i,m}(x) \le \beta_{i,m}(\frac{i}{m}), \text{ for
  all }i=0,\dots,m.$
%\item Symmetric property:  $B_{i,m}(x) = B_{m-i,m}(1-x) \text{ avec }i=0,\dots,m.$
%\item Decomposition: $x=\displaystyle{\sum_{i=0}^{m} B_{i,m}(x)\frac{i}{m} }.$
\item Induction property: $\beta_{i,m-1}(x)=\frac{m-i}{m}\beta_{i,m}(x)+\frac{i+1}{m}\beta_{i+1,m}(x), \text{ for all }i=0,\dots,m-1.$
\end{enumerate}
\end{lemma}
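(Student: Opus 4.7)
The plan is to verify each of the three properties separately, since they are essentially algebraic identities in the Bernstein basis together with a single optimization step for the bound property. All three reduce to manipulations of the binomial coefficient $\binom{m}{i}$ and the factor $x^i(1-x)^{m-i}$, so I expect no real obstacle; the only mildly subtle step is the boundary analysis in part (2).

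For the unit partition, I would simply apply the binomial theorem to $(x+(1-x))^m$. The expansion reads $\sum_{i=0}^m \binom{m}{i} x^i (1-x)^{m-i}$, which is precisely $\sum_{i=0}^m \beta_{i,m}(x)$, and since $x+(1-x)=1$, the sum evaluates to $1$. This requires no hypothesis on $x$ beyond being a real number.

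For the bounds, non-negativity is immediate: on $[0,1]$ both $x^i$ and $(1-x)^{m-i}$ are non-negative, and $\binom{m}{i}>0$. To show $\beta_{i,m}$ attains its maximum on $[0,1]$ at $x=i/m$, I would differentiate, obtaining $\beta_{i,m}'(x) = \binom{m}{i}\,x^{i-1}(1-x)^{m-i-1}\bigl(i(1-x)-(m-i)x\bigr)$, and solve the interior critical-point equation $i(1-x) = (m-i)x$, which gives $x = i/m$. Comparing with the endpoints $\beta_{i,m}(0)$ and $\beta_{i,m}(1)$ (which vanish whenever the corresponding exponent is positive, and coincide with $i/m$ otherwise for the edge cases $i=0$ and $i=m$) shows the interior critical point is indeed the global maximum on $[0,1]$.

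For the induction relation, I would rewrite each coefficient on the right-hand side using $\binom{m}{i} = \frac{m!}{i!(m-i)!}$. A direct computation gives
\begin{equation*}
\frac{m-i}{m}\binom{m}{i} \;=\; \binom{m-1}{i} \quad\text{and}\quad \frac{i+1}{m}\binom{m}{i+1} \;=\; \binom{m-1}{i},
\end{equation*}
so the right-hand side becomes
\begin{equation*}
\binom{m-1}{i}\bigl(x^i(1-x)^{m-i} + x^{i+1}(1-x)^{m-i-1}\bigr) \;=\; \binom{m-1}{i}\,x^i(1-x)^{m-i-1}\bigl((1-x)+x\bigr),
\end{equation*}
which simplifies to $\binom{m-1}{i}\,x^i(1-x)^{m-1-i} = \beta_{i,m-1}(x)$. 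The only step requiring slight care is the algebra with binomial coefficients; beyond that the identities are immediate.
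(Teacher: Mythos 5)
Your proof is correct on all three parts. The paper itself offers no proof of this lemma; it is stated as a collection of well-known facts about Bernstein polynomials, with the reader referred to the literature (the works of Bernstein, Garloff, and Mu\~noz--Narkawicz cited nearby). Your arguments---the binomial theorem for the unit partition, the first-derivative test together with endpoint inspection for the bound, and the two binomial-coefficient identities $\frac{m-i}{m}\binom{m}{i}=\binom{m-1}{i}=\frac{i+1}{m}\binom{m}{i+1}$ for the induction relation---are exactly the standard computations one would supply, and each step checks out, including the $i=0$ and $i=m$ edge cases in part (2) where the extremum sits at an endpoint and the formula $x=i/m$ degenerates gracefully.
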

Using the unit partition and positivity of Bernstein polynomials, the following bounds result holds:
\begin{corollary}\label{cor:bernbound}
On the interval $[0,1]$, a polynomial $p$ with Bernstein coefficients $b_{0,m}, \ldots, b_{m,m}$, 
 the following inequality holds~\cite{Garloff93}:
\begin{equation}
%%\label{eq:bernbound}
 \displaystyle{\min_{i=0,\dots,m}b_{i,m}\le p(x) \le \max_{i=0,\dots,m}b_{i,m}}.
\end{equation}
\end{corollary}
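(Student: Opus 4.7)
The plan is to derive both inequalities directly from the two key facts stated in Lemma~\ref{lem:bernprop}, namely the unit partition identity and the non-negativity of the univariate Bernstein polynomials on $[0,1]$. Since the Bernstein coefficients are defined precisely so that $p(x) = \sum_{i=0}^m b_{i,m}\beta_{i,m}(x)$, the argument reduces to a weighted-average type estimate.

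First I would fix an arbitrary $x\in[0,1]$ and denote $M = \max_{i} b_{i,m}$ and $m^{\ast} = \min_{i} b_{i,m}$. For the upper bound, I would use the obvious pointwise inequality $b_{i,m}\,\beta_{i,m}(x) \le M\,\beta_{i,m}(x)$, which is valid termwise because $\beta_{i,m}(x)\ge 0$ on $[0,1]$ by the bounds property of Lemma~\ref{lem:bernprop}. Summing over $i$ and pulling $M$ out gives
\[
p(x) \;=\; \sum_{i=0}^{m} b_{i,m}\,\beta_{i,m}(x) \;\le\; M\sum_{i=0}^{m}\beta_{i,m}(x) \;=\; M,
\]
where the last equality is exactly the unit partition property. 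The lower bound follows by the symmetric argument, replacing $M$ by $m^{\ast}$ and reversing the inequality. Combining the two yields the claimed enclosure.

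There is essentially no obstacle here: the result is a one-line consequence of the two properties already established in Lemma~\ref{lem:bernprop}, and does not depend on the induction property or on the specific formula~\eqref{eq:bernstein-coeff} for the $b_{i,m}$. The only subtlety worth remarking is that the non-negativity $\beta_{i,m}(x)\ge 0$ genuinely requires $x\in[0,1]$ (outside this interval the factor $(1-x)^{m-i}$ can become negative for odd $m-i$), so the enclosure is specifically an enclosure on the unit interval, consistent with the statement of the corollary.
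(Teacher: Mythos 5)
Your proof is correct and follows exactly the route the paper indicates: the sentence preceding the corollary attributes the bound to the unit-partition and non-negativity properties of Lemma~\ref{lem:bernprop}, and your weighted-average argument is the standard one-line instantiation of those two facts.
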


We generalize the previous notions to the case of multivariate
polynomials i.e $p(\vx)=p(x_1,\dots,x_n)$) where $\vx=(x_1,\dots,x_n)\in U=[0,1]^n$. 
For multi-indices,
$I=(i_1,\dots,i_n)\in \mathbb{N}^n$,
$J=(j_1,\dots,j_n)\in \mathbb{N}^n$, we fix the following notation:
\begin{itemize}
%\item For all $x\in \mathbb{R}^n,\quad 1_n-x=(1-x_1,\dots,1-x_n)$.
\item $I \le J \; \iff \; i_l \le j_l,$  for all  $l=1,\dots n.$ 
\item $\frac{I}{J}=\left(\frac{i_1}{j_1},\dots,\frac{i_n}{j_n}\right)$ and 
 $\left(
\begin{array}{c}
 I \\ J
\end{array}
\right)=\left(
\begin{array}{c}
 i_1 \\ j_1
\end{array}
\right)\dots \left(
\begin{array}{c}
 i_n \\ j_n
\end{array}
\right).$
\item $I_{r,k}=(i_1,\dots,i_{r-1},i_r+k,i_{r+1},\dots,i_n)$ where $r\in\{1,\dots,n\}$ and $k\in \Z$.
\end{itemize}
Let us fix our maximal degree $\delta=(\delta_1,\dots,\delta_n)\in \mathbb{N}^n$ for a multi-variate polynomial $p$ ($\delta_l$ is the maximal degree of $x_l$ for all $l=1,\dots,n$).
Then the multi-variate polynomial $p$ will have the following form:
$$
p(\vx) = \sum_{I\le \delta} p_I \vx^I \text{ where } p_I \in \mathbb{R},\; \forall I\le \delta.
$$
Multivariate Bernstein polynomials are given by products of the univariate polynomials:
\begin{equation}
 B_{I,\delta}(\vx) = \beta_{i_1,\delta_1}(x_1) \dots \beta_{i_n,\delta_n}(x_n) \text{ where } 
\beta_{i_j,\delta_j}(x_j)=
\left(
\begin{array}{c}
 \delta_j \\ i_j
\end{array}
\right) 
x_j^{i_j} (1-x_j)^{\delta_j-i_j}.
\end{equation}
Thanks to the previous notations, these polynomials can also be written as follows: 
\begin{equation}
 B_{I,\delta}(\vx) =\left(
\begin{array}{c}
 \delta \\ I
\end{array}
\right) 
\vx^{I} (1_n-\vx)^{\delta-I}.
\end{equation}
Now, we can have the general expression of a multi-variate polynomial in the Bernstein basis:
$$
p(\vx)=\displaystyle{\sum_{I\le\delta }  b_{I,\delta} B_{I,\delta}(\vx) },
$$         
where Bernstein coefficients $(b_{I,\delta})_{I\le\delta}$ are given as follows:
\begin{equation}\label{eq:bernstein-coeff-formula}
b_{I,\delta}=\sum_{J\le I} \frac{\left(
\begin{array}{c}
 i_1 \\ j_1
\end{array}
\right) \dots
\left(
\begin{array}{c}
 i_n \\ j_n
\end{array}
\right) 
}
{\left(
\begin{array}{c}
 \delta_1 \\ j_1
\end{array}
\right) \dots
\left(
\begin{array}{c}
 \delta_n \\ j_n
\end{array}
\right) } p_J=\sum_{J\le I}\frac{\left(
\begin{array}{c}
 I \\ J
\end{array}
\right)}{\left(
\begin{array}{c}
 \delta \\ J
\end{array}
\right)}p_J.
\end{equation}
Therefore, the generalization of Lemma~\ref{lem:bernprop} will lead to the following properties:
\begin{lemma}
\label{lem:bernprop1}
For all $\vx=(x_1,\dots,x_n)\in U$ we have the following properties:
\begin{enumerate}
\item Unit partition: $\displaystyle{\sum_{I\le \delta} B_{I,\delta}(\vx) }=1.$
\item Bounded polynomials: $0 \le B_{I,\delta}(\vx) \le B_{I,\delta}(\frac{I}{\delta}), \text{ for all } I\le \delta.$
%\item Symétrie :  $B_{I,\delta}(x) = B_{\delta-I,\delta}(1_n-x)  $ où $1_n$ est le $n$-uplet $(1,\dots,1).$
%\item  $x=\displaystyle{\sum_{I\le \delta} B_{I,\delta}(x)\frac{I}{\delta}}.$
\item Induction relation: $B_{I,\delta_{r,-1}}=\frac{\delta_r-i_r}{\delta_r}B_{I,\delta}+\frac{i_r+1}{\delta_r}B_{I_{r,1},\delta}, \text{ for all } r=1,\dots,n., \text{ and all }I\le \delta_{r,-1}.$
\end{enumerate}
\end{lemma}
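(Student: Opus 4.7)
The plan is to reduce all three statements to the corresponding univariate facts from Lemma~\ref{lem:bernprop}, exploiting the fact that the multivariate Bernstein polynomials are defined as tensor products of univariate ones:
\[
B_{I,\delta}(\vx) \;=\; \prod_{j=1}^n \beta_{i_j,\delta_j}(x_j).
\]
Once this product structure is in place, each of the three claims reduces to a distributivity/factoring argument combined with a univariate identity.

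For the unit partition, I would interchange the sum over the multi-index $I\le\delta$ with the product over coordinates:
\[
\sum_{I\le\delta} B_{I,\delta}(\vx) \;=\; \sum_{i_1=0}^{\delta_1}\cdots \sum_{i_n=0}^{\delta_n} \prod_{j=1}^n \beta_{i_j,\delta_j}(x_j) \;=\; \prod_{j=1}^n \Bigl(\sum_{i_j=0}^{\delta_j} \beta_{i_j,\delta_j}(x_j)\Bigr).
\]
Each inner sum equals $1$ by Lemma~\ref{lem:bernprop}(1), so the product is $1$. For the bounds in item~2, non-negativity is immediate from $\beta_{i_j,\delta_j}(x_j)\ge 0$ on $[0,1]$. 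For the upper bound, the univariate part of Lemma~\ref{lem:bernprop} gives $\beta_{i_j,\delta_j}(x_j)\le \beta_{i_j,\delta_j}(i_j/\delta_j)$ for each $j$; taking the product of these coordinate-wise inequalities, which is valid since both sides are non-negative, yields
\[
B_{I,\delta}(\vx) \;\le\; \prod_{j=1}^n \beta_{i_j,\delta_j}\!\bigl(i_j/\delta_j\bigr) \;=\; B_{I,\delta}\!\bigl(I/\delta\bigr),
\]
as required.

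For the induction relation in item~3, only the $r$th coordinate is changed in the degree vector $\delta_{r,-1}$, so I would factor out the other coordinates:
\[
B_{I,\delta_{r,-1}}(\vx) \;=\; \beta_{i_r,\delta_r-1}(x_r)\cdot \prod_{j\neq r}\beta_{i_j,\delta_j}(x_j).
\]
Applying the univariate induction identity from Lemma~\ref{lem:bernprop}(3) to $\beta_{i_r,\delta_r-1}(x_r)$ rewrites it as $\tfrac{\delta_r-i_r}{\delta_r}\beta_{i_r,\delta_r}(x_r) + \tfrac{i_r+1}{\delta_r}\beta_{i_r+1,\delta_r}(x_r)$. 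Multiplying both terms back by the unchanged factors $\prod_{j\ne r}\beta_{i_j,\delta_j}(x_j)$ reassembles exactly $B_{I,\delta}$ and $B_{I_{r,1},\delta}$, yielding the claimed recurrence.

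No step looks genuinely difficult; the main thing to watch is notational hygiene — specifically that the substitution in the multi-index (changing only the $r$th component via $I_{r,k}$ and $\delta_{r,k}$) is tracked correctly, and that the factoring of the sum into a product in item~1 is justified by the finite distributive law. The univariate lemma does all the real work, so the multivariate statement is essentially a coordinate-wise lift.
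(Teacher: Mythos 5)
Your proof is correct and takes the approach the paper implicitly intends: the paper states Lemma~\ref{lem:bernprop1} as "the generalization of Lemma~\ref{lem:bernprop}" without spelling out a proof, and the tensor-product reduction you give — factoring the sum into a product of univariate sums for item~1, multiplying the non-negative coordinate-wise bounds for item~2, and isolating the $r$th factor before applying the univariate recurrence for item~3 — is exactly the argument that justifies that claim.
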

Finally, in the case of general rectangle $K=[\underline{x_1},\overline{x_1}]\times \dots \times [\underline{x_n},\overline{x_n}]$
it suffices to make a change of variables $x_j=\underline{x_j}+z_j(\overline{x_j}-\underline{x_j})$ for all $j=1,\dots,n$ to obtain new variables $\vz=(z_1,\dots,z_n)\in U$.

\subsection{Bernstein relaxations}
We assume that $K$ is a bounded rectangle. Without loss of generality,
we can assume that $K= [0,1]^n$ since we can be reduced to the unit
box by a simple affine transformation.  Using the previous properties
we are going to construct three LP relaxations to problem
(\ref{eq:POP}).

\paragraph{Reformulation Linearization Technique (RLT)} We first recall a simple approach to relaxing 
polynomial optimization problems to linear
programs~\cite{sherali91,sherali97}. We then carry out these
relaxations for Bernstein polynomials, and show how the properties in
Lemma~\ref{lem:bernprop1} can be incorporated into the relaxation
schemes. Recall, once again, the optimization problem~\eqref{eq:POP}.
\[\begin{array}{ll}
\text{minimize} & p(\vx)\\
\text{s.t} & \vx \in K .
\end{array}\]
For simplicity, let us assume $K: [0,1]^n$ be the unit rectangle. $K$
is represented by the constraints $K:\ \bigwedge\limits_{j=1}^n ( x_j
\geq 0\ \land (1- x_j) \geq 0 $.  The standard RLT approach consists
of writing $p(\vx) = \sum_{I} p_I\vx^I$ as a linear form
$p(\vx):\ \sum_{I} p_I y_I$ for \emph{fresh variables} $y_I$ that are
place holders for the monomials $\vx^I$. Next, we write down as many
\emph{facts} about $\vx^I$ over $K$ as possible. The basic approach
now considers all possible power products of the form
$\pi_{J,\delta}:\ \vx^J (1 - \vx)^{\delta-J}$ for all $J \leq \delta$,
where $\delta$ is a given degree bound. Clearly if $\vx \in K$ then 
$\pi_{J,\delta}(\vx) \geq 0$. Expanding
$\pi_{J,\delta}$ in the monomial basis as  $\pi_{J,\delta}:\ \sum_{I\leq \delta} a_{I,J} \vx^I$, we write the
linear inequality constraint,
\[ \sum_{I \leq J} a_{I,J}\ y_I \geq 0 \]
The overall LP relaxation is obtained as
\begin{equation}\label{eq:rlt-relax-pop}
\begin{aligned}
\text{minimize}& \sum_{I} p_I y_I \\
\text{s.t.} & \sum_{I \leq J} a_{I,J}\ y_I \geq 0,\ \mbox{for each}\ J \leq \delta \\
\end{aligned}
\end{equation}
Additionally, it is possible to augment this LP by adding inequalities
of the form $\ell_I \leq y_I \leq u_I$ through the interval evaluation
of $\vx^I$ over the set $K$.

\begin{proposition}
For any polynomial $p$, the optimal value computed by the LP~\eqref{eq:rlt-relax-pop} is 
a lower bound on that of the polynomial program~\eqref{eq:POP}.
\end{proposition}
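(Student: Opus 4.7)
The plan is the standard relaxation argument: exhibit a map from feasible points of the polynomial program~\eqref{eq:POP} to feasible points of the LP~\eqref{eq:rlt-relax-pop} that preserves the objective value, then use the fact that both are minimizations. The direction to keep in mind is that the LP has more feasible points than the original program, so its optimum can only be smaller (i.e., a lower bound).

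First I would define the lifting $\vx \mapsto y(\vx)$ by $y_I(\vx) := \vx^I$ for every multi-index $I$ with $I \le \delta$, and let $\vx$ range over $K = [0,1]^n$. I would then verify that $y(\vx)$ is feasible for the LP. For each $J \le \delta$, the coefficients $a_{I,J}$ were defined so that $\pi_{J,\delta}(\vx) = \vx^J(1_n - \vx)^{\delta-J} = \sum_{I \le \delta} a_{I,J}\,\vx^I$; consequently
\[
\sum_{I \le \delta} a_{I,J}\, y_I(\vx) \;=\; \pi_{J,\delta}(\vx) \;\ge\; 0,
\]
since each factor $x_j$ and $1 - x_j$ is nonnegative on $[0,1]$. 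If the optional interval bounds $\ell_I \le y_I \le u_I$ are included, these are by construction valid range bounds for $\vx^I$ on $K$, so $y(\vx)$ satisfies them as well.

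Next I would compare objectives at the lifted point: $\sum_I p_I\, y_I(\vx) = \sum_I p_I\, \vx^I = p(\vx)$, so the LP objective at $y(\vx)$ equals the original objective at $\vx$. Hence every $\vx \in K$ yields a feasible point $y(\vx)$ of the LP with identical cost. Taking $\vx^*$ to be an optimal (or near-optimal) point of \eqref{eq:POP}, $y(\vx^*)$ is LP-feasible with LP-value $p(\vx^*) = p^*$, and since the LP is a minimization its optimum is at most $p^*$, which is precisely the claimed lower-bound property.

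There is essentially no obstacle here: the RLT construction was arranged so that every inequality imposed on the $y_I$'s is the monomial expansion of a polynomial identity or inequality that genuinely holds on $K$, which is exactly what is needed for the lifting argument. The only point worth stating carefully is that the implication goes one way only — the LP may admit feasible $y$'s that are not of the form $y_I = \vx^I$ for any $\vx \in K$, which is precisely why the LP value is a \emph{relaxation} and in general a strict lower bound rather than an equality.
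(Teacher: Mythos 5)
Your proof is correct and complete: the lifting $\vx \mapsto (y_I(\vx))_{I\le\delta}$ with $y_I(\vx)=\vx^I$ produces an LP-feasible point with matching objective value, so the LP minimum can only be smaller. The paper itself actually leaves this proposition unproved (it moves directly to an example), but the argument you give is precisely the one the authors use a page later to prove Proposition~\ref{prop:lpbern-relation} for the Bernstein relaxations — there they substitute $z_I = B_{I,\delta}(y)$ and observe feasibility in the same way — so your approach is the intended one.

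One small observation that does the paper more credit than the source text deserves: the LP in~\eqref{eq:rlt-relax-pop} as printed has the constraint $\sum_{I\le J} a_{I,J}\,y_I\ge 0$, but the polynomial $\pi_{J,\delta}=\vx^J(1_n-\vx)^{\delta-J}$ expands over all monomials $\vx^I$ with $I\le\delta$, not just $I\le J$; you correctly summed over $I\le\delta$ in the feasibility check, matching the surrounding text of the paper rather than the typo in the displayed LP.
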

\begin{example}
We suppose to compute a lower bound for the following POP:
\begin{equation}\label{ex:rlt}
\begin{array}{ll}
\text{minimize}&  {x_1}^2+{x_2}^2 \\
\text{s.t.} & (x_1,x_2)\in[0,1]^2 \\
\end{array}
\end{equation}
Using the RLT technique for a degree $\delta=2$ we denote by $y_{i,j}$ the fresh variables 
replacing the non linear terms $x^{(i,j)}={x_1}^i{x_2}^j$ for all $(i,j)\in \mathbb{N}^2$ such that $i+j \le2$.
Using these notations the objective function of the relaxation will be $ y_{20}+y_{02}$. The constraints
are given by the linearized form of the possible products (degree less than $\delta$) of the following constraints : $x_1\ge 0$, $x_2\ge 0$, $1-x_1\ge 0$
and $1-x_2\le 0$.
For example the constraint $y_{11}\ge 0$ is obtained by multiplying $x_1\ge 0$ and $x_2\ge 0$.
\end{example}

\paragraph{RLT using Bernstein Polynomials}
The success of the RLT approach depends heavily on writing ``facts''
involving the variables $y_I$ that substitute for $\vx^I$. We now
present the core idea of using Bernstein polynomial expansions and the
richer bounds that are known for these polynomials from
Lemma~\ref{lem:bernprop1} to improve upon the basic RLT approach. 

First, we write $p(\vx)$ as a sum of Bernstein polynomials of degree $\delta$.
\[ p(\vx): \sum_{I \leq \delta} b_{I,\delta} B_{I,\delta} \,, \]
wherein $b_{I,\delta}$ are calculated using the formula in
equation~\eqref{eq:bernstein-coeff-formula}.  Let us introduce a fresh
variable $z_{I,\delta}$ as a place holder for $B_{I,\delta}(\vx)$.
Lemma~\ref{lem:bernprop1} now gives us a set of linear inequalities
that hold between these variables $z_{I,\delta}$. Therefore, we obtain
three LP relaxations of increasing precision using Bernstein
polynomials. Once again, let $(b_{I,\delta})_{I\le \delta}$ denote
Bernstein coefficients of $p$ with respect to a maximal degree
$\delta$. We formulate three LP relaxations, each providing a better
approximation for the feasible region of the original
problem~\eqref{eq:POP}.

The first relaxation uses the fact that $B_{I,\delta}(\vx) \geq 0$ for all $\vx \in [0,1]^n$ and that
$\sum_{I \leq \delta} B_{I,\delta} \equiv 1$.
\begin{equation}
\label{eq:lpbern1}
\begin{array}{rllr}
{p_\delta}^{(1)}=&\text{minimize} & \displaystyle{\sum_{I\le \delta} {b}_{I,\delta} z_{I,\delta} }\\
&\text{s.t} & z_{I,\delta}\in \mathbb{R},\; & I\le \delta, \\
&&   z_{I,\delta} \ge 0,  & I\le \delta, \\
&& \displaystyle{\sum_{I\le \delta} z_{I,\delta} =1},
\end{array}
\end{equation}
From corollary~\ref{cor:bernbound}, it is easy to see that
$p_{\delta}^{(1)}=\displaystyle{\min_{I\le \delta} b_{I,\delta}}$ (minimal of Bernstein coefficients for
$p$). The optimization is superfluous here, but will be useful subsequently.

Next, we incorporate sharper bounds for $B_{I,\delta}(\vx)$ for each
$I$ for $\vx \in K$.
 \begin{equation}
\label{eq:lpbern2}
\begin{array}{rllr}
{p_\delta}^{(2)}=&\text{minimize} & \displaystyle{\sum_{I\le \delta} b_{I,\delta} z_{I,\delta} }\\
&\text{s.t} & z_{I,\delta}\in \mathbb{R},\; & I\le \delta, \\
&&  0 \le z_{I,\delta}\le B_{I,\delta}(\frac{I}{\delta}),  & I\le \delta, \\
&& \displaystyle{\sum_{I\le \delta} z_{I,\delta} =1},
\end{array}
\end{equation}

Finally, the recurrence relation between the polynomials are expressed
as equations in the LP relaxation to constrain the relaxation even
further.
%
%\begin{equation}
%\label{eq:lpbern3}
%\begin{array}{rllr}
%{p_\delta}^{(3)}=&\text{minimize} & \displaystyle{\sum_{I\le \delta} b_{I,\delta} z_I }\\
%&\text{s.t} & z_{I,\delta_r-p}\in \mathbb{R},\;  I\le \delta_{r,-p},\; r=1,\dots,n, \; p=0,\dots, \delta_r \\
%&&  0 \le z_{I,\delta_r-p} \le B_{I,\delta_r-p}(\frac{I}{\delta}), \; I\le \delta, \\
%&& \displaystyle{\sum_{I\le \delta} z_{I,\delta_r-p}  =1},\\
%&& z_{I,\delta_{r,-p}}=\alpha_{r,p}(I,\delta)z_{I,\delta_{r,-p+1}}+\beta_{r,p}(I,\delta)z_{I_{r,1},\delta_{r,-p+1}},\\ 
%&& I\le \delta_{r,-p}, \; r=1,\dots,n, \; p=1,\dots, \delta_r .
%\end{array}
%\end{equation}
%where  $\alpha_{r,p}(I,\delta)=\frac{\delta_r-p+1-i_r}{\delta_r-p+1}$ and  $\beta_{r,p}(I,\delta)=\frac{i_r+1}{\delta_{r,-p+1}}$ for all $r=1,\dots,n$ and $p=1,\dots, \delta_r$.\\

\begin{equation}
\label{eq:lpbern3}
\begin{array}{rllr}
{p_\delta}^{(3)}=&\text{minimize} & \displaystyle{\sum_{I\le \delta} b_{I,\delta} z_{I,\delta} }\\
&\text{s.t} & z_{I,\delta}\in \mathbb{R},\;  I\le \delta, \\
&& z_{J,\delta'}\in \mathbb{R},\;   J\le \delta', \; \delta'< \delta,\\
&&  0 \le z_{I,\delta} \le B_{I,\delta}(\frac{I}{\delta}), \; I\le \delta, \\
&& 0 \le z_{J,\delta'} \le B_{J,\delta'}(\frac{J}{\delta'}), \; J\le \delta', \; \delta'< \delta,\\
&& \displaystyle{\sum_{I\le \delta} z_{I,\delta}  =1},\\
&& \displaystyle{\sum_{J\le \delta'} z_{J,\delta'}  =1},\; \delta' < \delta,\\
&& z_{J,\delta'}=\frac{{\delta'}_r-j_r}{{\delta'}_r}z_{J,{\delta'}_{r,1}}+\frac{j_r+1}{{\delta'}_{r}}z_{J_{r,1},{\delta'}_{r,1}},\; J\le \delta', \; \delta' < \delta.
\end{array}
\end{equation}

Relaxation (\ref{eq:lpbern1}) is obtained once the unit partition and
the positivity of Bernstein polynomials (Lemma~\ref{lem:bernprop1})
are injected.  In relaxation (\ref{eq:lpbern2}) we add lower bounds on
polynomials $B_{I,\delta}(y)$ (given by the second property of
Lemma~\ref{lem:bernprop1}) which allow us to obtain a more precise
result (greater) since we are adding more constraints for the previous
minimization problem. The third one (\ref{eq:lpbern3}) is obtained by
adding new variables for Bernstein polynomials of lower degree and
exploiting the induction property of Lemma~\ref{lem:bernprop1}. It
will be the more precise one but also the more costly.

We will show using the properties of Bernstein polynomials that
each of these relaxations provides a lower bound on the original
polynomial optimization problem. 
\begin{proposition}\label{prop:lpbern-relation}
 ${p_{\delta}}^{(1)} \le {p_{\delta}}^{(2)} \le {p_{\delta}}^{(3)} \le p^*$ where $p^*$ is the optimal value of (\ref{eq:POP}).
\end{proposition}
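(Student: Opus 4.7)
The plan is to prove the chain in two stages. The first two inequalities $p_\delta^{(1)} \le p_\delta^{(2)} \le p_\delta^{(3)}$ should follow from a simple \emph{feasible region containment} argument, while the final inequality $p_\delta^{(3)} \le p^*$ requires exhibiting, for every $\vx \in K$, a corresponding feasible point of the LP~\eqref{eq:lpbern3} whose objective value equals $p(\vx)$.

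For the first step, I would compare the three feasible sets directly. Every constraint appearing in~\eqref{eq:lpbern1} (nonnegativity and unit partition for the degree-$\delta$ variables) is also present in~\eqref{eq:lpbern2}, which only adds the upper bounds $z_{I,\delta} \le B_{I,\delta}(I/\delta)$. Likewise,~\eqref{eq:lpbern3} retains every constraint of~\eqref{eq:lpbern2} on the variables $z_{I,\delta}$, merely introducing additional variables $z_{J,\delta'}$ for $\delta' < \delta$ together with their own bounds, unit partitions, and induction equations. Since all three problems minimize the same linear objective $\sum_{I\le\delta} b_{I,\delta} z_{I,\delta}$, tightening the feasible set can only increase the optimal value, giving the first two inequalities. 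The projection onto the $z_{I,\delta}$ coordinates is the relevant notion of containment between~\eqref{eq:lpbern3} and~\eqref{eq:lpbern2}, and I would state this explicitly to avoid confusion.

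For the inequality $p_\delta^{(3)} \le p^*$, the key is the following construction: given any $\vx \in K = [0,1]^n$, set
\[
z_{I,\delta} := B_{I,\delta}(\vx), \qquad z_{J,\delta'} := B_{J,\delta'}(\vx) \text{ for all } \delta' < \delta, \ J \le \delta'.
\]
I would then verify each class of constraints using Lemma~\ref{lem:bernprop1}: nonnegativity and the sharp upper bound $z_{I,\delta} \le B_{I,\delta}(I/\delta)$ come from property (2); the unit partition constraints come from property (1); and the induction equations are exactly property (3) evaluated at $\vx$. Thus this assignment is feasible for~\eqref{eq:lpbern3}, and its objective value is
\[
\sum_{I\le \delta} b_{I,\delta} z_{I,\delta} = \sum_{I\le \delta} b_{I,\delta} B_{I,\delta}(\vx) = p(\vx),
\]
using the Bernstein expansion of $p$. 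Applying this with $\vx$ chosen as (or approaching) the minimizer of~\eqref{eq:POP} yields $p_\delta^{(3)} \le p(\vx) = p^*$.

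The only mildly subtle step is the induction property: one must check that when the LP variable $z_{J,\delta'}$ is interpreted as $B_{J,\delta'}(\vx)$, the equation $z_{J,\delta'} = \frac{\delta'_r - j_r}{\delta'_r}\,z_{J,\delta'_{r,1}} + \frac{j_r+1}{\delta'_r}\,z_{J_{r,1},\delta'_{r,1}}$ matches the identity in Lemma~\ref{lem:bernprop1}(3) after aligning the shifted multi-indices and degrees correctly. This is purely notational and poses no real obstacle. The rest of the argument is direct, so I expect no hard technical step; the proposition is essentially a bookkeeping consequence of the properties already established for Bernstein polynomials.
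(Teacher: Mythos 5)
Your proposal is correct and follows essentially the same route as the paper: both arguments establish the chain $p_\delta^{(1)} \le p_\delta^{(2)} \le p_\delta^{(3)}$ by noting that the three LPs share the same objective while each successive formulation only adds constraints (modulo projecting out the extra variables in~\eqref{eq:lpbern3}), and both establish the bound against $p^*$ by plugging $z_{I,\delta} := B_{I,\delta}(\vx)$ (and $z_{J,\delta'} := B_{J,\delta'}(\vx)$) for a feasible $\vx \in K$ and checking feasibility via Lemma~\ref{lem:bernprop1}. Your write-up is a bit more explicit about which property of the lemma verifies which constraint, and you only prove the $\le p^*$ bound for the tightest relaxation and let the chain deliver the rest, whereas the paper checks it for all three at once, but these are cosmetic differences and the underlying argument is identical.
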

\begin{proof}
%%By writing $p$ in its Bernstein basis, the problem (\ref{eq:POP}) will be the following :

% Now by replacing polynomials $B_{I,\delta}(y)$ with decision variables $z_I$, we obtain :
% $$
% \begin{array}{llr}
% \text{minimize} & \displaystyle{\sum_{I\le \delta} b_{I,\delta} B_{I,\delta}(y)} \\
% \text{s.t} & y\in [0,1]^n. \\
% & z_I=B_{I,\delta}(y),\; \forall I \le \delta. \\ 
% \end{array}
% $$
% The relaxations will be obtained then by removing the nonlinearities
% caused by terms $z_I=B_{I,\delta}(y)$ and injecting instead properties
% on those Bernstein polynomials.

First, consider any feasible solution $y$ to the problem~\eqref{eq:POP} 
 $$
 \begin{array}{llr}
 \text{minimize} & \displaystyle{\sum_{I\le \delta} b_{I,\delta} B_{I,\delta}(y)} \\
 \text{s.t} & y\in [0,1]^n. \\
 \end{array}
 $$

 We note that replacing $z_{I} = B_{I,\delta}(y)$ the vector of all
 $z_I$s form a feasible solution to each of the three relaxations~\cref{eq:lpbern1,eq:lpbern2,eq:lpbern3}. Therefore,
$p_{\delta}^{(j)} \le p^*$ for $j \in \{1,2,3\}$.

Next considering the formulations
~\cref{eq:lpbern3,eq:lpbern2,eq:lpbern1}, we note that their
objective functions are the same.  Furthermore, the decision variables
in ~\cref{eq:lpbern2,eq:lpbern1} are the same; while the decision variables
in~\cref{eq:lpbern2} are a strict subset of those in~\cref{eq:lpbern3}.
Next, each constraint in~\cref{eq:lpbern1} is present in~\cref{eq:lpbern2},
and likewise, each constraint in~\cref{eq:lpbern2} is also present in~\cref{eq:lpbern3}.

As a consequence, any
feasible solution for $p_{\delta}^{(3)}$ is, in turn, a feasible
solution for $p_{\delta}^{(2)}$ with the extra variables in
formulation~\eqref{eq:lpbern3} removed.  Therefore, since the
objectives coincide and we seek to minimize, we have $p_{\delta}^{(2)}
\leq p_{\delta}^{(3)}$. Similarly, any feasible solution for
Eq.~\eqref{eq:lpbern2} is, in turn, a feasible solution for 
Eq.~\eqref{eq:lpbern1}. Here, no projection is needed, since
the two LPs consider the same set of variables. Once again,
we have $p_{\delta}^{(1)} \leq p_{\delta}^{(2)}$. Putting it all
together, we have 
${p_{\delta}}^{(1)} \le {p_{\delta}}^{(2)} \le {p_{\delta}}^{(3)} \le p^*$.

\end{proof}

\begin{remark}
\label{rmq;firstlevel}
The choice of the appropriate relaxation is a tradeoff between
complexity and precision. In fact, the third relaxation
(\ref{eq:lpbern3}) which gives the more precise result, can be very
expensive, especially when the number of variables and/or their degrees
increase. 

Finally,~\cref{eq:lpbern3} can be used for a \emph{fixed level} to
alleviate the drastic increase in the number of decision
variables. I.e, instead of exploiting all the constraints arising for
the degrees $\delta' < \delta$ we may restrict ourselves to $\delta'$
such that $\delta'_{r}=\delta_{r}-1$.
\end{remark}

\section{Comparison between Representations}\label{Sec:comparisons-relaxations}
In this section, we will first start by comparing linear and SOS representations.
Next, we compare the new Bernstein relaxations with other existing linear relaxations
including Handelman and interval representations.

\subsection{Comparison between Linear and SOS representations}
Comparing the presentations of linear vs. SOS representations,
the tradeoffs look quite obvious. Whereas linear representations
produce linear programs, that can be solved using exact arithmetic,
SOS representations produce sum-of-squares programs that are
solved numerically by relaxation to semi-definite programs.  In fact,
numerical issues sometimes arise, and have been noted in our previous
work~\cite{Sriram2013}. On the other hand,
it also seems that Handelman representations may be weaker than
Putinar representations. Consider the example below:

\begin{proposition}\label{prop:square-polynomial-fail}
The polynomial $p(x): x^2$ does not have a Handelman representation inside the 
interval $[-1,1]$.
\end{proposition}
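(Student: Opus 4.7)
The plan is a short contradiction argument using evaluation at a single strategic point. Assume, for the sake of contradiction, that $p(x) = x^2$ admits a Handelman representation over $[-1,1]$. Since $K = [-1,1]$ is described by the two constraints $P = \{\,1+x,\; 1-x\,\}$, any such representation with degree bound $D$ would take the form
$$x^2 \;\equiv\; \sum_{0 \le i,\, j \le D} \lambda_{i,j}\,(1+x)^i(1-x)^j, \qquad \lambda_{i,j} \ge 0.$$

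The key step is to evaluate this identity at $x=0$, a point that lies in the interior of $[-1,1]$. On the left-hand side, $p(0)=0$. On the right-hand side, every generator $(1+x)^i(1-x)^j$ collapses to $1$, so the expression reduces to $\sum_{i,j} \lambda_{i,j}$. The nonnegativity of each $\lambda_{i,j}$, combined with the requirement that this sum equal zero, forces $\lambda_{i,j}=0$ for every pair $(i,j)$. But then the right-hand side is the zero polynomial, contradicting $x^2 \not\equiv 0$.

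There is no real obstacle in the proof itself; the argument is essentially the one-line observation that a nonnegative combination of functions that are strictly positive on $(-1,1)$ cannot vanish at any interior point unless every coefficient is zero. The proposition is conceptually important, however, because it pinpoints the precise reason Handelman representations cannot certify nonnegativity of polynomials that vanish in the interior of $K$ — explaining the strict positivity hypothesis in Theorem~\ref{theo:handel} and motivating the need for SOS-based (Putinar or Schm\"udgen) certificates whenever the target polynomial has interior zeros, as is typical for Lyapunov candidates that vanish at the equilibrium.
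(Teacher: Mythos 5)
Your proof is correct and uses essentially the same argument as the paper: evaluate the putative Handelman identity at $x=0$, where $x^2$ vanishes but every generator $(1+x)^i(1-x)^j$ equals $1$, forcing all nonnegative multipliers to be zero and hence a contradiction. The paper phrases it slightly differently (assuming the $\lambda_j$ are strictly positive in a nontrivial combination), but the evaluation-at-an-interior-zero idea is identical.
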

\begin{proof}
 Suppose we were able to express
$x^2$ as a (non-trivial) conic combination of power products of the form
\[ x^2 \equiv \sum\limits_{j=1}^m \lambda_j (1 - x)^{n_j} (x+1)^{m_j}\,,\ \lambda_j > 0\]
We note that at $x = 0$, the LHS is zero whereas the RHS is strictly
positive.  This implies that $\lambda_j = 0$ for $j=1,\ldots,m$. Thus,
the RHS is identically zero, yielding a contradiction.
\end{proof}

On the other hand, the polynomial $x^2$ is SOS, and thus trivially
shown to be positive over $[-1,1]$ (if not over $\reals$) by a Putinar
representation.

%However, consider the set $K: \{ x \geq 0, y \geq 0 \}$ and the
%polynomial $p(x,y): xy$. It is easy to show that $p(x,y) \geq 0$ over
%$K$ by means of an Handelman representation. On the other hand,
%$p(x,y)$ does not to have a Putinar representation of the form~\cref{Eq:putinar-representation}.

%\begin{proposition}
%The polynomial $p(x,y): xy$ does not have a representation of the form
%\[ xy \equiv q_0 + q_1 x + q_2 y,\ \mbox{where}\ q_0,q_1,q_2\ \mbox{are SOS}\,.\]
%\end{proposition}
%
%This is a corollary to the following more general result:
%%~\ref{prop:putinar-xy-fail}
%%(page~\pageref{prop:putinar-xy-fail}).
%
%
%
%It is well known that not every positive (semi-) definite polynomial
%can be expressed through a SOS decomposition. Nevertheless, the SOS
%decomposition has been seen to be quite powerful in practice. However,
%problems arise when a polynomial $p(\vx)$ is to be proven positive
%inside a set $K$. In such situations, the Putinar representation
%described in Section~\ref{sec:putinar-representation} is applied even
%when the conditions for Theorem~\ref{theo:putin} do not apply i.e we try to 
%search for a Putinar representation for our polynomial without being sure of 
%its existence since once it is found positivity is guaranteed.  As
%noted earlier, the application of this approach to Lyapunov function
%synthesis has been studied by Papachristadoulou and
%Prajna~\cite{Papachristodoulou+Prajna/2002/Construction} and a related
%approach using LMIs by Tibken~\cite{Tibken/2000/Estimation}.
%
However, in such a situation, we can show that Handelman representations can 
be useful in showing positivity where Putinar representations can fail.
Consider the set $K: [0,1]\times[0,1]$ and the bivariate polynomial
$p(x,y) = xy$. 
\begin{proposition}\label{prop:putinar-xy-fail}
There do not exist SOS polynomials $q_0, q_1, q_2, q_3, q_4$ such that 
\[ xy \equiv q_0 + q_1 x + q_2 y + q_3 (1-x) + q_4 (1-y) \,.\]
In other words, the polynomial $xy$ does not have a Putinar
representation over the unit box $K: [0,1]\times [0,1]$.
\end{proposition}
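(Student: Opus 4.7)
The plan is to evaluate the putative identity on the two ``degenerate'' edges of the unit box, namely $y=0$ and $x=0$, where the LHS vanishes. Since every $q_i$ is SOS (hence nonnegative everywhere) and the multipliers $x,\ 1-x,\ y,\ 1-y$ are nonnegative on $[0,1]^2$, forcing the whole RHS to equal $0$ along these edges pins down each individual summand to zero, which then feeds into a divisibility/coefficient-counting argument.

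First I would substitute $y=0$ into the alleged identity to obtain
\[ 0 \equiv q_0(x,0) + q_1(x,0)\, x + q_3(x,0)\,(1-x) + q_4(x,0). \]
On $x\in[0,1]$ each of these four summands is nonnegative, so each vanishes identically on $[0,1]$ and therefore as a polynomial in $x$. This gives $q_0(x,0)\equiv q_1(x,0)\equiv q_3(x,0)\equiv q_4(x,0)\equiv 0$, i.e.\ $y$ divides each of $q_0,q_1,q_3,q_4$. Now I would invoke the standard ``SOS doubling'' fact: if $q=\sum_k r_k^2$ is SOS and $y\mid q$, then setting $y=0$ yields $\sum_k r_k(x,0)^2=0$, hence each $r_k(x,0)=0$, hence $y\mid r_k$, hence $y^2\mid r_k^2$, hence $y^2\mid q$. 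Applying this to each of $q_0,q_1,q_3,q_4$ yields $y^2\mid q_0,q_1,q_3,q_4$.

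Symmetrically, substituting $x=0$ and running the same argument gives $x^2\mid q_0,q_2,q_3,q_4$. In particular, $q_0$ and $q_3,q_4$ are each divisible by $x^2 y^2$, while $q_1$ is divisible by $y^2$ and $q_2$ is divisible by $x^2$.

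Finally, I would compare the coefficient of the monomial $xy$ on both sides. On the LHS it is $1$. On the RHS, after accounting for the multipliers, every summand contains $x^2$ or $y^2$ as a factor: $q_0$ is divisible by $x^2y^2$; $q_1 x$ by $x y^2$; $q_2 y$ by $x^2 y$; $q_3(1-x)$ and $q_4(1-y)$ by $x^2 y^2$. None of these terms can contribute a monomial $xy$, so the coefficient of $xy$ on the RHS is $0$, contradicting the LHS. I expect the only subtlety is the clean statement of the SOS-doubling lemma; the rest is bookkeeping on monomials.
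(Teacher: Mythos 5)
Your proof is correct, but it takes a genuinely different route from the paper's. The paper evaluates the identity only at the corner $(0,0)$, then propagates zeros degree by degree: first constant terms, then (via the fact that a psd polynomial vanishing at a point must vanish to second order) linear terms, then $x^2$ and $y^2$ coefficients; the contradiction is finally obtained by evaluating some $q_j$ at $(\epsilon,-\epsilon)$ and showing it goes negative. You instead evaluate along the entire edges $y=0$ and $x=0$, getting the much stronger conclusion that $y\mid q_0,q_1,q_3,q_4$ and $x\mid q_0,q_2,q_3,q_4$, then apply the ``SOS doubling'' lemma in its form over a hypersurface to upgrade these to $y^2$ and $x^2$ divisibility, and conclude purely by counting $xy$-monomials. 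What your approach buys is that the whole argument becomes a single divisibility statement plus trivial bookkeeping: you avoid the paper's term-by-term coefficient chasing across degrees $0,1,2$, and you avoid the final local psd evaluation at $(\epsilon,-\epsilon)$ with its attendant care about signs (the paper's claim that $\mathrm{coeffOf}(q_j,xy)>0$, rather than merely nonzero, for some $j$ needs a short justification). What the paper's approach buys in exchange is that it uses only the pointwise psd property and a local second-order argument at the origin, not the full SOS decomposition $q=\sum r_k^2$; your doubling lemma genuinely uses the SOS structure, though of course that is given in the hypothesis. Both are valid; yours is the cleaner write-up of the same underlying obstruction.
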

\begin{proof}
Suppose, for contradiction, there exist SOS polynomials $q_0, q_1, q_2, q_3, q_4$ such that 
\begin{equation}\label{eq:sos-forbidden}
xy \equiv q_0 + q_1 x + q_2 y + q_3 (1-x) + q_4 (1-y) \,.
\end{equation}

We establish a contradiction by considering the lowest degree terms of
the polynomials $q_0, \ldots,q_4$. We use the notation
$\coeffOf(q,x^iy^j)$ to refer to the coefficient of the monomial
$x^iy^j$ in the polynomial $q(x,y)$.

First, plugging in $x=0, y=0$, we observe that $\coeffOf(q_0, 1)
= \coeffOf(q_3,1) = \coeffOf(q_4,1) = 0$. In other words, the
constant coefficients of $q_0, q_3, q_4$ are zero.

Since $q_0, q_3, q_4$ are psd, if they have zero constant terms then 
they do not have linear terms involving $x$ or $y$.
\[ \coeffOf(q_j,x) = \coeffOf(q_j,y)= \coeffOf(q_j,1) = 0,\ j \in \{ 0,3,4\}\,.\]
Therefore, the constant terms of $q_1, q_2$ are zero as well:
\[ \coeffOf(q_1,1) = \coeffOf(q_2,1) = 0 \,.\]
Otherwise, the RHS will have non-zero terms involving $x,y$ but the LHS has no such terms.
Once again, from the positivity of $q_1,q_2$, we have
\[ \coeffOf(q_j,x) = \coeffOf(q_j,y) = 0,\ j \in \{ 1,2 \} \]
Having established that no constant or linear terms can exist for $q_0, \ldots, q_4$,
we turn our attention to the quadratic terms $x^2, y^2, xy$.  Consider the
coefficients of $x^2$ on both sides of Eq.~\eqref{eq:sos-forbidden}:
\[ \coeffOf(q_0,x^2) + \coeffOf(q_3,x^2) + \coeffOf(q_4,x^2) = 0,\   \coeffOf(q_0,y^2) + \coeffOf(q_3,y^2) + \coeffOf(q_4,y^2) = 0 \,. \]
Since $q_i$ are psd and lack constant/linear terms, we can show that
\[ \coeffOf(q_j,x^2) \geq 0,\ \coeffOf(q_j,y^2) \geq 0,\ j \in \{0,\ldots,4\} \]
Therefore, we conclude that
\[ \coeffOf(q_j,x^2) = \coeffOf(q_j,y^2) = 0,\ j \in \{ 0,3,4\} \,.\]
Finally, we compare $xy$ terms on both sides of Eq.~\eqref{eq:sos-forbidden} to obtain:
\[ \coeffOf(q_0, xy) + \coeffOf(q_3,xy) + \coeffOf(q_4,xy) = 1 \,.\]
Therefore, we have $\coeffOf(q_j,xy) >  0$ for some $j \in \{ 0,3,4\}$,
while $\coeffOf(q_j,x^2) = \coeffOf(q_j,y^2) = 0$. 
We now contradict the assumption that $q_j$ is psd. Based on what we have shown thus far, we can write
\[ q_j(x,y) = c xy + \mbox{third or higher order terms},\ \mbox{where}\ c >  0\,. \]
Let us fix $x=\epsilon, y = - \epsilon $ for some $\epsilon > 0$.
\[ q_j(\epsilon,-\epsilon )= - c_0 \epsilon^2 + o(\epsilon^3) \]
Therefore, we conclude for $\epsilon$ small enough,
$q_j(\epsilon,-\epsilon) < 0$, thus contradicting the positivity
assumption for $q_j$ for some $j \in \{ 0,3,4\}$. 

\end{proof}

%
%As a result, we conclude the existence of situations where linear
%relaxations following Handelman or Bernstein representations can
%provide proofs of positivity, where Putinar representations can fail.

Furthermore, the SOS relaxation to SDP relies on numerical interior point solvers
to find a feasible point. From the point of view of a \emph{guaranteed
method}, such an approach can be problematic. On the other hand, LP
solvers can use exact arithmetic in spite of the high cost of doing
so, to obtain results that hold up to verification. Examples of
numerical issues in SOS programming for Lyapunov function synthesis
are noted in our previous work~\cite{Sriram2013}, and will not be
reproduced here. Consequently, much work has focused on the problem of
finding rational feasible points for sum-of-squares to generate
polynomial positivity proofs that in exact
arithmetic~\cite{Harrison/2007/Verifying,Platzer+Quesel+Rummer/2009/Real,Monniaux+Corbineau/2011/Generation}.
Recently, a self-validated SDP solver VSDP has been proposed by Lange
et al.~\cite{Jannson/VSDP}.  However, its application to SOS
optimization has not been investigated.

\subsection{Comparison of Bernstein relaxations with other Linear Representations}
We are going to compare the Bernstein
relaxations~\cref{eq:lpbern3,eq:lpbern2,eq:lpbern1} with other
existing linear relaxations including Handelman and interval LP
relaxations. More precisely, we will show the benefit of using
Bernstein relaxations instead of the LP relaxations given by Ratschan et
al.~\cite{Ratschan+She/2010/Providing} and our previous
work~\cite{Sriram2013}.

Our earlier work~\cite{Sriram2013}, uses RLT with a combination of
Handelman representation augmented by interval arithmetic constraints
to prove polynomial positivity, as a primitive for Lyapunov function
synthesis. As long as the domain $K$ of interest is a hyper-rectangle,
the relaxations provided by Bernstein polynomials will provide results
that are guaranteed to be at least as good, if not strictly better.
For simplicity, let us fix $K$ as the unit box $[0,1]^n$ and compare
the two relaxations.

A first remark will be that the Handelman representation contains
polynomials with degree less or equal to the fixed degree $\delta$,
whereas our Bernstein polynomials are all of degree equal to
$\delta$. This does not affect the optimal value of the relaxation, as
noted by Sherali and Tuncbilek~\cite{sherali91}. Therefore, no gain of
precision can be made using the Handelman relaxation thanks to the
additional polynomials of degree less than $\delta$.  

\begin{lemma}[Bernstein vs. Handelman Representations]
Let $K: [0,1]^n$ represent the unit interval. Any polynomial
that can be shown nonnegative over $K$ using a Handelman representation of degree $\delta$ 
can also be shown nonnegative using the formulation in ~\cref{eq:lpbern1} with the same degree.
\end{lemma}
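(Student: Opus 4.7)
The plan is to show that if $p$ admits a degree-$\delta$ Handelman representation over $K=[0,1]^n$, then all Bernstein coefficients $b_{I,\delta}$ of $p$ at the same degree $\delta$ are nonnegative. By Corollary~\ref{cor:bernbound}, the optimal value of~\eqref{eq:lpbern1} equals $\min_{I\le\delta} b_{I,\delta}$, so nonnegativity of every $b_{I,\delta}$ immediately yields $p_\delta^{(1)}\ge 0$, certifying nonnegativity of $p$ on $K$ through the first Bernstein LP relaxation.

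The first ingredient is the observation that over $K=[0,1]^n$ the defining polynomials are $\{x_1,\ldots,x_n,1-x_1,\ldots,1-x_n\}$, so every Handelman power product has the form $\vx^A(1-\vx)^B$ for multi-indices $A,B$. Such a generator is, up to the positive binomial factor $\binom{A+B}{A}^{-1}$, exactly the Bernstein polynomial $B_{A,A+B}(\vx)$ of multi-degree $A+B$. Thus each generator used in the Handelman representation is a positive scalar multiple of a Bernstein basis element of some degree $\delta'\le\delta$.

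The second ingredient is the classical degree-elevation identity for Bernstein polynomials: for any $\delta'\le\delta$ and any $I\le\delta'$,
\[
B_{I,\delta'}(\vx) \;=\; \sum_{J:\, I\le J\le I+(\delta-\delta')} \frac{\binom{\delta'}{I}\binom{\delta-\delta'}{J-I}}{\binom{\delta}{J}}\, B_{J,\delta}(\vx),
\]
where every coefficient is strictly positive. Applying this identity to each generator $B_{A,A+B}$ rewrites $\vx^A(1-\vx)^B$ as a nonnegative linear combination of the $\{B_{I,\delta}\}_{I\le\delta}$. Since the Handelman multipliers $\lambda_{A,B}$ are themselves nonnegative, summing over all generators expresses $p$ as $\sum_{I\le\delta} b_{I,\delta}\,B_{I,\delta}$ with $b_{I,\delta}\ge 0$ for every $I\le\delta$. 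By uniqueness of the Bernstein expansion of $p$, these are precisely the coefficients appearing in the LP~\eqref{eq:lpbern1}, and the argument is complete.

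The main technical point to be careful about is matching the notion of ``degree~$\delta$'' across the two representations. A Handelman representation of (vector) degree $\delta$ bounds the exponents $A,B$ componentwise, and the degree-elevation step above requires $A+B\le\delta$ componentwise. If the underlying Handelman convention only bounds $A$ and $B$ separately by $\delta$, the same argument still applies provided the Bernstein relaxation~\eqref{eq:lpbern1} is instantiated at the (larger) degree $2\delta$; this only strengthens the conclusion, since moving to a higher Bernstein degree can only refine the relaxation. Hence the Bernstein relaxation~\eqref{eq:lpbern1} subsumes the Handelman certificate at the corresponding degree.
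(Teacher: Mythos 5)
Your proof is correct and arrives at the same endpoint as the paper's argument: over $K=[0,1]^n$, a degree-$\delta$ Handelman representation forces every Bernstein coefficient $b_{I,\delta}$ of $p$ to be nonnegative, and since the optimum of~\eqref{eq:lpbern1} is precisely $\min_{I\le\delta} b_{I,\delta}$, that LP then certifies nonnegativity. Where you diverge is in the treatment of power products of degree strictly below $\delta$. The paper dispatches these by citing (in the paragraph just before the lemma) Sherali and Tuncbilek's observation that including lower-degree products does not change the relaxation, and then simply compares~\eqref{eq:lpbern1} to a ``Handelman LP'' $p_H$ obtained by dropping the unit-partition constraint, noting $p_H\le p_\delta^{(1)}$ because the latter has one more constraint. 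You instead carry out the reduction from scratch using the degree-elevation identity for Bernstein polynomials, rewriting each $B_{I,\delta'}$ with $\delta'\le\delta$ as a nonnegative combination of $\{B_{J,\delta}\}_{J\le\delta}$, so that the \emph{full} Handelman certificate (not just its top-degree part) produces nonnegative $b_{I,\delta}$'s via uniqueness of the Bernstein expansion. This is more self-contained and makes the positivity of the elevation coefficients do the work explicitly; you also surface a bookkeeping point about what ``degree $\delta$'' bounds in a Handelman representation (componentwise $A+B$ versus $A$ and $B$ separately), which the paper leaves implicit. Both routes are sound; yours buys an elementary, complete argument, while the paper's is shorter at the cost of an external citation for the degree reduction.
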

\begin{proof}
We first note that Handelman representation for $p$ seeks to express
$p$ as 
\[ p \equiv \sum_{I \leq \delta} \lambda_I \underset{B_{I,\delta}}{\underbrace{\vx^{I} (1-\vx)^{\delta-I}}} \,.\]
In fact, over the unit interval, the Handelman representation seeks to write $p$ as a conic
combination of Bernstein polynomials. 
Consider the relaxation to the POP:
\[ \min_{\vx \in K}\ p(\vx) \]
Using a Handelman representation of $p(\vx)$, we obtain the following relaxation:
\[\begin{array}{rllr}
{p_H}=&\text{minimize} & \displaystyle{\sum_{I\le \delta} {b}_{I,\delta} z_{I,\delta} }\\
&\text{s.t} & z_{I,\delta}\in \mathbb{R},\; & I\le \delta, \\
&&   z_{I,\delta} \ge 0,  & I\le \delta, \\
%%&& \displaystyle{\sum_{I\le \delta} z_{I,\delta} =1},
\end{array} \]

In contrast, we compare this  to the formulation~\cref{eq:lpbern1}, recalled below:
\[\begin{array}{rllr}
{p_\delta}^{(1)}=&\text{minimize} & \displaystyle{\sum_{I\le \delta} {b}_{I,\delta} z_{I,\delta} }\\
&\text{s.t} & z_{I,\delta}\in \mathbb{R},\; & I\le \delta, \\
&&   z_{I,\delta} \ge 0,  & I\le \delta, \\
&& \displaystyle{\sum_{I\le \delta} z_{I,\delta} =1},
\end{array} \]
Comparing the two LPs, it is easy to see that $p_H \leq p_{\delta}^{(1)} $. Also, 
if $p_H \geq 0$ then $p_{\delta}^{(1)} \geq 0$. Therefore, the result follows.
\end{proof}

\paragraph{Comparison with Interval Representations:} 
Now, we compare Bernstein relaxation to the interval relaxation over
$[0,1]^n$.  Interval relaxations are presented by Ratschan et
al.~\cite{Ratschan+She/2010/Providing} and in our previous
work~\cite{Sriram2013}. Notably, let $K$ be a hyper-rectangular
domain. The interval relaxation replaces each monomial $\vx^{I}$ with
an interval over $K$.  The interval for a polynomial $p$ is obtained
by summing up the interval for each term. While there exist many
approaches to evaluate a polynomial over an interval, we will consider
the scheme (implicitly) adopted by Ratschan et
al.~\cite{Ratschan+She/2010/Providing} and in our previous
work~\cite{Sriram2013} that uses an interval arithmetic based LP
relaxation for (parametric) polynomial optimization problems. Here we
will fix $K: [0,1]^n$, mapping arbitrary, bounded hyper-rectangles to
this domain through a linear change of variables (see
Section~\ref{Sec:transform-coordinates}).

\begin{lemma}
A polynomial $p: \sum_{I \leq \delta} c_I \vx^I$ can be shown to be non-negative over $[0,1]^n$ using
interval arithmetic if all its coefficients $c_I \geq 0$.
\end{lemma}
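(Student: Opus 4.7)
The plan is to unwind the definition of the interval arithmetic relaxation referenced in the statement (namely, the scheme of Ratschan et al.\ and our previous work) and show that each summand contributes a non-negative interval, whence the sum does as well.

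First, I would recall that the interval relaxation evaluates a polynomial $p(\vx) = \sum_{I \leq \delta} c_I \vx^I$ over a hyper-rectangle by computing, for each multi-index $I$, an interval enclosure $[\ell_I, u_I]$ of the monomial $\vx^I$ on the rectangle, and then summing the term-wise enclosures $c_I \cdot [\ell_I, u_I]$. The conclusion that $p \geq 0$ is drawn whenever the lower bound of this resulting interval is non-negative.

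Next, I would observe that on $K = [0,1]^n$, each coordinate satisfies $x_j \in [0,1]$, hence $x_j^{i_j} \in [0,1]$ for every $i_j \geq 0$, and so $\vx^I = \prod_{j=1}^n x_j^{i_j} \in [0,1]$ for every $I$. Consequently the natural interval enclosure of $\vx^I$ over $K$ is (contained in) $[0,1]$, i.e.\ $\ell_I \geq 0$ and $u_I \leq 1$. Multiplying by a non-negative coefficient $c_I \geq 0$ yields the term-wise enclosure $[0, c_I]$, whose lower bound is $0$.

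Finally, summing the term-wise intervals gives the enclosure $\bigl[\,0,\ \sum_{I \leq \delta} c_I\,\bigr]$ for $p$ over $K$. Since the lower bound is $0 \geq 0$, the interval arithmetic procedure certifies the non-negativity of $p$ on $[0,1]^n$, proving the lemma. There is no real obstacle here; the only point to be careful about is being explicit that the interval evaluation used by Ratschan et al.\ and in our previous work is indeed the term-wise scheme described above, so that the argument matches the procedure to which the statement refers.
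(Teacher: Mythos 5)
Your proof is correct and matches the paper's intent: the paper states this lemma without a formal proof, treating it as an immediate consequence of the term-wise interval scheme it has just described (``the interval for a polynomial $p$ is obtained by summing up the interval for each term''), and your argument simply makes that reasoning explicit by noting each monomial lies in $[0,1]$ over the unit box, so each term $c_I \vx^I$ has interval enclosure $[0,c_I]$ and the sum has lower bound $0$.
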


Following this, we note that if a polynomial $p: \sum_{I \leq \delta}
c_I \vx^I$ has $c_I \geq 0$, then all its Bernstein coefficients are
non-negative following~\cref{eq:bernstein-coeff}.

\begin{lemma}
  Any polynomial $p_I$ that can be shown non-negative over $K:
  [0,1]^n$ using interval arithmetic can also be shown non-negative
  using the Bernstein polynomial based formulation~\cref{eq:lpbern1}.
\end{lemma}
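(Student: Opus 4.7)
The plan is to chain together three simple observations that the paper has already lined up for me: (i) the preceding lemma, which characterizes interval-provable non-negativity in terms of sign of monomial coefficients, (ii) the formula~\eqref{eq:bernstein-coeff-formula} expressing each Bernstein coefficient as a \emph{non-negative} linear combination of monomial coefficients, and (iii) the fact noted just after~\eqref{eq:lpbern1} that its optimal value equals $\min_{I \le \delta} b_{I,\delta}$.

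First, I would invoke the preceding lemma to say: if $p(\vx) = \sum_{I\le\delta} c_I \vx^I$ is proven non-negative over $K=[0,1]^n$ by the interval arithmetic relaxation of Ratschan et al.\ and~\cite{Sriram2013}, then all of its monomial coefficients satisfy $c_I \ge 0$. Second, I would plug this into equation~\eqref{eq:bernstein-coeff-formula}, which expresses each Bernstein coefficient $b_{I,\delta}$ as $\sum_{J\le I} \binom{I}{J}/\binom{\delta}{J}\, c_J$. Since the rational weights $\binom{I}{J}/\binom{\delta}{J}$ are all strictly positive and every $c_J \ge 0$, we conclude $b_{I,\delta} \ge 0$ for every multi-index $I \le \delta$.

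Finally, I would appeal to the observation already recorded below~\eqref{eq:lpbern1} (itself a consequence of Corollary~\ref{cor:bernbound}) that the optimal value of the first Bernstein relaxation is $p_\delta^{(1)} = \min_{I\le\delta} b_{I,\delta}$. Combining this with the previous step gives $p_\delta^{(1)} \ge 0$, so the LP relaxation~\eqref{eq:lpbern1} certifies non-negativity of $p$ on $[0,1]^n$, which is exactly what was to be shown.

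There is no real obstacle here: each step is a one-line invocation of a result established earlier in the paper, and the content of the lemma is essentially that the sign information exploited by interval arithmetic on $[0,1]^n$ (namely, the sign of monomial coefficients) is fully absorbed by the positivity of the Bernstein coefficients, which is the certificate underlying~\eqref{eq:lpbern1}. The only thing worth being careful about is to state explicitly that the combinatorial weights in~\eqref{eq:bernstein-coeff-formula} are non-negative, so that non-negativity transfers from the monomial basis to the Bernstein basis.
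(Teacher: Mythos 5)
Your proof is correct and follows essentially the same route as the paper's: the paper establishes just before the lemma that non-negative monomial coefficients yield non-negative Bernstein coefficients, and its one-line proof then invokes the identity $p_\delta^{(1)} = \min_{I\le\delta} b_{I,\delta}$. You have simply spelled out all three links in that chain explicitly, including the worth-noting observation that the combinatorial weights in~\eqref{eq:bernstein-coeff-formula} are (strictly) positive.
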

\begin{proof}
It can be  shown that the optimal value of~\cref{eq:lpbern1} is in fact the minimal
Bernstein polynomial coefficient, which has to be non-negative for $p_I$.
\end{proof}

\paragraph{Comparing Bernstein Relaxations:} Note that Prop.~\ref{prop:lpbern-relation} has
already demonstrated that any polynomial that can be shown nonnegative
over the unit interval by~\cref{eq:lpbern1} can be shown nonnegative by~\cref{eq:lpbern2}.
Likewise,~\cref{eq:lpbern3} is at least as powerful as~\cref{eq:lpbern2} in this
respect. Therefore, the major advantage
of using Bernstein polynomials is that, in addition to positivity, the
three non-trivial properties of Lemma~\ref{lem:bernprop1} can be used
to add linear relationships between the decision variables in the reformulation
linearization technique.  We first demonstrate that the second
relaxation~\eqref{eq:lpbern2} is strictly more powerful than the
relaxation in~\eqref{eq:lpbern1}.
\begin{example}\label{ex1}
Consider the simple univariate polynomial below: 
%%  we will consider the simple example given in~\cite{Sriram2013}:
%% \begin{equation}
%% p(x)=x^2 \text{ over } [-1,1].
%% \end{equation}
%% For a degree $\delta=2$, Bernstein coefficients are $\{-1,1,1\}$ and then the optimal value of (\ref{eq:lpbern1}) will be ${p_{\delta}}^1=-1$ which is not precise enough to prove positivity 
%% of $p$ on $[-1,1]$. One can also prove that even for higher degrees $\delta'>2$, the result will never ensure the positivity of $p$ and it will be also the case for the Handelman 
%% representation (see~\cite{Sriram2013}). The reason behind is that $p$ is not strictly positive on [-1,1] and we have a zero in the interior of our domain so we need to decompose
%% the interval  $[-1,1]$ on zero to obtain the exact optimal value. To fix this problem in~\cite{Sriram2013}, additional positive monomials are added to the Handelman basis using interval arithmetics (in that case $x^2\ge 0$). But suppose that this example came in a different form. More precisely, if we apply the change of variable needed by the Bernstein approach
%% to be reduced to the unit box, we will found following the equivalent system:
\begin{equation}
\mbox{Show that}\ p(x):\ 4x^2-4x+1 \geq 0 \ \text{ on } [0,1].
\end{equation} 
% For this example both Handelman and interval arithmetics fail to prove
% positivity and a decomposition is required.  In fact, a simple
% transformation $y \mapsto 2 x - 1$, maps $p(x)$ to $\hat{p}(y) = y^2$,
% and the interval $\hat{K}: y \in [-1,1]$. The very same situation
% noted in proposition~\ref{prop:square-polynomial-fail} results.

For this example, we find that the relaxation (\ref{eq:lpbern2}) with
a degree $2$ computes exact optimal value ${p_\delta}^{(2)}=p^*=0$,
proving positivity of $p$ over $[-1,1]$. However, ~\eqref{eq:lpbern1} yields
a minimal value of $-1$ and fails to prove positivity on $[0,1]$.
\end{example}

Now, we demonstrate that the third relaxation~\eqref{eq:lpbern3} is
strictly more powerful through an example.
\begin{example}\label{ex2}
We will consider the following bivariate polynomial:
\begin{equation}
p(x)=x^2+y^2 \text{ on } [-1,1]^2.
\end{equation}
For a degree $\delta=(2,2)$, the optimal value of (\ref{eq:lpbern1})
is ${p_{\delta}}^{(1)}=-2$, which does not establish positivity of $p$
on $[-1,1]$. If we  use the second linear program
(\ref{eq:lpbern2}), the optimal value will be improved and we find
${p_{\delta}}^{(2)}=-0.5$, but still not sufficient to prove positivity of
$p$ on $[-1,1]$. Now, when we use the third linear program
(\ref{eq:lpbern3}), we obtain the exact optimal value
${p_{\delta}}^{(3)}=0$ and ensure the positivity of $p$ over $[-1,1]^2$.
\end{example}

\section{Synthesis of polynomial Lyapunov functions}\label{Sec:poly-lyap-synth}
%%\input{sectionn4.tex}
%%\subsection{}

Given an ODE in the form: $\frac{d\vx}{dt} = f(\vx)$ with equilibrium
$\vx^*= 0$, we wish to find a Lyapunov function $V(\vx)$ over a given
rectangular domain $R_x$ containing $0$.

\begin{note}[Positive Semi-definite vs. Positive Definite]
As presented in Section~\ref{Sec:Lyapunov-synth}, our approach fixes a
 polynomial template $V_{\vc}(x)=V(\vx,\vc)$ for the target Lyapunov
 function, and computes its Lie derivative form $V'(\vx,\vc)$. It then
 searches for coefficients $\vc$ such that $V(\vx,\vc)$ is positive
 definite over $R_x$ and $V'$ is negative definite.  We recall the
 standard approach to encoding positive definiteness, following
 Papachristodoulou \&
 Prajna~\cite{Papachristodoulou+Prajna/2002/Construction}), by
 writing $V= U + \vx^{t} \Lambda \vx$ for a positive semi-definite
 function $U$ and a diagonal matrix $\Lambda$ with small but fixed
 positive diagonal entries.   Therefore, we will focus on encoding
 positive or negative semi-definiteness and use this approach to 
 extend to positive/negative definiteness.
\end{note}

We will now demonstrate how the three LP
relaxations~\cref{eq:lpbern1,eq:lpbern2,eq:lpbern3} described
in~\cref{Sec:lp-Bernstein-relaxations} extend to search for Lyapunov
functions, wherein
\begin{enumerate}[(a)] 
\item The polynomial of interest is $V(\vx,\vc)$ parameterized by unknowns $\vc$, 
\item The interval of interest is a general box $\prod\limits_{j=1}^n\ [\ell_j,u_j]$
rather than $[0,1]^n$,
\item We wish to encode the positive semi-definiteness of $-V'(\vx,\vc)$ rather than 
$V$ itself (following the technique in~\cref{Sec:simplified-encoding}).
\end{enumerate}

\subsection{Encoding Positivity of Parametric Polynomial}

We first consider the problem of extending the LP relaxation to find
values of parameters $\vc$, such that, a parametric polynomial
$\mathcal P(\vx,\vc)$ is positive semi-definite over the interval $[0,1]^n$.

Recall, that given a known polynomial $p(\vx)$,
our first step was to write down $p(\vx)$ using its Bernstein
expansion as $p(\vx):\ \sum_{I \leq \delta} b_{I} B_{I,\delta}$. Thus,
the overall form of~\cref{eq:lpbern1,eq:lpbern2,eq:lpbern3} can be
written as
\[ \min\ \sum_{I \leq \delta} b_I z_I\ \mbox{s.t.}\ A \vec{z} \leq \vec{b} \,.\]

However, Bernstein coefficients of a parametric polynomial $\mathcal P(\vx,\vc)$ are not known in advance.
Let $\vec{m}$ denote a vector of monomials $\vx^I$ for $I \leq \delta$.
The polynomial $\mathcal P(\vx,\vc)$ can be written as $\vc^t \cdot \vm$.
Furthermore, each monomial $\vx^I$ itself has a Bernstein 
expansion:
\[ \vx^{I}:\ \sum_{J \leq \delta} b_{J,I} B_{J,\delta} \,.\]
Consider a matrix $\scr{B}$, wherein, each row corresponds to a
monomial $\vx^I$, and each column to a Bernstein polynomial
$B_{J,\delta}$. The coefficient corresponding to row $I$ and column
$J$ is $b_{J,I}$, the Bernstein coefficient for $\vx^{I}$
corresponding to $B_{J,\delta}$. Therefore, we use $\scr{B}$ to
convert polynomials from monomial to the Bernstein basis.
\[ \mathcal P(\vx,\vc):\ \vc^{t} \vm = \vc^{t} \scr{B} \vz \,,\ \mbox{wherein}\ \vz\ \mbox{represents the Bernstein polynomials}\,.\]
Therefore, the LP relaxations~\cref{eq:lpbern1,eq:lpbern2,eq:lpbern3} have the following form:
\begin{equation}\label{eq:mpt-relaxation}
\begin{aligned}
\min\  & \vc^t\  \scr{B}\ \vz \\
\mbox{s.t.}\  & A \vz\ \leq  \vb \\
\end{aligned}
\end{equation}
Equation~\eqref{eq:mpt-relaxation} is, in fact, a bilinear program
which can be reformulated using its dual to a multiparamteric linear
optimization
problem\cite{Jones+Others/2007/Multiparametric,Kvasnica+Others/2004/Multi}. However,
the direct resolution of a multiparametric program is very expensive
since it requires to find exponentially many \emph{critical regions},
and for each region we have to find our optimal value which will be
an affine function depending on the parameter vector $\vc$.
%find 
%approaches equivalent to enumerating the vertices of the convex polyhedron $A \vz \leq \vb$.
Therefore, rather than solve the optimization problem ~\eqref{eq:mpt-relaxation}, we simply seek values of $\vc$ such that 
 \begin{equation}\label{eq:rlt-parameterized}
\mbox{find}\ \vc\ \mbox{s.t.}\ (\forall\ \vz)\ A \vz \leq \vb\ \Rightarrow\ \vc^t\ \scr{B}\ \vz \ \geq 0 \,. 
\end{equation}
We now use Farkas lemma, a well known result in linear programming, to dualize~\cref{eq:rlt-parameterized}.

\begin{lemma}\label{lem:feasibility}
 $\vc$ is a solution to the problem in~\cref{eq:rlt-parameterized} if and only if there exist multipliers $\vec{\lambda} \geq 0$
such that 
\[  A^t \vec{\lambda} = - \scr{B}^t\ \vc,\ \vb^t\ \vlam\ \leq 0,\ \mbox{and}\ \vlam \geq 0 \]
\end{lemma}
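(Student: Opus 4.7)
The statement is a straightforward application of linear programming duality (equivalently, the affine form of Farkas' lemma), so my plan is to cast the universally quantified implication in~\cref{eq:rlt-parameterized} as a primal LP in $\vz$ parameterized by $\vc$, then dualize. Concretely, for a fixed $\vc$, define $\vd := \scr{B}^t \vc$ and consider the primal
\begin{equation*}
p^*(\vc)\ :=\ \min_{\vz}\ \vd^t \vz\ \ \text{s.t.}\ \ A\vz \le \vb,
\end{equation*}
with $\vz$ free in sign. The universal statement $(\forall\vz)\ A\vz \le \vb \Rightarrow \vc^t\scr{B}\vz \ge 0$ is just the assertion $p^*(\vc) \ge 0$, interpreting $p^*(\vc) = +\infty$ when the primal is infeasible.

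Next I would write down the dual of this primal. Because $\vz$ is unconstrained in sign, the dual multipliers $\vlam$ for the inequality $A\vz \le \vb$ are nonnegative but the dual constraints are equalities:
\begin{equation*}
d^*(\vc)\ :=\ \max_{\vlam}\ -\vb^t \vlam\ \ \text{s.t.}\ \ A^t \vlam = -\scr{B}^t \vc,\ \ \vlam \ge 0.
\end{equation*}
I would then invoke strong LP duality: either both the primal and dual have the same finite optimal value, or exactly one side is infeasible and the other is unbounded (or both are infeasible). In all these cases, $p^*(\vc) \ge 0$ is equivalent to the dual being feasible with a point $\vlam \ge 0$ achieving $-\vb^t \vlam \ge 0$, i.e.\ satisfying $A^t \vlam = -\scr{B}^t \vc$ and $\vb^t \vlam \le 0$. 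This is precisely the condition in the statement.

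For the two directions separately: the $(\Leftarrow)$ direction is immediate from weak duality, since if $\vlam \ge 0$ with $A^t\vlam = -\scr{B}^t\vc$ exists, then for any $\vz$ with $A\vz \le \vb$ one has $\vc^t \scr{B}\vz = -\vlam^t A\vz \ge -\vlam^t \vb \ge 0$, no duality theorem needed. The $(\Rightarrow)$ direction is where I need strong duality (or the affine Farkas lemma) and will be the only delicate step.

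The one technicality to address is the trivial case where the primal feasible set $\{\vz : A\vz \le \vb\}$ is empty, since then~\cref{eq:rlt-parameterized} is vacuously satisfied for every $\vc$ and we need existence of a corresponding dual $\vlam$; this case is handled by the standard Farkas argument (infeasibility of the primal produces an unbounded dual ray, and in particular a feasible $\vlam$ with $\vb^t \vlam < 0$). In our application, however, the feasible set of each Bernstein LP~\cref{eq:lpbern1,eq:lpbern2,eq:lpbern3} is nonempty by construction (the Bernstein values at any $\vx \in [0,1]^n$ give a feasible $\vz$), so the clean strong-duality argument applies directly and this is not really an obstacle.
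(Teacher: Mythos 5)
Your argument follows the same Farkas/LP-duality route that the paper merely gestures at, and the weak-duality ($\Leftarrow$) computation and the strong-duality ($\Rightarrow$) step for a feasible primal are both correct. The one flaw is in your handling of the primal-infeasible case: the Farkas certificate for infeasibility of $\{\vz : A\vz \le \vb\}$ is a $\vlam_0 \ge 0$ with $A^t\vlam_0 = 0$ and $\vb^t\vlam_0 < 0$, which is a recession direction of the dual but not itself a dual feasible point, since it satisfies $A^t\vlam_0 = 0$ rather than $A^t\vlam_0 = -\scr{B}^t\vc$. If both the primal and the dual happen to be infeasible, the ``only if'' direction of the lemma actually fails. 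As you yourself observe, this degenerate case cannot arise in the paper's setting because each Bernstein LP is feasible by construction (setting $z_{I,\delta} = B_{I,\delta}(\vx)$ at any fixed $\vx \in [0,1]^n$ yields a feasible point), so the clean strong-duality argument suffices. Promote that remark from an aside to the actual resolution of the corner case and your proof is sound; it is the same argument the paper has in mind when it cites Farkas lemma.
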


As a result, we now have a procedure to reduce the search for a
parametric positive polynomial as the feasibility problem for a set of linear
constraints. 
\begin{remark}
  The trick of using Farkas Lemma to handle multi-linear constraint is
   well known from previous work on the synthesis of ranking
  functions~\cite{Colon+Sipma/01/Synthesis,Podelski04}.
\end{remark}

\subsection{Simplified Encoding}\label{Sec:simplified-encoding}

Thus far, our approaches have encoded both the positive definiteness
of $V(\vx,\vc)$ and the negative definiteness of $V'(\vx,\vc)$ to
yield a combined linear or semi-definite program that can be used to
synthesize the Lyapunov function. In this section, we propose a
simplified approach that simply focuses on encoding the negative
definiteness of $V'(\vx,\vc)$, extracting a solution $\vc$ and
checking that the result $V_{\vc}(\vx)$ is in fact positive definite.
\begin{enumerate}
\item Choose a form $V(\vx,\vc)$.
\item Encode negative definiteness of $V'(\vx,\vc)$ (the Lie derivative) over $U$. In particular, we do not 
encode the positive definiteness of $V(\vx,\vc)$.
\item Compute a solution for $\vc$ and \emph{check} that the solution is, in fact, positive definite over $U$.
\end{enumerate}

The approach is motivated by the following result from Vannelli and
Vidyasagar (Page 72, Lemma 3)~\cite{Vannelli+Vidyasagar/1985/Maximal}.
A proof of this theorem is also included for the sake of completeness.
\begin{theorem}\label{theo:stab}
If $\scr{S}$ is an asymptotically stable system on $U$, $V(\vx)$
is a continuous function over $U$ with $V(\vec{0}) = 0$, and  $V'$ is
negative definite, then $V$ is positive definite in some neighborhood
of $\vec{0}$.
\end{theorem}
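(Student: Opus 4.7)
The plan is to use the asymptotic stability assumption to exhibit, for each point $\vx_0$ near the origin, a trajectory that converges to $\vec 0$, and then combine strict monotonicity of $V$ along that trajectory (guaranteed by $V'$ being negative definite) with continuity of $V$ to conclude $V(\vx_0)>0$. In other words, the negative definiteness of $V'$ together with the convergence of trajectories will push the values of $V$ down to $V(\vec 0)=0$ in the limit, which forces $V$ to have been strictly positive to begin with.

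More concretely, the first step is to invoke the asymptotic stability hypothesis to obtain an open neighborhood $N\subseteq U$ of $\vec 0$ such that every trajectory $\vx(t)$ with $\vx(0)\in N$ stays in $U$ for all $t\ge 0$ and satisfies $\vx(t)\to \vec 0$ as $t\to\infty$. Fix an arbitrary $\vx_0\in N\setminus\{\vec 0\}$ and let $\vx(t)$ denote the corresponding trajectory. The second step is to note that, since $f$ is Lipschitz, uniqueness of solutions implies $\vx(t)\ne \vec 0$ for every finite $t\ge 0$ (otherwise two distinct trajectories through $\vec 0$ would exist). Consequently $V'(\vx(t))<0$ for all $t\ge 0$, so $\frac{d}{dt}V(\vx(t)) = V'(\vx(t))<0$ and the function $t\mapsto V(\vx(t))$ is strictly decreasing on $[0,\infty)$.

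The third step uses continuity: because $\vx(t)\to \vec 0$ and $V$ is continuous with $V(\vec 0)=0$, we have $V(\vx(t))\to 0$ as $t\to\infty$. Combining with strict monotonicity gives
\[
V(\vx_0) \;=\; V(\vx(0)) \;>\; \lim_{t\to\infty} V(\vx(t)) \;=\; 0.
\]
Since $\vx_0\in N\setminus\{\vec 0\}$ was arbitrary, $V(\vx)>0$ on $N\setminus\{\vec 0\}$, and by hypothesis $V(\vec 0)=0$, so $V$ is positive definite on the neighborhood $N$.

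The main subtlety, and the only place where one must be a little careful, is justifying that $\vx(t)\ne \vec 0$ for all finite $t$ so that the strict inequality $V'(\vx(t))<0$ holds everywhere along the orbit; this is where the standing Lipschitz assumption on $f$ (uniqueness of solutions of the ODE) is used. Everything else is a direct application of asymptotic stability, the sign condition on $V'$, and continuity of $V$; no representation of $V$ as a polynomial or sum of Bernstein polynomials is needed, so the argument is essentially a classical Lyapunov-theoretic converse statement phrased exactly as stated.
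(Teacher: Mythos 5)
Your argument is correct and is essentially the contrapositive formulation of the paper's proof-by-contradiction: both hinge on the fact that $V$ strictly decreases along trajectories (because $V'$ is negative definite) combined with continuity of $V$ at $\vec{0}$ and convergence of trajectories to $\vec{0}$. Your direct version is arguably a bit cleaner in that you explicitly choose $N$ so that trajectories stay in $U$ and converge, and you explicitly invoke uniqueness of solutions (via the Lipschitz hypothesis) to justify $\vx(t)\neq\vec 0$ and hence the strict inequality $V'(\vx(t))<0$ for all finite $t$, a point the paper's proof leaves implicit.
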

\begin{proof}
  Assume, for the sake of contradiction, that every neighborhood $N$
  of $\vec{0}$ has a point $\vx_0 \not=0$ such that $V(\vx_0) \leq
  0$. For each $N, \vx_0$, we will now show that the trajectory
  starting at $\vx_0$ cannot converge asymptotically to $\vec{0}$. Let
  $t \in [0,T)$ represent a time interval for which $\vx(t) \in
  N\setminus\{\vec{0}\}$. If $\vx(t) \in N\setminus\{0\}$ forever,
  then we set $T = \infty$. Consider any finite, or infinite sequence
  of time instances $t_0 = 0 < t_1 < t_2 \ldots < T$.  We observe that
  $0 \geq V(\vx(t_0)) > V(\vx(t_1)) > \cdots $, since
\[ V(\vx(t_i)) = V(\vx(t_{i-1})) + \underset{< 0}{\underbrace{\int_{t_{i-1}}^{t_i} V'(\vx(s)) ds}} \,.\]
By the continuity of $V$, and the fact that $V(\vec{0})= \vec{0}$, we
conclude that the trajectory $\vx(t)$ cannot converge asymptotically
to $\vec{0}$. In other words, the system is not asymptotically stable. This
directly contradicts our original claim.
\end{proof}

%\begin{note}[Simplified Encoding]
We will focus on encoding the
negative definiteness of $V'(\vx,\vc)$ over the given domain $R_x$,
without requiring that  $V(\vx,\vc)$ be positive definite. Once a suitable $\vc$ is
found, we simply check of $V_{\vc}(\vx)$ is positive definite over
$R_x$. Failing this, we simply choose a point $\vy \in R_x$ where $V$
fails to be positive and simply repeat our procedure by adding an additional constraint that
$V(\vy,\vc) > 0$.
\begin{remark}
The  advantage of this simplified encoding is that the synthesis part for $V_{\vc}$ is replaced 
by a simple check of positivity. If the obtained $V_{\vc}$ is non positive we can conclude using 
Theorem~\ref{theo:stab} that the system is already unstable. 

As a result, the simplified encoding results in a LP relaxation with fewer constraints.
\end{remark}

It now remains to address: (a) the transformation from 
a given domain $R_x$ to the domain $[0,1]^n$ for applying the Bernstein
polynomial based LP relaxations, and (b) encode negative definiteness of the
parametric polynomial $V'(\vx,\vc)$.

\subsection{Transforming Co-ordinates}\label{Sec:transform-coordinates}
Let $R_x: \prod\limits_{j=1}^n\ [\ell_j,u_j]$ be the domain of
interest.  We consider the change-of-basis  transformation
from $\vx \in R_x$ to a new set of variables $\vy \in [0,1]^n$
\[ x_j\ \mapsto\ \ell_j + y_j (u_j - \ell_j) \]
Let $\vm$ denote the original monomial basis over $\vx$ consisting
of monomials $\vx^I$ for $I \leq \delta$. Corresponding to this,
we define $\hat{\vm}$ as the monomial basis over $\vy$, consisting
of monomials $\vy^I$ for $I \leq \delta$. It is easy to see that
any monomial $\vx^I$ can be written as a polynomial involving monomials $\vy^J$
of degree $J$ at most $I$.  Therefore,
\[ {\vm} \equiv T \hat{\vm} \,\ \mbox{wherein} \]
each row of $T$ corresponds to a monomial $\vx^I$ and each column to a
monomial $\vy^J$.  Each row therefore lists the coefficients of the
monomial $\vx^I$ as a function over $\vy$.

Therefore, $\vc^t\ \vm\ \equiv\ \vc^t  T \hat{\vm} $. Rather than encoding
the positivity of the original polynomial $V(\vx,\vc):\ \vc^t \vm$ over $R_x$,
we encode that of $(T^t \vc)^t \hat{\vm}$ over $[0,1]^n$.

\subsection{Lie derivatives}

Finally, Lyapunov function synthesis requires us to encode the 
negative definiteness of $V'(\vx,\vc)$ rather than $V$.  Once again,
this requires us to consider the coefficients of the form $V'(\vx,\vc)$ 
as a linear transformation applied over $\vc$.

Since the RHS of the ODE is polynomial, we consider the Lie derivative
of each monomial $\vx^I$ as a polynomial $p_I$. Let $\scr{D}$
represent the matrix wherein each row of $\scr{D}$ represents the
monomial $\vx^I$ and the contents of the row are the coefficients of
the lie derivative of $\vx^I$.

Therefore, applying Lie derivative to $V(\vx,\vc):\ \vc^t \vm$, we obtain
\[ V'(\vx,\vc):\ \vc^t\ \scr{D}\ \vm' \,. \]
Here the vector $\vm'$ represents the set of monomials involved in the Lie derivative. 

\subsection{Overall Encoding}

To summarize, we are asked to find a value of $\vc$ such that the Lie
derivative of the polynomial $V(\vx,\vc)$ is non-negative over $R_x$.
Let $\scr{D}$ represent the matrix form of the Lie derivatives on the
monomial basis $\vm$, $T$ represent the transformation of the
monomials from $R_x$ to $[0,1]^n$, and finally $\scr{B}$ represent the
transformation to Bernstein form. The overall optimization involves
finding $\vc$ such that 
\begin{equation}
\mbox{find}\ \vc\ \mbox{s.t.}\ (\forall\ \vz)\ A \vz \leq \vb \ \Rightarrow\  (\scr{B}^t\ \times T^t\ \times \scr{D}^t \vc)^t \vz \leq 0
\end{equation}
As a result, applying Farkas lemma transforms this into solving the feasibility problem below:
\begin{equation}\label{Eq:lyapunov-derivative-encoding}
 \mbox{find}\ \vc\ \mbox{s.t.}\  (\exists\ \vlam)\ \underset{\mbox{LP feasibility}}{\underbrace{A^t \vlam = \scr{B}^t T^t \scr{D}^t \vc,\quad\ \vb^t\ \vlam \leq 0,\ \mbox{and}\ \vlam \geq 0}} \,.
\end{equation}
We note that $\vc = 0$ is seemingly a trivial solution to the
feasibility problem in ~\cref{Eq:lyapunov-derivative-encoding}. But,
this does not yield a Lyapunov function. To address, this, we recall
that our goal is to encode the \emph{negative definiteness} and not
the negative semi-definiteness of the derivative. On the other
hand,~\cref{Eq:lyapunov-derivative-encoding} encodes the negative
semi-definiteness. 

As mentioned earlier, we ensure that $U:\ V'(\vx,\vc)
- \vx' \Lambda \vx$ is negative semidefinite
using~\cref{Eq:lyapunov-derivative-encoding} rather than $V'$ itself.
The matrix $\Lambda$ is a diagonal matrix whose diagonal entries are
all set to a small value $\epsilon > 0$, chosen by the user. We choose
$\epsilon = 0.1$ for most of our experiments.

\begin{remark}
The problem posed in~\cref{Eq:lyapunov-derivative-encoding} can be
simplified considerably for the LP relaxation~\cref{eq:lpbern1}). In
the absence of further bounds about the Bernstein polynomials, the
smallest Bernstein coefficient is a lower bound on the minimum value
of a polynomial.  Therefore, the constraints
in~\cref{Eq:lyapunov-derivative-encoding} can be simplified as
\begin{equation}\label{eq:firstlp}
\mbox{find}\ \vc\ \mbox{s.t.  } \scr{B}\cdot \vc \geq 0\ \,.
\end{equation}
Effectively the form above imposes that all the Bernstein coefficients
of $V(\vx,\vc)$ are non-negative. This implicitly eliminates the
multipliers $\vlam$ from the LP relaxation.
\end{remark}

\begin{remark}
Infeasibility of ~\cref{Eq:lyapunov-derivative-encoding} means that
our search failed to find a Lyapunov function. This can indicate many
problems, including (a) the system is not stable, (b) the system is
stable but no polynomial Lyapunov function exists~\cite{Parillo2011}, (c) the system is stable
with a polynomial Lyapunov but it is not provable using the relaxation
that we have chosen to arrive at our LP.
\end{remark}

\subsection{Higher relaxation degree}
Our linear relaxations are based on a fixed degree for the Bernstein
polynomial expansion.  By default, this degree called $\delta$ is
fixed to some chosen value at the beginning of the algorithm. However,
if the technique fails to find a Lyapunov function, we may 
improve precision by increasing the degree $\delta$.  The following convergence
result motivates the possible improvement in the lower bounds of the LP relaxation
by increasing the degree bound $\delta$~\cite{Rokne1995}:
\begin{theorem}
Let $p$ be a multivariate polynomial of degree $\delta=(\delta_1,\dots,\delta_n)$ and let $b_{I,\delta}=b_I$ be its Bernstein coefficients with respect to the unit box $[0,1]^n$:
\begin{equation}
 \left| b_{I,\delta} - p\left(\frac{I}{\delta}\right)\right|=O \left(\frac{1}{\delta_1}+\dots+\frac{1}{\delta_n}\right) \text{ for all } I\le \delta.
\end{equation}
\end{theorem}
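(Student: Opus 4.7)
The plan is to compare the explicit Bernstein coefficient formula~\eqref{eq:bernstein-coeff-formula} with the value $p(I/\delta)$ on a monomial-by-monomial basis, and argue that each monomial's contribution to the discrepancy is $O(1/\delta_r)$ for some coordinate $r$. Writing $p(\vx)=\sum_J p_J \vx^J$ and adopting the convention $\binom{I}{J}=0$ whenever $J\not\le I$, I would first expand
\[
 b_{I,\delta}-p(I/\delta) \;=\; \sum_{J\le \delta} p_J\left(\frac{\binom{I}{J}}{\binom{\delta}{J}} - \left(\frac{I}{\delta}\right)^{J}\right),
\]
so the task reduces to bounding the parenthesized factor for each multi-index $J$.

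Both sides factor coordinatewise: setting $A_k^{(J)}=\binom{i_k}{j_k}/\binom{\delta_k}{j_k}$ and $B_k^{(J)}=(i_k/\delta_k)^{j_k}$, the parenthesized factor equals $\prod_k A_k^{(J)}-\prod_k B_k^{(J)}$. The next step is to apply the standard telescoping identity
\[
 \prod_{k=1}^n A_k - \prod_{k=1}^n B_k \;=\; \sum_{r=1}^n \Bigl(\prod_{k<r} A_k\Bigr)(A_r-B_r)\Bigl(\prod_{k>r} B_k\Bigr),
\]
and exploit that $0\le A_k,B_k\le 1$ (since $i_k\le \delta_k$ and $j_k\le\delta_k$). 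This reduces the estimate to bounding $|A_r^{(J)}-B_r^{(J)}|$ for each coordinate.

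For that univariate bound I would split into two cases. If $j_r\le i_r$, both $A_r^{(J)}$ and $B_r^{(J)}$ are nonnegative products of $j_r$ factors, namely $\prod_{l=0}^{j_r-1}\tfrac{i_r-l}{\delta_r-l}$ and $\prod_{l=0}^{j_r-1}\tfrac{i_r}{\delta_r}$ respectively, and a second application of the product-telescoping identity together with the elementary inequality
\[
 \left|\frac{i_r-l}{\delta_r-l}-\frac{i_r}{\delta_r}\right| \;=\; \frac{l\,|\delta_r-i_r|}{\delta_r(\delta_r-l)} \;\le\; \frac{l}{\delta_r-l}
\]
yields $|A_r^{(J)}-B_r^{(J)}|=O(j_r^2/\delta_r)$. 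If instead $j_r>i_r$, then $A_r^{(J)}=0$ while $B_r^{(J)}=(i_r/\delta_r)^{j_r}\le i_r/\delta_r\le (j_r-1)/\delta_r$, which is again $O(1/\delta_r)$ with a constant controlled by the degree of $p$ in $x_r$.

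Finally, summing over the finitely many monomials $J$ appearing in $p$ gives
\[
 |b_{I,\delta}-p(I/\delta)| \;\le\; \sum_{J\le \delta} |p_J| \sum_{r=1}^n O(1/\delta_r) \;=\; O\!\left(\frac{1}{\delta_1}+\cdots+\frac{1}{\delta_n}\right),
\]
as claimed, with a constant depending on the coefficients and degree of $p$. The main obstacle is the multi-index bookkeeping combined with the case distinction $j_r\le i_r$ versus $j_r>i_r$: in the second case the Bernstein-side factor vanishes while the monomial evaluation does not, so one cannot uniformly apply the same telescoping. Handling both cases uniformly through the product-telescoping identity, applied once across coordinates and a second time within a coordinate, is the key algebraic device.
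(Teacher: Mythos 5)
Your proof is correct, and in fact supplies something the paper does not: the paper states this theorem only with a citation to Lin and Rokne (1996) and gives no proof, so there is no internal argument to compare against. Your double telescoping---once across coordinates using $0\le A_k^{(J)},B_k^{(J)}\le 1$, and once within a coordinate across the $j_r$ factors of $A_r^{(J)}=\prod_{l=0}^{j_r-1}\tfrac{i_r-l}{\delta_r-l}$ and $B_r^{(J)}=\prod_{l=0}^{j_r-1}\tfrac{i_r}{\delta_r}$---is exactly the right algebraic device, and the case split on $j_r\le i_r$ versus $j_r>i_r$ (where $\binom{i_r}{j_r}=0$ so $A_r^{(J)}$ vanishes and you bound $B_r^{(J)}\le i_r/\delta_r\le (j_r-1)/\delta_r$ directly) is the genuine subtlety and is handled cleanly. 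One small caveat worth making explicit, which your final sentence implicitly assumes: the big-$O$ statement is meaningful only under the reading that $p$ itself has a fixed (finite) set of monomials---so its degree in each $x_r$ stays bounded---while the Bernstein expansion degree $\delta$ tends to infinity; the paper's phrasing ``$p$ of degree $\delta$'' obscures this, but your bound $\frac{l}{\delta_r-l}=O(1/\delta_r)$ and the final sum over ``the finitely many monomials $J$ appearing in $p$'' are valid precisely under that interpretation. Your argument is more elementary and self-contained than a reader would obtain by chasing the citation, so this is a welcome addition rather than a duplication.
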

As a consequence, when the optimal value of our linear or bilinear
program is negative, we can just increase the degree of the relaxation
allowing the relaxation to be more precise and then increasing the
possibility to find our Lyapunov function.
\subsection{Branch and bound decomposition}
A second, more widely used approach to improving the relaxation, is to
perform a branch and bound decomposition. The essential idea consists
on verifying the so called \emph{vertex condition}~\cite{Garloff93} for the
given hyper-rectangle which guarantees that the LP relaxation
coincides with the optimal value. Informally, this condition requires that no local
minima for a polynomial $p$  exist in the interior of the rectangle. If it doesn't hold
we will simply divide our rectangle and keep doing it until reaching
the global minimum and getting exact bounds in each sub box. In our
case, we have two main differences:
\begin{enumerate}
\item We do not have a fixed polynomial, but a parametric polynomial $V(\vx,\vc)$.
\item The global minimum for a Lyapunov function $V$ is known in
  advance as the equilibrium $\vec{0}$. Likewise, the negation of its
  derivative also has $\vec{0}$ for a global minimum.
\end{enumerate}
For these reasons, our branch-and-bound approach focuses on
decomposing the given region $R_x$, so that the equilibrium $\vec{0}$
lies in the boundaries of our cells rather than the relative interior,
in an attempt to satisfy the \emph{vertex condition}.  So if a
Lyapunov function is not found, we simply choose a variable $x_j$ and
consider two cells $R_x^{(1)}:\ R_x \cap \{ x_j \leq 0 \}$ and
$R_x^{(2)}:\ R_x \cap \{ x_j \geq 0 \}$.  The cells may be recursively
subdivided if necessary. In the limit, this approach creates $2^n$
cells, and can be expensive for systems with more than $10$s of variables.
The computational complexity can be mitigated by examining a a few
cells in the decomposition and trying to find a Lyapunov candidate
based on the examined cells. We can then check if the Lyapunov
candidates are indeed Lyapunov functions by considering the other
cells. This approach can, in the worst case, examine every cell in the
decomposition. However, if a good empirical strategy for selecting the
cells can be found, the approach can save much effort involved in
encoding the LP relaxations for an exponential number of cells.

\section{Numerical results}\label{Sec:numerical-eval}

In this section, we present an evaluation of various linear
programming relaxations using Bernstein
polynomials~\cref{eq:lpbern1,eq:lpbern2,eq:lpbern3}, extended using
the technique for encoding the positivity of a parametric form,
presented in Section~\ref{Sec:poly-lyap-synth}.

\subsection{Implementation}

Our approaches are implemented as a MATLAB(tm) toolbox for
synthesizing Lyapunov function. Apart from a description of the system
to be analyzed, the inputs include the maximum degree $\delta:
(\delta_1,\ldots,\delta_n)$ for the Bernstein expansion in each
variable, the region of interest (fixed to $[-1,1]^n$ for all of our
evaluation), and the number of subdivisions along each dimension.
Furthermore, our toolbox implements three LP relaxations, each adding
more constraints over the previous. The first relaxation is based
on~\cref{eq:lpbern1} simply uses the non-negativity and the unit
summation properties of Bernstein polynomials. The second LP
relaxation is based on ~\cref{eq:lpbern2}, adds upper bounds to the
Bernstein polynomials and finally, the third
approach~\cref{eq:lpbern3} adds the recurrence relations between the
Bernstein polynomials. Each approach is used in the Lyapunov search by
encoding the dual form~\cref{Eq:lyapunov-derivative-encoding}.

\subsection{Numerical Examples}

We first compare and contrast the three LP relaxations here over some
benchmark examples from our previous work~\cite{Sriram2013}. Then using
using a special problem generator, we compare the results we obtain for each benchmark with those
obtained by using the \texttt{findlyap} function in
SOSTOOLS~\cite{sostools}, and the Lyapunov functions obtained in our
previous work.  For all the examples, we wish to prove asymptotic
stability over $R_x=[-1,1]^n$.  We will report for each program the
Lyapunov function, the number of boxes in our decomposition, and two
computational times.

\textsc{Setup} is the needed time to compute
the data for the linear program. This includes:
\begin{enumerate}
\item The time needed to compute the 
matrix $\scr{B}$  (for all three relaxations),
\item Computing bounds  on the Bernstein polynomials (for second and third relaxations), and
\item Time needed to compute recurrences for each Bernstein polynomial (for the third relaxation)
\end{enumerate}
In fact, much of the computation of $\scr{B}$ and the bounds on it are
independent of the actual problem instance.  They can be performed
once, and cached for a given number $n$ of variables and given degree
bounds $\delta$, instead of recomputing them separately for each
problem.

\textsc{LPTime} is the computational time associated with solving the
linear programming relaxation using the \texttt{linprog} function
provided by MATLAB(tm). Also, we should mention that since all the LPs
are feasibility problems, the objective function is set to be the
maximization of the sum of the coefficients.
\subsubsection{Benchmarks from~\cite{Sriram2013} and comparison with Handelman Representations}

\begin{table}[t]
\caption{Table showing Lyapunov functions computed by each of the three LP relaxations on the three systems considered in 
Example~\ref{Ex:1}. The column \textbf{Relaxation} indicates which of the three LP relaxations was used, the \textbf{Lyapunov}
function for each approach, the number of \textbf{Boxes} in the subdivision and the computational times split into computing
the matrices and linear programming data, and the actual time needed to solve the LP. All timings are in seconds. }\label{Tab:table-ex-1}
\begin{center}
\begin{tabular}{||l|l||l|l|l|l|}
\hline
System & Relaxation & Lyapunov  & \# Boxes & \textsc{Setup}   & \textsc{LPTime} \\
\hline
\eqref{eq:sys1} & LP1      &$4.5807x^2+4.5807xy +2.2906y^2$ & 2         & 0.06                   & 0.36                        \\
 & LP2 &$5x^2 +4.9995xy+2.5002y^2$          & 2         & 0.09                      & 0.34                  \\
 &LP3     & $5x^2 +5xy+2.5y^2$                        & 2        & 0.15                & 0.37                    \\
 &    &  &   &  & \\
\eqref{eq:sys2} & LP1     &$4.3039x^2 +4.3039y^2 $ & 4         & 0.13                 & 0.38                    \\
 & LP2 &$4.9998x^2 +5y^2$          & 4           &0.18                & 0.41                   \\
 & LP3    & $5x^2+5y^2$                   & 4     &0.37                 &  0.77                   \\ 
 &    &  &   &  & \\
\eqref{eq:sys3} & LP1  &$4.6809x^2 +4.9547y^2 $ & 4         & 0.16             & 0.36                     \\
  & LP2 &$4.9998x^2 +5y^2$          & 4           &0.18           & 0.40           \\
 &LP3     & $5x^2+5y^2$                   & 4     &0.33                &  0.43            \\
\hline
\end{tabular}
\end{center}
\end{table}

\begin{example}\label{Ex:1}
Consider the system over $(x,y)$: 

\begin{equation}\label{eq:sys1}
 \frac{dx}{dt} = - x^3 +y,\ \frac{dy}{dt} = -x-y \,.
\end{equation}

The Handelman relaxation technique in our previous
 work~\cite{Sriram2013} finds the Lyapunov function $x^2+y^2$ taking less than $0.1$ seconds, whereas
 SOS discovers $1.2118 x^2 + 1.6099\times 10^{-5} xy + 1.212 y^2$, requiring $0.4$ seconds. The
 three relaxations each using a subdivision of $[-1,1]^2$ discover the
 function $x^2+ xy + \frac{1}{2} y^2$ (with a multiplicative factor,
 and modulo small perturbations due to floating point
 error). Interestingly, the system is globally asymptotically stable,
 and the Lyapunov function discovered by our approach is valid
 globally.

Next, we consider the system:
\begin{equation}\label{eq:sys2}
\frac{dx}{dt} = -x^3 - y^2,\ \frac{dy}{dt} = xy - y^3 \,.
\end{equation}

The Handelman relaxation approach~\cite{Sriram2013} finds a $4$ degree
Lyapunov function $x^4 + 2 x^2 y^2 + y^4$, requiring less than $0.1$
seconds, whereas the SOS approach produces $0.62788 x^4 + 0.052373 x^3
+ 0.65378 x^2 y^2 + 1.1368 x^2 - 0.18536 x y^2 + 0.60694 y^4 + 1.1368
y^2$ after deleting terms with coefficients less than $10^{-7}$.  The
SOS approach requires roughly $0.4$ seconds for this example. Our
approach discovers degree two Lyapunov function $x^2 + y^2$ that is
also globally stable.

Finally, we consider the system:

\begin{equation}\label{eq:sys3}
\frac{dx}{dt} = -x - 1.5 x^2 y^3,\ \frac{dy}{dt} =  -y^3 + 0.5 x^2 y^2 \,.
\end{equation}

The approach in~\cite{Sriram2013} proves asymptotic stability over
$[-1,1]^2$ through the function $0.2x^2+y^2$, requiring  $0.4$
seconds, whereas the SOS approach finds $2.4229 x^2 + 4.4868 y^2$
requiring a running time of $8.8$ seconds.

The specific Lyapunov functions found for
systems~\cref{eq:sys1,eq:sys2,eq:sys3}, the running times and number
of subdivisions needed are summarized in
Table~\ref{Tab:table-ex-1}.  

\end{example}

\begin{table}[ht]
\caption{Performance of our approach on the synthesized benchmarks. The column $n$: number of variables,
$d_{\max}$: maximum degree of the vector field, \textsc{succ}?
indicates whether the approach succeeded in finding a Lyapunov
function, $\tick$: succeeded with Lyapunov, \textsc{np}: numerical
problem, \textsc{mo}: out-of-memory, $d_L$: degree of Lyapunov
function, $d_Q$: degree of SOS multipliers, \textsc{Setup}: setup
time, $T_{SDP}$: SDP Solver time, \textsf{Rel. Typ.}: Relaxation
Type, \#Box: number of boxes in decomposition, $T_{LP}$: LP solver
time. All times are reported in
seconds.}\label{Tab:synth-benchmarks-data}
\begin{center}
{\small
\begin{tabular}{|l | l  l | l  l l l  l | l  l  l  l l |}
\hline
ID & $n$ & $d_{\max}$ & \multicolumn{5}{c|}{Putinar  (SOS)} & \multicolumn{5}{c|}{Bernstein (our approach)} \\
\cline{4-13}
   &         &      &  \textsc{succ}? & $d_{L}$ & $d_{Q}$ & \textsc{Setup} &  $T_{SDP}$ & Rel. Typ. & \textsc{succ}? & \# Box & \textsc{Setup} & $T_{LP}$ \\
\hline

1 & 2 & 3  & \tick & 2 & 2 & 0.35 &  0.9 & LP1 & \tick &4   &0.17   & 0.43 \\
  &   &   &        &   &   &      &      & LP2 &\tick  &4   &0.20   & 0.42  \\
  &   &   &        &   &   &      &      & LP3 &\tick  &2   & 0.17  &0.38   \\[10pt]
2 & 2 & 3 &  \tick  & 2 & 2 & 0.3 & 0.67 &  LP1 &\tick &4  &0.17   &0.42     \\
  &   &   &        &   &    &    &      &  LP2 &\tick  & 4 &0.19  & 0.38  \\
  &   &   &        &   &    &   &      &  LP3 &\tick  &  2 & 0.17  &0.35   \\[10pt]
3 & 2 & 3 &  \tick  & 2 & 2 & 0.33 & 0.61 &  LP1 &\tick  & 4  &0.17   & 0.37  \\
  &   &   &        &    &   &      &      &  LP2 &\tick &4  & 0.18  & 0.35  \\
  &   &   &        &    &   &     &      &   LP3 &\tick&2   &0.16   &0.35   \\[10pt]
4 & 2 & 3 &  \tick  & 2 & 2 & 0.3 & 0.97 &  LP1 &\tick  &4   &0.17   &0.37   \\
  &   &   &        &    &   &     &      &  LP2 &\tick  &4   &0.21   &0.39   \\
  &   &   &        &    &    &     &      &  LP3 &\tick  &2   &0.17   &0.35   \\[10pt]
5 & 3 & 3 &  \tick  & 2  & 2 &  0.86 & 1.12 &  LP1&\tick &8 &0.81 &0.47   \\
  &   &   &        &       &     &   &      &  LP2 &\tick  &8   & 0.97  &0.61   \\
  &   &   &        &    &    &       &      &  LP3 &\tick  &4   &1.24   &0.70   \\[10pt]
6 & 3 & 5 & \cross (\textsc{np}) & 2 & 2 &  0.81 & 2.3 & LP1 &\tick &8   &7.15   &6.4   \\
  &   &   & \tick   &  2 &  4 &  39.5   &  4.2     &  LP2 &\tick &8& 7.83  &17.17    \\
  &   &   &         &    &    &         &          &  LP3 &\cross(\textsc{np}) &8 &17.4   &102.3   \\[10pt]
7 & 3 & 5 &  \cross  (\textsc{np}) & 2 & 2 & 0.8  & 2.2 & LP1 &\tick &8   &6.50   &5.2  \\
  &   &   &  \cross (\textsc{np}) & 2 & 4 & 40    & 4.6 &  LP2 &\tick  &8   &7.42   &5.7   \\
  &   &   &  \cross(\textsc{np}) & 4 & 4 &  40.5   &  7 &  LP3 &\tick &8  &13.2  & 26.8  \\

\hline
\end{tabular}
}
\end{center}
\end{table}

\subsubsection{Synthetic Benchmarks and comparison with SOS}
 
We now consider a second class of \emph{synthetic benchmarks} that
were generated using a special problem generator, constructed for
generating challenging examples of locally stable polynomial vector
fields of varying degrees and number of variables to evaluate the
various techniques presented here.  Our overall idea is to fix two
homogeneous polynomials $V_1(\vx)$ and $V_2(\vx)$ that are positive
definite over a region of interest, chosen to be $K: [-1,1]^n$ for our
examples. The benchmarks described in this section along with the
Lyapunov functions synthesized are available on-line through arXiv~\cite{BenSassi+Others/2014/Synthesis}.

Subsequently, for each choice of $V_1, V_2$, we attempt to find a
system $\frac{d\vx}{dt} = F(\vx)$ such that the Lie derivative of
$V_1$ is $-V_2$, and with an equilibrium at $\vec{0}$.
\begin{equation}\label{Eq:benchmark-synth-proc}
(\grad V_1) \cdot F =  - V_2,\ \mbox{and}\ F(\vec{0}) = \vec{0} \,.
\end{equation}
Naturally, any such system using the vector field $F$ is guaranteed to
be asymptotically stable due to the existence of $V_1, V_2$.  To
synthesize a benchmark that is guaranteed to have asymptotic
stability, we need to find a suitable $F$ within a given degree bound.
To this end, we parameterize our system $F$ by a set of parametric
polynomials and attempt to find parameters that
satisfy~\cref{Eq:benchmark-synth-proc}. It is easy to show that our
approach leads to a set of linear equations on the parameters defining
the entries in $F$ and solving these equations yields a suitable
system $F$. The difficulty here lies in choosing appropriate $V_1,V_2$
so that the system $F$ can be found. In our experience, if $V_1, V_2$
are chosen arbitrarily, the likelihood of finding a function $F$ that
satisfies~\cref{Eq:benchmark-synth-proc} seems quite
small. Furthermore, since $F$ involves $n$ polynomials, the technique
yields prohibitively large equations for $n \geq 6$.  Our approach to
synthesize benchmarks is based on carefully controlling the choice of
$V_1,V_2$ and repeated trial-and-error, until feasible system of
equations is discovered, to synthesize a benchmark.  Having
synthesized our benchmark, we \emph{hide} the functions $V_1,V_2$
used to generate it and simply present the system $F$ to our
implementation, as well as for SOS program.

The key  to finding benchmarks lies in the generation of the polynomials $V_1, V_2$.
We generated $V_1$ as one of two simple forms: (a) $V_1: \vx^t \Lambda_1 \vx$, or
(b)  $V_1:\ \vm^t \Lambda_2 \vm$, 
wherein $\vm$ is a vector of squares of the system variables of the
form $[x_1^2, \ x_2^2,\  \cdots,\  x_n^2]'$, and $\Lambda_1,\Lambda_2$ are
diagonal matrices with non-negative diagonal entries chosen at random.

The polynomial $V_2$ is chosen to be a positive definite polynomial
over $[-1,1]^n$. The key idea here is to generate $V_2$ that is guaranteed
to be positive definite over $[-1,1]^n$ by writing 
\[ V_2 (\vx):\ \vx^t \Lambda \vx + \sum_j q_j \prod_{i=1}^n (1+x_i)^{p_{j,i}} (1-x_i)^{q_{j,i}} \,, \]
essentially as a Schm{\"u}dgen representation involving the polynomials
$(1-x_i), (1+x_i)$ for $i \in [1,n]$ and \emph{sum-of-squares}
polynomials $q_j$ obtained by squaring and adding randomly generated
polynomials together.

\begin{remark}
Even though our approach synthesizes an ODE $\frac{d\vx}{dt} = F(\vx)$
that by design has a Lyapunov function $V(\vx)$, we note that the
resulting system may (and often does) admit many other Lyapunov
functions with a possibility of a larger domain of attraction towards
the equilibrium $\vec{0}$.
\end{remark}

In many cases, the process of trial and error is required to find
pairs $V_1, V_2$ that yield a feasible vector field. Using this
process, $15$ different benchmarks were synthesized with $5$ each of
degrees $2,3,$ and $4$, respectively. %%
\iftrversion
Appendix~\ref{App:description}
reports the ODEs for these benchmarks and the Lyapunov functions
synthesized by our technique.
\fi

\begin{table}[h!]
\caption{Performance of our approach on the synthesized benchmarks (continued). Note that \textsc{mo}: out-of-memory termination, \textsc{to}: time-out. All times are reported in seconds.}\label{Tab:synth-benchmarks-data-ext}
\begin{center}
{\small
\begin{tabular}{|l | l  l | l  l l l  l | l  l  l  l l |}
\hline
ID & $n$ & $d_{\max}$ & \multicolumn{5}{c|}{Putinar  (SOS)} & \multicolumn{5}{c|}{Bernstein (our approach)} \\
\cline{4-13}
   &         &      &  \textsc{succ}? & $d_{L}$ & $d_{Q}$ & \textsc{Setup} &  $T_{SDP}$ & Rel. Typ. & \textsc{succ}? & \# Box & \textsc{Setup} & $T_{LP}$ \\
\hline
8 & 3 & 5 & \cross (\textsc{np})& 2 & 2 & 0.8 & 1.7 &LP1 &\tick & 8&10.63   & 10.9   \\
  &   &   & \cross (\textsc{np})&2 & 4 &  40.9  & 7.9     &  LP2 & \tick & 8 &11.91   & 30.97   \\
  &   &   & \cross (\textsc{np})&4 & 4 & 40.1     & 5.5     &  LP3 & \cross (\textsc{np})  &8   &22.38   &130.77   \\[10pt]
9 & 3 & 2 &  \cross (\textsc{np}) & 2 & 2 & 0.9 & 4.1 &LP1 &   \cross (\textsc{np})  & 8 &1.99   &0.61   \\
  &   &   & \cross (\textsc{np})& 2 & 4 & 42.2 & 3.7 &  LP2 &\tick &  8 &  2.06 & 0.92  \\
  &   &   & \tick   & 4  & 4 & 41.9 & 3.1 & LP3 & \tick  &8   &3.04   &3.81   \\[10pt]
10 & 3 &5 &  \cross (\textsc{np}) &2 &2 &	0.9 & 2.9 & LP1 &  \cross (\textsc{np}) &8  &3.48   &3.19   \\
   &   &   &  \cross (\textsc{np}) & 2 & 4 & 38.3 & 5.3 & LP2 &\tick &8   &1.23   &1.88   \\
  &    &    & \tick      & 4  & 4  & 38.7  & 5.54     & LP3 &\tick  & 8  &1.56   &0.60 \\[10pt]	
11 & 4 & 3 & \tick& 2 & 2 & 3.7 & 3.1 & LP1 & \tick &16 &3.58   &3.25     \\
  &    &    &       &   &   &      &     & LP2 &\tick &16  & 4.34  &17.27  \\							
  &    &    &       &   &   &      &     & LP3 & \tick & 16 & 9.05  & 53.5 \\[10pt]						
12 & 4& 3 & \cross (\textsc{np}) & 2 &2	 & 3.7	& 2.1 & LP1&\tick&16& 5.16 &16.85   \\
   &  &   & \cross (\textsc{mo})	& 2 & 4 & \multicolumn{2}{c|}{$> 600$} & LP2 &\tick&16  &6.38   &12.86    \\
   &  &    &       &   &   &      &     & LP3 &  \cross (\textsc{np}) &  16  &22.23   &224.23 \\[10pt]							
13 & 4 &6 & \cross (\textsc{np})  & 2 & 2 & 4& 3.1 & LP1 & \cross (\textsc{np})&16 &41.36   &627.25     \\
&  &   & \cross (\textsc{mo})	& 2 & 4 & \multicolumn{2}{c|}{$> 600$} & LP2 & \cross(\textsc{np}) &16   &43.38   &988.31   \\
 &    &    &       &   &   &      &     & LP3 & \cross (\textsc{to})  &16 &   \multicolumn{2}{c|}{$> 1200$}  \\[10pt]							
14 & 4 &6 & \cross (\textsc{np})  & 2 & 2 & 3.8 & 3.6 & LP1 &\cross (\textsc{np})  &16& 37.45  &339.86      \\
&  &   & \cross (\textsc{mo})	& 2 & 4 & \multicolumn{2}{c|}{$> 600$} & LP2 &\cross(\textsc{np})  &16 &41.93   & 1049.53     \\
 &    &    &       &   &   &      &     & LP3 & \cross (\textsc{to})  &  16 &   \multicolumn{2}{c|}{$> 1200$}   \\[10pt]	
15 & 4 &6 & \cross (\textsc{np})  & 2 & 2 & 3.8 & 3.9 & LP1 &\cross (\textsc{np}) &16 &38.55   &368.48     \\
&  &   & \cross (\textsc{mo})	& 2 & 4 & \multicolumn{2}{c|}{$> 600$} & LP2 &\cross(\textsc{np})  &16   &45.32   &888.33   \\
 &    &    &       &   &   &      &     & LP3 &   \cross (\textsc{to})&16 &  \multicolumn{2}{c|}{$> 1200$}   \\													
\hline

\end{tabular}
}
\end{center}
\end{table}

\subsubsection{Results} Tables~\ref{Tab:synth-benchmarks-data} and ~\ref{Tab:synth-benchmarks-data-ext}
compare the performance of the three LP relaxations implemented in our
prototype with an implementation \textsf{Putinar (SOS)}, based on
Putinar representation of the Lyapunov function and the negation of
its derivative, built using SOSTOOLS. Here we should mention that, in order to reduce the complexity of the `LP3' 
relaxation, we reduce ourselves to a first level of lower degrees (see Remark~\ref{rmq;firstlevel}).
For each of the $15$ benchmarks, we run both tools under different
setups. The \textsf{Putinar (SOS)} approach is run with varying
degrees of the Lyapunov function $d_L$, and degrees of the SOS
multipliers $d_Q$. We attempted three sets $(d_L, d_Q)= (2,2), (2,4),
(4,4) $ in succession, stopping as soon as a Lyapunov function is
found without a failure. To experiment with our approach and enable a
full comparison, we attempt all the three relaxations for all the
benchmarks.

We note that the LP relaxation approach is generally successful in
discovering Lyapunov functions.   In 7 out of 15 cases, all three
LP relaxations succeed, while at least one LP relaxation succeeds
in 12 out of 15 cases. On the other hand, the \textsf{Putinar (SOS)}
approach succeeds in 9 out of the 15 attempts, with
\emph{numerical problems} (\textsc{np}) being the most common
failure mode. These may arise due to many reasons, but commonly due to
the Hessian matrix becoming ill-conditioned during the calculation of
a Newton step.  For benchmarks 12-15, the polynomials involved become
so large, that the \textsf{Putinar (SOS)} approach runs out of memory
during the problem setup, causing MATLAB(tm) to crash. Our approach
also suffers from the same set of problems, but to a noticeably lesser
extent. For instance, 11 out of the 45 linear programs failed due to
numerical problems, and 3 more due to timeouts. On the other hand, 15
out of the 29 SDPs terminate with a numerical problem with an
additional 4 out-of-memory issues.

On most of the smaller benchmarks, all approaches have comparable
timings.  In general, the third relaxation (LP3) is the most
expensive, often more expensive than the other two LP relaxations or
the \textsf{Putinar (SOS)} approach. Likewise, when the degree of the
SOS multipliers $d_Q$ is increased from $2$ to $4$, we witness a
corresponding $40\times$ factor increase in the time taken to setup the
SDP, with a smaller increase in the time taken to solve the SDP.  For
the larger examples, the LP relaxation requires more time, but is
generally successful in finding an answer.

Finally, all approaches fail on benchmarks
13-15. %%
\iftrversion
Appendix~\ref{App:description} shows these benchmarks. %%
\fi
 A key
issue is the blowup in the number of monomial terms to be considered
in the parametric polynomial forms for the Lyapunov function and its
derivatives. This blowup seems to overwhelm both our approach and the
SOS programming approach. We conclude that handling large parametric
polynomials efficiently remains a challenging problem for our approach
as well as the \textsf{Putinar (SOS)} approach.

\section{Conclusion}
To conclude, we have examined three different LP relaxations for
synthesizing polynomial Lyapunov functions for polynomial systems. We
compare these approaches to the standard approaches using
Schm{\"u}dgen and Putinar representations that are used in SOS
programming relaxations of the problem. In theory, the Schm{\"u}dgen
representation approach subsumes the three LP relaxations. In
practice, however, we are forced to use the Putinar representation. We
show that the Putinar representation can prove some polynomials
positive semi-definite that our approaches fail to. On the other hand,
the reverse is also true: we demonstrate a polynomial that is easily
shown to be positive semi-definite on the interval $[-1,1]^n$ through
LP relaxations. However, the same fact cannot be demonstrated by a
Putinar representation approach. We then compare both approaches over
a set of numerical benchmarks. We find that the LP relaxations succeed
in finding Lyapunov functions for all cases, while the Putinar
representation fails in many benchmarks due to numerical (conditioning)
issues while solving the SDP. As future work, we wish to extend our
approach to a larger class of Lyapunov functions.  We also are looking
into the problem of analyzing systems with non-polynomial dynamics and
the synthesis of non-polynomial Lyapunov functions. 

Finally, Lyapunov functions have, thus far, remained important
theoretical tools for analyzing the stability of control
systems. However, these tools are seldom used, in practice, for 
industrial scale systems. This is chiefly due
to the burden of manually specifying Lyapunov functions. Therefore, stability of
complex industrial designs are often ``verified'' by extensive
simulations.  Recent work by Kapinski et al. argues that Lyapunov
functions can be of practical values for automotive designs, provided
they can be discovered easily, and certified using formal verification
tools~\cite{Kapinski+Others/2014/Simulation}.  We hope that the use of
linear relaxations can provide us with more approaches to synthesize
Lyapunov functions that can serve as certificates for stability.

\bibliographystyle{plainnat}
%%\bibliography{biblyap}

\iftrversion
\appendix
\section{Description of Synthesized Benchmarks}\label{App:description}
In this section,we describe each of the $15$ benchmarks and present
the results of our implementation.

\paragraph{Benchmark \#1:} Consider the two variable polynomial ODE:
\begin{align*}
\frac{dx}{dt} &=-12.5x+2.5x^2+2.5y^2+10x^2y+2.5y^3.\\
\frac{dy}{dt} & =-y-y^2.\\
\end{align*}

The second relaxation finds the Lyapunov function and derivative shown below:
\begin{lstlisting}
Lyapunov function :

  2.02x^2+5y^2.  

Lyapunov derivative function :

-50.5x^2-10y^2+10.1x^3+10.1xy^2-10y^3+40.4x^3y+10.1xy^3.
   

\end{lstlisting}
\paragraph{Benchmark \#2:} Consider the two variable polynomial ODE:
\begin{align*}
\frac{dx}{dt} &= 6.933333x^3+4.566667x^2-21.5x.\\
\frac{dy}{dt} & = 6.933333x^3+0.4x^2y+2.066667x^2+xy^2+0.6xy-9x-y^2-y.\\
\end{align*}
The first relaxation finds the Lyapunov function and derivative shown below:
\begin{lstlisting}
Lyapunov function :

  4.9183x^2-3.3198xy+4.1497y^2.  

Lyapunov derivative function :

  -181.6089x^2-8.2995y^2+38.0596x^3+8.2995xy^2-8.2995y^3. 
  +45.1833x^4+33.1978x^3y+8.2995xy^3.

\end{lstlisting}
\paragraph{Benchmark \#3:} Consider the two variable polynomial ODE:
\begin{align*}
\frac{dx}{dt} &=-1.5x-x^2+0.5xy+0.5y^2-2x^3+x^2y.\\
\frac{dy}{dt} & =-0.5y.\\ 
\end{align*}
The first relaxation finds the Lyapunov function and derivative shown below:
\begin{lstlisting}
Lyapunov function :

  4.9693x^2+4.8581y^2.  
    
Lyapunov derivative function :

  -14.908x^2-4.8581y^2-9.9386x^3+4.9693x^2y+4.9693xy^2-19.8773x^4+9.9386x^3y.

  \end{lstlisting}
\paragraph{Benchmark \#4:} Consider the two variable polynomial ODE:
\begin{align*}
\frac{dx}{dt} &=-2x^3-0.5xy-0.5x.\\
\frac{dy}{dt} & =0.25xy^2-0.125xy+0.25y^2-0.4125y.\\ 
\end{align*}

The first relaxation finds the Lyapunov function and derivative shown below:
\begin{lstlisting}
Lyapunov function :

  4.9663x^2+4.8552y^2.
    
Lyapunov derivative function :

  -4.9663x^2-4.0056y^2-4.9663x^2y-1.2138xy^2+2.4276y^3-19.8653x^4+2.4276xy^3.


  \end{lstlisting}
 \paragraph{Benchmark \#5:} Consider the three variable polynomial ODE:
\begin{align*}
\frac{dx}{dt} &=-2x^3-0.5xy-0.5x-z^3-z^2.\\
\frac{dy}{dt} & =0.25xy^2-0.125xy+0.25y^2-0.4125y.\\ 
\frac{dz}{dt} & =-z^2-z.\\
\end{align*}

The first relaxation finds the Lyapunov function and derivative shown below:
\begin{lstlisting}
 Lyapunov function :

   4.9295x^2+4.9513y^2+4.9848z^2.  

Lyapunov derivative function :

  -4.9295x^2-4.0848y^2-9.9696z^2-4.9295x^2y-1.2378xy^2-9.859xz^2
  +2.4756y^3-9.9696z^3-19.7179x^4+2.4756xy^3-9.859xz^3.

  \end{lstlisting}
  
 \paragraph{Benchmark \#6:} Consider the three variable polynomial ODE:
\begin{align*}
\frac{dx}{dt} &=-0.5x^3y+0.5x^3z^2-3x^3+y^5-y^4+yz^4-z^4.\\
\frac{dy}{dt} & =0.25y^2-0.25y.\\ 
\frac{dz}{dt} & =yz^4+z^4-2z^3.\\
\end{align*}

The third relaxation finds the Lyapunov function and derivative shown below:
\begin{lstlisting}
 Lyapunov function :

   4.1212x^4-0.0292x^3y+4.9077x^3+3.5749x^2y^2+4.9755x^2z^2  
  +4.9863x^2-1.5913xy^3+1.5914xy^2+4.9939y^4+0.0598y^3z    
  -1.1362y^3+4.9812y^2z^2-0.0597y^2z+4.9950y^2+0.0198yz^3  
  +4.9864z^4+4.9926z^2  
     
 Lyapunov derivative function :

  -2.4975y^2-0.79571xy^2+3.3496y^3+0.029844y^2z-29.9178x^4-1.7881x^2y^2
  +1.9892xy^3-5.8461y^4-0.07469y^3z-2.4885y^2z^2-0.0 049472yz^3-19.9701z^4
  -44.1693x^5-4.9696x^4y+0.041946x^4z-4.7815x^3y^2+0.013348x^3z^2+1.7874x^2y^3
  +0.013982x^2z^3-11.166xy^4-9.9552xz^4+4.994y^5+0.0452y^4+2.4906y^3z^2
  +0.12432y^2z^3-0.033711yz^4+9.9792z^5-49.4547x^6-7.0989x^5y-0.057848x^5z
  -21.4464x^4y^2-24.8666x^4z^2+3.9781x^3y^3-0.011402x^3yz^2-14.7231x^2y^4
  -34.6321x^2z^4+9.9782xy^5+0.013982xy^4z+9.9597xyz^4+0.01905xz^5-1.5914y^6
  -0.11959y^3z^3-21.576y^2z^4+9.8832yz^5-39.8832z^6-8.2424x^6y+0.04377x^5y^2
  +7.3615x^5z^2-3.575x^4y^3-4.9783x^4yz^2-15.6893x^3y^4+0.79344x^3y^2z^2
  -16.4807x^3z^4+14.8106x^2y^5-0.019283x^2y^4+14.8042x^2yz^4+9.9317x^2z^5
  -7.1553xy^6-0.01503xy^5z-9.951xy^4z^2-7.155xy^2z^4-0.0148xyz^5-9.958xz^6
  +3.1827y^7+3.1828y^3z^4+9.9793y^2z^5+0.053588yz^6+19.9477z^7+8.2424x^6z^2
  -0.043771x^5yz^2+3.5749x^4y^2z^2+4.975x^4z^4+16.485x^3y^5-0.79563x^3y^3z^2
  +16.4936x^3yz^4-0.08754x^2y^6+0.019x^2y^5z-0.087017x^2y^2z^4+9.9703x^2yz^5
  +7.1497xy^7+9.951xy^5z^2+7.15xy^3z^4+0.0101xy^2z^5+9.944xyz^6-1.5913y^8
  -1.5315y^4z^4+9.9627y^3z^5+0.063907y^2z^6+19.9431yz^7.
 
  \end{lstlisting}
  \paragraph{Benchmark \#7:} Consider the three variable polynomial ODE:
\begin{align*}
\frac{dx}{dt} &=-0.5x^3y+0.5x^3z^2-x^3+y^4z+y^4-yz^3+yz^2+z^3-z^2\\
\frac{dy}{dt} & = 0.5y^2z-0.5y^2-2y\\ 
\frac{dz}{dt} & =-yz^2+yz+z^2-z\\
\end{align*}

The first relaxation finds the Lyapunov function and derivative shown below:
\begin{lstlisting}
Lyapunov function :

   1.8371x^5+0.1146x^4y+0.1431x^4z+4.9587x^4-2.0557x^3y^2       
  -0.0014x^3yz-0.4698x^3y+3.2944x^3z^2+4.0441x^3-1.2295x^2y^3       
  +4.9584x^2y^2+3.2610x^2yz^2+0.6981x^2z^3+4.9648x^2z^2+4.9858x^2            
  +1.9598xy^4+0.9480xy^3+1.0295xy^2z^2+0.6737xyz^3+2.3539xyz^2  
  -1.1976xz^4+0.2212xz^3-3.3047xz^2-0.3773y^5+0.1262y^4z    
   +4.9884y^4-1.7272y^3z^2-4.5919y^3+0.7677y^2z^3+4.9842y^2z^2  
   +4.9898y^2+1.8746yz^4+0.4655yz^3+0.9830yz^2+0.8032z^5  
   +4.9823z^4-0.6791z^3+4.9962z^2  

Lyapunov derivative function :

  -10.087xz^2+24.0416yz^2+16.1044z^3-9.9716x^4-66.1853x^2z^2
  +15.7876xyz^2+1.3715xz^3-19.9593y^4-71.2314y^2z^2-12.204yz^3
  -48.9619z^4-12.1323x^5-4.986x^4y-0.42935x^4z-77.376x^3z^2
  +50.8098x^2yz^2+15.7832x^2z^3+9.9739xy^4-27.3951xy^2z^2
  -11.3265xyz^3-7.6925xz^4+17.5716y^5-0.37814y^4z+65.4124y^3z^2
   +16.8366y^2z^3-8.169yz^4+4.3258z^5-19.8348x^6-4.6566x^5y
  -9.917x^4y^2-32.0142x^4z^2+45.0923x^3yz^2+24.9503x^3z^3
  -7.7016x^2y^4+3.208x^2y^2z^2-16.9243x^2yz^3-17.7736x^2z^4
  -5.6866xy^5+9.9714xy^4+55.9384xy^3z^2+9.1352xy^2z^3+18.4209xyz^4
  +0.61781xz^5-26.132y^6+9.9786y^5z-32.7928y^4z^2-21.8781y^3z^3
  -9.2688y^2z^4+6.4374yz^5+12.9884z^6-9.1857x^7-10.3757x^6y
  -0.57258x^6z+6.8718x^5y^2-3.8171x^5z^2-2.7287x^4y^3+16.0705x^4yz^2
  +7.6749x^4z^3+26.0935x^3y^4-0.83166x^3y^2z^2-14.3316x^3yz^3
   +0.1192x^3z^4-3.9495x^2y^5+12.1284x^2y^4z-25.0232x^2y^3z^2
  -1.0817x^2y^2z^3+29.6455x^2yz^4+6.6222x^2z^5-8.6057xy^6-3.9191xy^4z^2
  -21.5587xy^3z^3+19.5651xy^2z^4-1.2779xyz^5+0.72252xz^6-15.233y^7
  -14.7866y^6z+8.6278y^5z^2-2.7363y^4z^3-1.1648y^3z^4+5.8781y^2z^5
  -4.6757yz^6-3.0722z^7-4.5929x^7y-0.22917x^6y^2-0.28629x^6yz
  +9.9174x^6z^2+3.0835x^5y^3-5.6463x^5yz^2+10.3006x^4y^4
  +1.6974x^4y^2z^2-9.8837x^4yz^3+4.9648x^4z^4+3.5898x^3y^5
  +19.9386x^3y^4z-0.040754x^3y^3z^2-0.79519x^3y^2z^3+1.2032x^3yz^4
  +0.11062x^3z^5-2.4786x^2y^6+8.502x^2y^5z+6.6209x^2y^4z^2
  +6.167x^2y^3z^3-9.8832x^2yz^5-10.2983xy^7+12.7599xy^6z
  +4.4632xy^5z^2+15.465xy^4z^3-6.5221xy^2z^5-1.3962xyz^6
  +3.8462y^8+20.3974y^7+6.2115y^6z^2+9.5007y^5z^3-2.3862y^4z^4
  -1.0295y^3z^5-0.67367y^2z^6+1.1976yz^7+4.5929x^7z^2
  +0.22917x^6yz^2+0.28629x^6z^3-3.0835x^5y^2z^2+4.9416x^5z^4
  +9.3003x^4y^4z-1.2295x^4y^3z^2+3.261x^4yz^4+0.6981x^4z^5
  -3.653x^3y^5z+1.5511x^3y^4z^2+0.51474x^3y^2z^4+0.33683x^3yz^5
  -0.5988x^3z^6-9.8555x^2y^6z+13.1442x^2y^4z^3+5.3804xy^7z
  +8.581xy^5z^3+2.0699xy^4z^4+0.073499y^8z+0.50561y^7z^2
  -4.152y^6z^3+2.209y^5z^4+0.677y^4z^5.


  
 \end{lstlisting}
  \paragraph{Benchmark \#8:} Consider the three variable polynomial ODE:
\begin{align*}
\frac{dx}{dt} &=-0.5x^3y+0.5x^3z^2-x^3+y^4z+y^4-yz^3+3yz^2+z^3-3z^2\\
\frac{dy}{dt} & =y^4z-y^4-2y^3-z^3+3z^2\\ 
\frac{dz}{dt} & =z^2-3z\\
\end{align*}

The first relaxation finds the Lyapunov function and derivative shown below:
\begin{lstlisting}
Lyapunov function :

    1.8371x^5+0.1146x^4y+0.1431x^4z+4.9587x^4 -2.0557x^3y^2       
    -0.4698x^3y+3.2944x^3z^2+4.0441x^3 -1.2295x^2y^3+4.9584x^2y^2       
   +3.2610x^2yz^2+0.6981x^2z^3+4.9648x^2z^2+4.9858x^2+1.9598xy^4       
   +0.9480xy^3+1.0295xy^2z^2+0.6737xyz^3+2.3539xyz^2 -1.1976xz^4  
    0.2212xz^3 -3.3047xz^2   -0.3773y^5+0.1262y^4z+4.9884y^4       
    -1.7272y^3z^2 -4.5919y^3+0.7677y^2z^3+4.9842y^2z^2+4.9898y^2       
    +1.8746yz^4+0.4655yz^3+0.9830yz^20.8032z^5+4.9823z^4  
   -0.6791z^3+4.9962z^2  
     
Lyapunov derivative function :

   -29.977z^2-10.087xz^2+24.0416yz^2+16.1044z^3-9.9716x^4-66.1853x^2z^2
   +15.7876xyz^2+1.3715xz^3-19.9593y^4-71.2314y^2z^2-12.204yz^3-48.9619z^4
  -12.1323x^5-4.986x^4y-0.42935x^4z-77.376x^3z^2+50.8098x^2yz^2+15.7832x^2z^3
  +9.9739xy^4-27.3951xy^2z^2-11.3265xyz^3-7.6925xz^4+17.5716y^5-0.37814y^4z
   +65.4124y^3z^2+16.8366y^2z^3-8.169yz^4+4.3258z^5-19.8348x^6-4.6566x^5y 
   -9.917x^4y^2-32.0142x^4z^2+45.0923x^3yz^2+24.9503x^3z^3-7.7016x^2y^4
   +3.208x^2y^2z^2-16.9243x^2yz^3-17.7736x^2z^4-5.6866xy^5+9.9714xy^4z
   +55.9384xy^3z^2+9.1352xy^2z^3+18.4209xyz^4+0.61781xz^5-26.132y^6
   +9.9786y^5z-32.7928y^4z^2-21.8781y^3z^3-9.2688y^2z^4+6.4374yz^5
   +12.9884z^6-9.1857x^7-10.3757x^6y-0.57258x^6z+6.8718x^5y^2-3.8171x^5z^2
   -2.7287x^4y^3-0.0013497x^4y^2+16.0705x^4yz^2+7.6749x^4z^3+26.0935x^3y^4
   -0.83166x^3y^2z^2-14.3316x^3yz^3+0.1192x^3z^4-3.9495x^2y^5+12.1284x^2y^4z
   -25.0232x^2y^3z^2-1.0817x^2y^2z^3+29.6455x^2yz^4+6.6222x^2z^5-8.6057xy^6
   -3.9191xy^4z^2-21.5587xy^3z^3+19.5651xy^2z^4-1.2779xyz^5+0.72252xz^6
   -15.233y^7-14.7866y^6z+8.6278y^5z^2-2.7363y^4z^3-1.1648y^3z^4+5.8781y^2z^5
   -4.6757yz^6-3.0722z^7-4.5929x^7y-0.22917x^6y^2-0.28629x^6yz+9.9174x^6z^2
   +3.0835x^5y^3-5.6463x^5yz^2+10.3006x^4y^4+1.6974x^4y^2z^2-9.8837x^4yz^3
   +4.9648x^4z^4+3.5898x^3y^5+19.9386x^3y^4z-0.040754x^3y^3z^2-0.79519x^3y^2z^3
   +1.2032x^3yz^4+0.11062x^3z^5-2.4786x^2y^6+8.502x^2y^5z+6.6209x^2y^4z^2
   +6.167x^2y^3z^3-9.8832x^2yz^5-10.2983xy^7+12.7599xy^6z+4.4632xy^5z^2
   +15.465xy^4z^3-6.5221xy^2z^5-1.3962xyz^6+3.8462y^8+20.3974y^7z+6.2115y^6z^2
   +9.5007y^5z^3-2.3862y^4z^4-1.0295y^3z^5-0.67367y^2z^6
   +1.1976yz^7+4.5929x^7z^2+0.22917x^6yz^2+0.28629x^6z^3-3.0835x^5y^2z^2
   +4.9416x^5z^4+9.3003x^4y^4z-1.2295x^4y^3z^2+3.261x^4yz^4+0.6981x^4z^5-3.653x^3y^5z    
   +1.5511x^3y^4z^2+0.51474x^3y^2z^4+0.33683x^3yz^5-0.5988x^3z^6-9.8555x^2y^6z
   +13.1442x^2y^4z^3+5.3804xy^7z+8.581xy^5z^3+2.0699xy^4z^4+0.073499y^8z
   +0.50561y^7z^2-4.152y^6z^3+2.209y^5z^4+0.677y^4z^5.


  \end{lstlisting} 
   \paragraph{Benchmark \#9:} Consider the three variable polynomial ODE:
\begin{align*}
\frac{dx}{dt} &=  0.05x^2yz+0.05x^2y-0.05x^2z-0.05x^2+0.05xyz+0.05xy-0.05xz-0.05x+0.125y^3z-0.125y^3\\
&+0.125y^2z-0.125y^2+0.2yz^5+0.2yz^4-0.2z^5-0.2z^4; \\
\frac{dy}{dt} & = 0.125y^2z-0.125y^2+0.125yz-0.125y+0.2z^5+0.2z^4\\ 
\frac{dz}{dt} & =-0.1z^2-0.1z\\
\end{align*}

The second relaxation finds the Lyapunov function and derivative shown below:

Lyapunov function :

\[   2.7500x^2+2.7500y^2+5.0000z^2  \]

Lyapunov derivative function :
\begin{lstlisting}
  -0.275x^2-0.6875y^2-z^2-0.275x^3+0.275x^2y-0.275x^2z-0.6875xy^2-0.6875y^3
  +0.6875y^2z-z^3+0.275x^3y-0.275x^3z+0.275x^2yz-0.6875xy^3+0.6875xy^2z
  +0.6875y^3z+0.275x^3yz+0.6875xy^3z-1.1xz^4+1.1yz^4+1.1xyz^4-1.1xz^5+1.1yz^5+1.1xyz^5
\end{lstlisting}

   \paragraph{Benchmark \#10:} Consider the three variable polynomial ODE:
\begin{align*}
\frac{dx}{dt} &=-0.01x+1.666667xz^2y^2-1.111111xz^y+0.555556xz^2-0.555556z^2 \\
&-1.111111zy^3+1.111111zy^2+1.111111y^3-1.111111y^2  \\
\frac{dz}{dt} & =-5zy^2+5zy-7.5z-5y^3+5y^2\\ 
\frac{dy}{dt} & =2y^2-2y\\
\end{align*}

The third relaxation finds the Lyapunov function shown below:
\begin{lstlisting}
Lyapunov function :

  1.5308x^2+4.9266z^2+4.9819y^2  
   
Lyapunov derivative function :

  -0.030616x^2-73.8988z^2-19.9274y^2-1.7009xz^2-3.4017xy^2+49.2659z^2y
  +49.2659zy^2+19.9274y^3+1.7009x^2z^2+3.4017xzy^2+3.4017xy^3-49.2659z^2y^2-
  +49.2659zy^3-3.4017x^2z^2y-3.4017xzy^3+5.1026x^2z^2y^2
  \end{lstlisting}

 \paragraph{Benchmark \#11:} Consider the four variable polynomial ODE:
\begin{align*}
\frac{dx}{dt} &=-18xyw-13xy-18xw-37.5x-16z^3+4z^2y-31.5z^2w-6.5z^2+32zyw+48zy-16zw^2-36zw\\
&+8y^3+36y^2w+28y^2+68yw+16y-14w^2   \\
\frac{dz}{dt} & =-16z^2+24zy-31.5zw-27.5z-32y^2+32yw+16y-16w^2-28w\\ 
\frac{dy}{dt} & =-36y^2w-52y^2-36yw-112y+64w\\
\frac{dw}{dt} & =-4w.\\ 
\end{align*}
The first relaxation finds the Lyapunov function and derivative shown below:
\begin{lstlisting}
Lyapunov function :
    1.6209y^2 +1.3650 yw + 4.8875w^2     
Lyapunov derivative function :

   -363.0877y^2-303.641w^2-168.5765y^3-187.6862y^2w
    -49.1398yw^2-116.7068y^3w-49.1397y^2w^2

 \end{lstlisting} 
 \paragraph{Benchmark \#12:} Consider the four variable polynomial ODE:
\begin{align*}
\frac{dx}{dt} &= 28x^3-28x^2z-28x^2y+0.5x^2w+9.5x^2+3xz^2+28xzy-xzw+21xz+14xy^2+2xyw-1.5x+10.5xw-60.5x-6z^2y \\
&-15.5z^2w+19.5z^2-22.5zy^2-2zyw-18zy+9zw+9z+12.5y^3-8y^2w+8y^2+1yw^2-8yw+41y+12.5w^2+6w    \\
\frac{dz}{dt} & = 2z^3+4z^2y+8.5z^2w+4.5z^2+4zy^2+5.75zyw-7.25zy+8.5zw^2-11zw-42.5z+9y^2w+17.75y^2+22.5yw^2\\
&+12.5yw-23y+2.25w^3+11.25w^2-7w\\ 
\frac{dy}{dt} & =-21y^2-12yw-129y-45w^3-101w^2-62w\\
\frac{dw}{dt} & =-13.5w^2-27w.\\ 
\end{align*}
The second relaxation finds the weak Lyapunov function shown below:
\begin{lstlisting}

Lyapunov function :
 
  1.5759y^2-1.2527yw+5.0000w^2  
 
Lyapunov derivative function :

  -406.592y^2-192.3343w^2-66.1895y^3-11.5165y^2w
  -286.3966yw^2-8.4804w^3-141.8345yw^3+56.3701w^4

\end{lstlisting}

 \paragraph{Benchmark \#13:} Consider the four variable polynomial ODE:
 \begin{align*}
\frac{dx}{dt} &=-1.510417x^5+8x^4yw+8.5x^4y-8x^4w-12.208333x^4-12x^3zyw\\&-9.75x^3zy-6x^3zw+2x^3y^2w+22x^3y^2+4x^3yw+6.5x^3y+2.5x^3w^2-47x^3w-60.875x^3\\ &-8x^2z^3w+2x^2z^2yw-16.875x^2z^2y+8x^2z^2w^2-13x^2z^2w-8x^2zy^2w \\&-7.5x^2zy^2+2x^2zyw^2+37x^2zyw-3.75x^2zy-4x^2zw^3-14.75x^2zw^2-46.5x^2zw-8x^2y^3w\\ &-7.5x^2y^3+4x^2y^2w+1x^2y^2+16x^2yw^3+6.5x^2yw^2-2x^2yw+2x^2y-12x^2w^3\\&+6.5x^2w^2-7x^2w+11.75x^2-4xz^4w-7xz^3yw-6.4375xz^3y+16xz^3w^2+25.5xz^3w\\ 
&+4xz^2y^2w+12.25xz^2y^2-2xz^2yw^2+26.5xz^2yw+1.125xz^2y-1xz^2w^3 \\&-60.875xz^2w^2-47.75xz^2w+44xzy^3w+54.25xzy^3-24xzy^2w^2-83xzy^2w-55.5xzy^2\\ 
&+49.25xzyw^2+29xzyw-13xzy+42xzw^3-20.75xzw^2-32.5xzw-1.5xy^4+16xy^3w^2\\&+9xy^3w-0.5xy^3-29.5xy^2w^2-43xy^2w-45.5xy^2+16xyw^3+30.5xyw^2+15xyw+24xy\\ &-6xw^3-64.5xw^2+58.5xw-41.833333x-4z^5w+6.5z^4yw-12.71875z^4y+12z^4w^2 \\&-7.25z^4w-6z^3y^2w-8.375z^3y^2-9z^3yw^2-15.75z^3yw-22.4375z^3y-9z^3w^3\\ 
&-50.4375z^3w^2-69.875z^3w+14z^2y^3w+11.625z^2y^3-2z^2y^2w^2-56.5z^2y^2w\\&-34.75z^2y^2+54.625z^2yw^2-33.5z^2yw+11z^2y+61z^2w^3+14.625z^2w^2-0.25z^2w\\
&-8zy^4w-20.75zy^4+8zy^3w^2+18.5zy^3w+22.75zy^3-31.75zy^2w^2-50.5zy^2w\\&-17.75zy^2-12zyw^3+1.25zyw^2+71.5zyw+8zy-1zw^4+33zw^3-143.25zw^2-1.75zw\\
 &+16y^5w+18y^5-16y^4w^2-56y^4w-46y^4+5y^3w^2+4y^3w+22y^3+8y^2w^4\\&+1y^2w^3-49y^2w^2-137y^2w+19y^2+2yw^5-25.5yw^4-9yw^3+31.5yw^2-55yw+12y\\
 &-2w^5-23.5w^4-11w^3+31.5w^2-11w \\
\frac{dz}{dt} & =-3.020833x^5+15.46875x^4z+10.583333x^4-21.0625x^3z^2-18.625x^3z\\&-31.75x^3+18.875x^2z^3+9.75x^2z^2-51.625x^2z+8.5x^2-10.25xz^4-7.5xz^3\\
&+12.75xz^2+14.25xz-40.666667x+3.5z^5-8z^4-51.5z^3-5.5z^2-41.5z\\ 
\frac{dy}{dt} & =3.25z^5w-6.359375z^5+13z^4yw+15.3125z^4y-4.5z^4w^2+0.125z^4w\\&-44.71875z^4-9z^3y^2w-10.1875z^3y^2-1z^3yw^2-20.25z^3yw-20.875z^3y\\ 
&+42.3125z^3w^2-12.75z^3w-3.5z^3-6.375z^2y^3+4z^2y^2w^2-6.75z^2y^2w\\&-17.125z^2y^2+27.625z^2yw^2-9.25z^2yw-72.375z^2y-6z^2w^3-19.375z^2w^2-23.25z^2w\\
&-3.5z^2+8zy^4w+9zy^4-8zy^3w^2-2zy^3w-23zy^3+9zy^2w^2-42zy^2w+38zy^2\\&+4zyw^4-11zyw^3-57.5zyw^2-164.5zyw+45.5zy+zw^5-12.75zw^4-44.5zw^3+74.75zw^2\\
&-23.5zw-22z+4y^5w+4y^5+8y^4w+6y^4-16y^3w^2-16y^3w-85y^3+8y^2w^3\\&+3y^2w^2+11y^2w-1y^2+8yw^4+6yw^3-52.5yw^2+17yw-25.5y-8w^5-16.5w^4-3w^3-14.5w^2+9w \\
\frac{dw}{dt} & =-2z^6+6z^5w+5.375z^5-4.5z^4w^2-22.21875z^4w-74.9375z^4\\&+34.5z^3w^2+22.3125z^3w-15.125z^3-5z^2w^3+33.5z^2w^2-128.125z^2w \\
&-40.875z^2-zw^4-21.75zw^3+13.5zw^2+13.75zw+4.5z-12w^4-22w^3-12.5w^2-4w \\ 
\end{align*}

 \paragraph{Benchmark \#14:} Consider the four variable polynomial ODE:
\begin{align*}
\frac{dx}{dt} &=7.145833x^5-20x^4y-2.416667x^4-10x^3zy+16x^3zw+20x^3y^2-18x^3yw-28x^3y\\&-10x^3w^2-12x^3w-77.541667x^3+3.5x^2z^2yw-25x^2z^2y+2.5x^2z^2w\\
&-20x^2zy^2w-30x^2zy^2+12x^2zyw^2-9x^2zyw+11x^2zy-12x^2zw^3-21x^2zw^2+15x^2zw\\&+28x^2y^3w+28x^2y^3+28x^2y^2w^2+26x^2y^2w+18x^2y^2+14x^2yw^3+2x^2yw^2\\
&-40x^2yw-7x^2y-2x^2w^3-8x^2w^2-17x^2w-42x^2+13.75xz^3yw+5.5xz^3y-24xz^3w^2\\&-2.75xz^3w+4xz^2y^2w+9xz^2y^2+32xz^2yw^2-2xz^2yw+9.5xz^2y-6xz^2w^3\\ 
&-10.5xz^2w^2+31.5xz^2w+2xzy^3w+19xzy^3-24xzy^2w^2-12xzy^2w+2xzy^2+31xzyw^3\\&+43xzyw^2+15xzyw-23.5xzy-13xzw^3-14xzw^2-70.5xzw+2xy^4+28xy^3w^2\\
&+7xy^3w+45xy^3+38xy^2w^3+83xy^2w^2-16.5xy^2w-142xy^2+28xyw^4-12xyw^3 \\&-76xyw^2-34xyw+25xy-23xw^4-38xw^3-64.5xw^2+48xw-92x-12z^5w+10.375z^4yw\\
&+4.25z^4y-12z^4w^2+3.125z^4w-1z^3y^2w+0.5z^3y^2+4z^3yw^2+16z^3yw+23.75z^3y\\&-15z^3w^3-30.25z^3w^2+25.75z^3w+1z^2y^3w+39.5z^2y^3-32z^2y^2w^2\\
&+16z^2y^2w+63z^2y^2+29.5z^2yw^3-40.5z^2yw+47.75z^2y-36.5z^2w^3-27.5z^2w^2\\&-88.25z^2w-3zy^4w+7zy^4+26zy^3w^2+48zy^3w+26.5zy^3+7zy^2w^3+35.5zy^2w^2\\ &+26.75zy^2w-51zy^2+14zyw^4-2zyw^3-5zyw^2-62zyw-50.5zy-19.5zw^4-29zw^3 \\&-53.25zw^2-37zw+2.5y^5-2y^4w-8.5y^4+24y^3w^3+72.5y^3w^2+88y^3w+3y^3+18y^2w^3\\
&-94.5y^2w^2-126y^2w-58y^2+14yw^5+5yw^4+10yw^3-46.5yw^2+84yw+12y\\&-14w^5-42w^4+16w^3+29.5w^2-13w\\
\frac{dz}{dt} & = 4.291667x^5+5.4375x^4z-20.833333x^4-11.125x^3z^2+12.75x^3z-102.083333x^3-10.25x^2z^3-0.5x^2z^2-2.625x^2z\\&-88x^2-24.5xz^4-21xz^3-51.25xz^2-70xz-57x+5z^5+7z^4-112.5z^3-31z^2-90z\\
\frac{dy}{dt} & =5.1875z^5w-7.875z^5+5z^4yw+10.25z^4y+20z^4w^2-4.5z^4w-2.625z^4\\&+5z^3y^2w+9.75z^3y^2-16z^3yw^2-12z^3yw+9.5z^3y+14.75z^3w^3-56z^3w^2\\ &-85.25z^3w-33.125z^3+z^2y^3w+31z^2y^3+3z^2y^2w^2+8z^2y^2w+7.25z^2y^2\\&+3.5z^2yw^3-14.25z^2yw^2-34.625z^2yw-74z^2y+7z^2w^4+4z^2w^3+24.5z^2w^2\\ &-20z^2w-35.75z^2+10zy^4w+11.25zy^4-10zy^3w^2+22zy^3w+28.75zy^3+12zy^2w^3\\&+4.25zy^2w^2+44zy^2w-10.5zy^2+37zyw^3+24.75zyw^2-83zyw-58zy+7zw^5\\ &+41zw^4+30zw^3+47.75zw^2-12zw-18z+10y^5w+10y^5-20y^4w^2+17.5y^4\\&+12.5y^3w^3-7.5y^3w^2+8y^3w-100y^3+10y^2w^3+103.5y^2w^2+14y^2w-4y^2-42.5yw^3\\
&-70.5yw^2-15yw-36.5y+10w^5-8w^4+20w^3+8.5w^2-12w\\ 
\frac{dw}{dt} & =-6z^6-6z^5w+13.5625z^5-7.5z^4w^2-15.125z^4w+3.375z^4-6.25z^3w^2\\&-22.75z^3w+9.875z^3-9.75z^2w^3-38.5z^2w^2-74.125z^2w-91z^2-7zw^4\\&-13zw^3+8zw^2+21.75zw+30.5z-13.5w^4-39.5w^3-7w^2-43w\\ 
\end{align*}

\paragraph{Benchmark \#15:} Consider the four variable polynomial ODE:
 \begin{align*}
\frac{dx}{dt} &= 8x^5+84x^4zw-2x^4z-204.5x^4yw-4x^4w^2+42x^4w+29x^4+28x^3z^2w+18.375x^3z^2-53.25x^3zyw\\&+13x^3zw^2-25.5x^3zw+8.75x^3z-19x^3w^2-56.5x^3w-70x^3-1.187500x^2z^3+1.375x^2z^2yw\\&-7.5x^2z^2w^2+43.25x^2z^2w+30.25x^2z^2-44.75x^2zy^2w+15x^2zyw^2-244.5x^2zyw-28x^2zw^3-177x^2zw^2\\&+300x^2zw-48.5x^2z+31.5x^2y^3w-46x^2y^2w^2+374.5x^2y^2w-12x^2yw^3+275x^2yw^2-596.5x^2yw\\&+2x^2w^4+112x^2w^3+88.5x^2w^2+128x^2w-41.5x^2+9.656250xz^4+0.6875xz^3yw-3.75xz^3w^2\\&+74.625xz^3w+28.562500xz^3-15.375xz^2y^2w+7.5xz^2yw^2-159.25xz^2yw-26xz^2w^2+126xz^2w\\&-93.75xz^2+29.75xzy^3w-25xzy^2w^2+210.25xzy^2w+36xzyw^3+169.5xzyw^2-420.25xzyw+18xzw^4-57xzw^3\\&-52.25xzw^2-266xzw+17.25xz-11xw^4-108xw^3-135xw^2+42xw-76x+14.015625z^5+0.343750z^4yw-1.875z^4w^2\\&+31.312500z^4w+33.734375z^4-7.687500z^3y^2w+3.75z^3yw^2-62.625z^3yw+33z^3w-142.468750z^3+16.875z^2y^3w\\&-16.5z^2y^2w^2+109.125z^2y^2w+22z^2yw^3+96.25z^2yw^2-204.625z^2yw+9z^2w^4-25.5z^2w^3\\&-37.625z^2w^2-147.5z^2w-2.25z^2-117.75zy^4w+117zy^3w^2-73.25zy^3w-72zy^2w^3 \\&-264.5zy^2w^2+538.25zy^2w-18zyw^4-24zyw^3+97.25zyw^2+188zyw-28zw^4+95zw^3\\&+104zw^2-51zw-164.875000z+246y^5w-238y^4w^2+120y^4w+173y^3w^3+\\&564y^3w^2-1049.5y^3w+40y^2w^4+65y^2w^3-134.5y^2w^2-440.5y^2w-34.5yw^5 \\&-37.5yw^4-285yw^3-248yw^2+243yw+17w^5+40w^4+13w^3+23w^2-42w\\
\frac{dz}{dt} & =-7x^4yw-8x^4w^2+85x^4w-16x^2y^2w^2-71x^2y^2w-28x^2yw^3-66x^2yw^2+70x^2yw+4x^2w^4+140x^2w^3\\&+84x^2w^2+278x^2w+15.468750z^5+\\&18.093750z^4-130.937500z^3+8z^2-183.75z\\ 
\frac{dy}{dt} & = 203x^5w+8x^5+30.75x^4z+3.5x^4y-59.5x^4+11.25x^3z^2-39.5x^3zy-38.25x^3z-33.5x^3y^2w+4x^3y^2\\&+38x^3yw^2-331x^3yw+59.5x^3y-268x^3w^2+722.5x^3w+38x^3+44.5x^2z^3+21x^2z^2y-1.125x^2z^2\\&+7x^2zy^2+19.75x^2zy-59x^2z+14.5x^2y^2-52x^2y-57x^2+3.90625xz^4+8.812500xz^3y+14.8125 \\&xz^3+26.625xz^2y^2+35.625xz^2y-0.25xz^2+4.25xzy^3+17.25xzy^2-2.5xzy-65.75xz\\&-243.5xy^4w+2.5xy^4+238xy^3w^2-140.5xy^3w-21.5xy^3-144xy^2w^3 \\&-549xy^2w^2+1021.5xy^2w-6xy^2-40xyw^4-48xyw^3+176.5xyw^2+438xyw-37.5xy\\&+35xw^5+36xw^4+296xw^3+315xw^2-212xw-68x+9.921875z^5-0.343750z^4yw+\\&32.375z^4y+1.875000z^4w^2-3.312500z^4w-10.546875z^4+15.75z^3y^2-4.1875z^3y\\&-96.40625z^3-44.875000z^2y^3w+31.5z^2y^3+44.5z^2y^2w^2-53.125000 \\&z^2y^2w+2.625z^2y^2-22z^2yw^3-152.25z^2yw^2+226.625z^2yw-45.875000z^2y-9z^2w^4\\&-30.5z^2w^3+39.625z^2w^2+75.5z^2w+24.25z^2-4.75zy^3-8.75zy^2-21.5 \\&zy-119.625z-246y^5w+182y^4w^2-184y^4w-15.5y^4-117y^3w^3-548y^3w^2+1027.5y^3w\\&+6.5y^3-40y^2w^4-93y^2w^3+208.5y^2w^2+527.5y^2w-16y^2+34.5yw^5+\\&105.5yw^4+299yw^3+336yw^2-169yw-93.5y-52w^5+w^4+165w^3-73w^2+74w\\ 
\frac{dw}{dt} & = 246y^6-182y^5w+168.5y^5+145y^4w^2+548y^4w-1026y^4+40y^3w^3\\&+116y^3w^2-191.5y^3w-523.5y^3-34.5y^2w^4-105.5y^2w^3-324.5y^2w^2 \\&-366y^2w+125y^2+34.5yw^4-18.5yw^3-191.5yw^2+28.5yw-128y-29w^4-74w^3+2.5w^2-41w\\
\end{align*}

\fi
\end{document}
%%%%%%%%%%%%%%% this is title page style%%%%%%%%%